\newcommand{\bbK}{\ensuremath{\mathbb{K}}}  % velocity set
\newcommand{\bbS}{\ensuremath{\mathbb{S}}}  % sphere
\newcommand{\bbX}{\ensuremath{\mathbb{X}}}  % spatial domain
\newcommand{\bbU}{\ensuremath{\mathbb{U}}}  % subset domain
\newcommand{\bbD}{\ensuremath{\mathbb{D}}}  % full domain
\newcommand{\bbT}{\ensuremath{\mathbb{T}}}  % trace domain
\newcommand{\RR}{\ensuremath{\mathbb{R}}}   % real numbers
\newcommand{\CC}{\ensuremath{\mathbb{C}}}   % complex numbers
\newcommand{\HM}{\ensuremath{\mathbb{H}}}   % Hermitian 2x2 matrices.
\newcommand{\MW}{\ensuremath{\mathcal{W}}}  % coherence matrix
\newcommand{\MU}{\ensuremath{\mathcal{U}}}  % General matrix
\newcommand{\MV}{\ensuremath{\mathcal{V}}}  % General matrix
\newcommand{\MS}{\ensuremath{\mathcal{S}}}  % Invertible matrix
\newcommand{\MF}{\ensuremath{\mathcal{F}}}  % Source matrix interior
\newcommand{\MG}{\ensuremath{\mathcal{G}}}  % Source matrix boundary
\newcommand{\MA}{\ensuremath{\mathcal{A}}}  % Matrix generator
\newcommand{\ME}{\ensuremath{\mathcal{E}}}  % Extension operator
\newcommand{\ddd}[1]{\,{\rm d}#1}  % generic integration
\newcommand{\dx}{\ddd{x}}  % x-integration
\newcommand{\dk}{\ddd{k}}  % x-integration
\newcommand{\dt}{\ddd{t}}  % t-integration
\DeclareMathOperator{\divergence}{div}
\DeclareMathOperator{\supp}{supp}
\DeclareMathOperator{\Tr}{Tr}
\newtheorem{remark}[theorem]{Remark}
\title{On the unique solvability of radiative transfer equations with polarization}
\author{Vincent Bosboom%
\thanks{Department of Applied Mathematics, University of Twente,
P.O. Box 217, 7500 AE Enschede, The Netherlands.
\email{v.bosboom@utwente.nl}}
\and Matthias Schlottbom% 
\thanks{Department of Applied Mathematics, University of Twente,
P.O. Box 217, 7500 AE Enschede, The Netherlands.
\email{m.schlottbom@utwente.nl}}
\and Felix L.~Schwenninger%
\thanks{Department of Applied Mathematics, University of Twente,
P.O. Box 217, 7500 AE Enschede, The Netherlands and Department of Mathematics,
University of Hamburg, Germany. 
\email{f.l.schwenninger@utwente.nl}}
}
\date{\today}
\begin{document}

\maketitle

\begin{abstract} 
We investigate the well-posedness of the radiative transfer equation with polarization and varying refractive index. 
% For this purpose, we first consider the radiative transfer equation without polarization and scattering and show that it has a unique solution given by a strongly continuous semigroup of contractions. Afterwards, we show well-posedness of the full radiative transfer equation by treating the remaining terms as bounded perturbations of this semigroup.
The well-posedness analysis includes non-homogeneous boundary value problems on bounded spatial domains, which requires the analysis of suitable trace spaces.
Additionally, we discuss positivity, Hermiticity, and norm-preservation of the matrix-valued solution.
As auxiliary results, we derive new trace inequalities for products of matrices.
\end{abstract}
% journals (all open access via UT):
% \href{https://www.sciencedirect.com/journal/journal-of-differential-equations}{Journal of Differential Equations}

% \href{https://www.springer.com/journal/42985}{SN Partial Differential Equations and Applications}

% \href{https://www.journals.elsevier.com/partial-differential-equations-in-applied-mathematics}{Partial Differential Equations in Applied Mathematics?}

% European Journal of Applied Mathematics https://www.cambridge.org/core/journals/european-journal-of-applied-mathematics

\begin{keywords}
Radiative transfer, polarization, refractive index, well-posedness, semigroup theory
\end{keywords}

\begin{AMS}
35A01,      %Existence problems for PDEs: global existence, local existence, non-existence
35B09,      %Qualitative properties of solutions to PDEs: Positive solutions to PDEs
% 35F46,      % Initial-boundary value problems for systems of linear first-order PDEs
35L50,      %Initial-boundary value problems for first-order hyperbolic systems
46E35,      %Sobolev spaces and other spaces of “smooth” functions, embedding theorems, trace theorems 
47A63       %Linear operator inequalities
\end{AMS}

% referees
% G. Bal https://www.stat.uchicago.edu/~guillaumebal/publications.html
% Plamen Stefanov https://www.math.purdue.edu/people/bio/stefanop/
% A. Jollivet http://math.univ-lille1.fr/~jollivet/
% Marius Tucsnak https://scholar.google.fr/citations?user=ui2iSo0AAAAJ&hl=fr

\section{Introduction}
In this paper, we study the well-posedness of the radiative transfer equation (RTE) describing the evolution of polarized radiation:
\begin{align}\label{eq:rte_W}
    \frac{\partial \MW}{\partial t} + \nabla_kH\cdot\nabla_x \MW - \nabla_x H\cdot\nabla_k \MW + \Sigma \MW=N(\MW) + S(\MW) + \MF.
\end{align}
This equation was first derived using phenomenological arguments by Chandrasekhar \cite{Chandrasekhar1960}, but has also been derived from the high-frequency limit of Maxwell's equations \cite{Papanicolaou1975,Ryzhik:1996,Gerard:1997}.
%Bal:06,Watson1969,Papanicolaou1975,Bal:2008,Mishchenko:2010,
It is used in many applications, such as in medical imaging \cite{ArridgeSchotland:2009}, neutron transport \cite{Case1967},
atmospheric science \cite{Hansen_1974}, oceanography \cite{Arnush_1972}, pharmaceutical powders \cite{Burger:97} or solid state lightning \cite{Melikov_2018}, see also \cite{Bal:2009,Carminati_2021}.

In \eqref{eq:rte_W}, the coherence matrix $\MW=\MW(x,k,t)$, with $x\in \bbX\subseteq\RR^3$, $k\in \bbK=\RR^3\backslash\{0\}$ and $t\geq 0$, is a Hermitian $2\times2$ matrix-valued function and the quantity $H$ refers to a dispersion relation given by $H(x,k)=\nu(x)|k|$, with the velocity function  $\nu(x)$. The coupling operator $N$ is defined as
\begin{align}
    \label{eq:Coupling}
    N(\MW) \colonequals n(J\MW-\MW J),
\end{align}
where the function $n=n(x,k)$ is a scalar-valued coupling function related to the rate of change in polarization, and the symplectic matrix $J$ is defined as
\[ J \colonequals \begin{pmatrix}0&1\\-1&0\end{pmatrix}.\]
The scattering rate, which is associated to scattering by random perturbations in the background medium, is described by the function $\Sigma=\Sigma(x,k)$, while a redistribution of propagation direction is described by the integral operator
\begin{align}
    \label{eq:Scattering}
    (S\MW)(x,k,t) \colonequals \int_{\bbS_{|k|}} \sigma(x,k\cdot k') T(k,k') \MW(x,k',t)T(k,k')^* \,d\lambda(k').
\end{align}
Here, $\bbS_{|k|}$ is the sphere in $\RR^3$ of radius $|k|$, $\sigma\colon \bbX\times\RR\to \RR$ is a positive function and $T(k,k')$ is a real $2\times2$ matrix-valued function satisfying $T(k,k')=T(k',k)^*$, with the $^*$ operator denoting the conjugate transpose. Furthermore, the $2\times 2$ matrix-valued function $\MF=\MF(x,k,t)$ denotes an internal source term. For more details on the parameters $T$ and $n$ we refer the reader to \cite{Ryzhik:1996}.

In this manuscript we prove well-posedness
%The aim of this manuscript is to prove the unique solvability 
of the radiative transfer equation with polarization. Moreover, we study additional properties of the matrix-valued solution, such as positivity and dissipation of energy. To do so, we first show that without coupling and scattering the radiative transfer equation can be written in terms of the Liouville equation for the real-valued Stokes parameters $I$, $Q$, $U$, and $V$ \cite{Born:1999}, which are related to the coherence matrix by
\begin{align}\label{eq:Stokes}
\MW=\frac{1}{2}\begin{pmatrix}I + Q & U + i V\\ U - i V & I-Q \end{pmatrix}.
\end{align}
% The parameter $I$ describes the total intensity of the radiation, $Q$ a difference of intensities, and $U$ and $V$ coherence intensities. See \cite{Born:1999} for more information about the physical interpretation of the Stokes parameters. 
%
% The well-posedness of the general Liouville equation on $\RR^{2n}$ has been investigated before in $L^1$ \cite{Petrina_1990} and $L^2$ \cite{Jiang:1997}, but to the authors' best knowledge, well-posedness of the Liouville equation in $L^p$ on arbitrary measurable domains has not been established. 
We show that under mild assumptions the Liouville equation for our Hamiltonian $H$ is well-posed in $L^p$, $p\in[1,\infty)$, on general domains. The cases $p=1$ and $p=2$ have been investigated before on $\RR^{2n}$, see  \cite{Petrina_1990} and  \cite{Jiang:1997}. 
% \todo{ What is with $p=\infty$? Can we show a maximum principle, similar to our positivity principle? Our flow does not change the maximum, i.e., the largest singular values are preserved ...Semigroup not strongly cont.}
Secondly, we prove that the coupling and scattering terms define bounded perturbations in the semigroup formalism, leading to the existence and uniqueness of a solution to the full radiative transfer equation \cref{eq:rte_W} in \cref{thrm: ExistenceFull}. 
This analysis requires a careful choice of the norm for the matrix-valued function $\MW$. By using Schatten norms \cite{Bhatia1997} and proving inequalities for the traces of matrices, which seem to be novel and of interest in their own right, we can show that the term $S(\MW)-\Sigma\MW$ is not only related to a bounded, but also to a dissipative operator. Additionally, we show that the term $N(\MW)$ does not change the norm of $\MW$ along the flow. The combination of the latter statements is then used to show that the norm of $\MW$ is decaying in time.
Using the mapping properties of the operators associated to \cref{eq:rte_W} and Trotter's formula, we are able to prove that $\MW(x,k,t)$ remains positive matrix provided the data terms are positive.
% More precisely, we construct a norm in which the operators $\Sigma,S$ and $N$ are bounded and, in particular, derive new inequalities for traces of operators to show boundedness of $S$.
% {\color{red} A little more advertisement, please.}
In addition, we also investigate the case of a bounded spatial domain and non-homogeneous boundary conditions.

On a very simplified level, our approach can be seen as similar to the program outlined in  Dautray \& Lions, see \cite[Chapter XXI]{DL6}, where  the constant velocity case $\nu(x)={\rm const}$ is treated. 
However, since $H$ is spatially dependent and due to the fact that Equation \cref{eq:rte_W} is matrix-valued, one encounters significant challenges when trying to apply known techniques.
For instance, the scattering operator $S$ requires a more refined a study due to the interplay of Schatten and $L^{p}$ norms. 
Furthermore, we will prove new trace theorems for the case of variable velocity $\nu(x)$, 
enabling us to employ the usual homogenization of boundary conditions argument to verify well-posedness of the radiative transfer equation on bounded domains. To show that this solution is positive pointwise in $(x,k,t)$, we combine Trotter's formula with a positivity result for inhomogeneous boundary conditions. The latter is proven by considering the evolution equations satisfied by the trace and the determinant of $\MW$.

The outline of this paper is as follows: After introducing relevant functions spaces and a list of standing assumptions in \Cref{section: Preliminaries},  we rewrite the radiative transfer equation without the coupling, scattering and source terms as a system Liouville equations for the Stokes parameters in \Cref{section: LiouvilleUnbounded}. The equations on the full space are solved and related to the theory of semigroups by standard arguments. In \Cref{section: CoupleScatter} we show the well-posedness of the full radiative transfer equation with polarization by treating the coupling and scattering terms as bounded perturbations of the semigroup derived in the previous section. We also state and prove novel inequalities for traces of matrices, which might be interesting on their own.
In \Cref{section: bounded-domain} we extend our for the Liouville equation to the case of a bounded spatial domain with non-homogeneous inflow boundary conditions by defining a suitable trace space and lifting operator. The analysis of \cref{eq:rte_W} on bounded domains is done in \Cref{sec:existence_full_rte_bounded}. \Cref{section: Additional} contains statements and proofs regarding positivity of the solution and dissipation of energy. We end with a conclusion and discussion of our work in \Cref{section: conclusion}. 

\section{Preliminaries}
\label{section: Preliminaries}
% \subsection{Domain} In this paper  the spatial domain is denoted by $\bbX\subset\RR^{3}$ and the wave-vector domain by $\bbK=\RR^{3}\setminus \{0\}$. The combined variable space $\bbX\times \bbK$ is denoted by $\bbD$ in the full (spatial) domain case $\bbX=\RR^{3}$ and by $\bbD_{b}$ if $\bbX=\bbX_{b}$ is bounded \myworries{and open?}. %consider two different cases; the full domain case $\bbX=\RR^{3}$ and the bounded \myworries{open?} \cref{eq:rte_W} either on the full six-dimensional unbounded domain
% %\begin{align*}
%  %   \bbD = \left\{(x,k)\in \bbX\times\bbK : \bbX = \RR^3 \text{ and }\bbK = \RR^3\backslash\{0\}\right\},
% %\end{align*}
% %or on a domain bounded in the spatial component
% %\begin{align*}
%  %   \bbD_b = \left\{(x,k)\in \bbX_b\times\bbK : \bbX_b \subset \RR^3 \text{ and }\bbK = \RR^3\backslash\{0\}\right\},
% %\end{align*}
% %with $\bbX_b$ bounded. 
% The point $k=0$ is excluded from the domain to avoid singularities in $\nabla_kH(x,k)$.

% Let $\partial \bbX_b$ denote the boundary of $\bbX_b$. Since $\bbK = \RR^3\backslash\{0\}$ the boundary of $\bbD_b$ is given by $\partial\bbD_b = \partial \bbX_b\times\bbK$. Furthermore, we denote by $\vec{n}$ the unit outward normal on $\bbD_b$. On the bounded domain $\bbD_b$ we then define the in-, and outflow boundary as
% \begin{align}
%     \label{eq:InflowBoundary}
%     \Gamma_\pm = \{(x,k)\in \partial\bbD_b: \pm\nabla_kH(x,k)\cdot\vec{n}(x)>0\}.
% \end{align}
\subsection{Function spaces}
We denote by $|\MU| \colonequals \left(\MU^*\MU\right)^\frac{1}{2}$ the modulus of a complex matrix $\MU\in\CC^{n\times n}$. The trace of $\MU$ is denoted by $\Tr(\MU)$, and we write $\MU\succeq 0$ if $\MU$ is Hermitian and positive. Furthermore, denote by $\HM$ the space of $2\times 2$ Hermitian matrices. 
We equip $\HM$ with the Schatten $p$-norm \cite{Bhatia1997}, which, for $\MU\in \HM$ and $1\leq p<\infty$, is defined as
\begin{align}
    \|\MU\|_{S^p} \colonequals \Tr\left[|\MU|^p\right]^{\frac{1}{p}} = \left(\sum_{i=1}^2\sigma_i^p(\MU)\right)^\frac{1}{p},
\end{align}
with $\sigma_i(\MU)$ denoting the singular values of $\MU$ with $\sigma_1(\MU)\geq \sigma_2(\MU)$. For $p=\infty$, we set
\begin{align}
    \|\MU\|_{S^\infty} \colonequals \sigma_1(\MU).
\end{align}
For a domain $\bbD\subset\RR^n$, $1\leq p <\infty$ and a (finite-dimensional) complex normed space $V$, let $L^p(\bbD,V)$ denote 
the common Lebesgue-Bochner spaces of $p$-integrable functions from $\bbD$ to $V$. 
The space  $L^p(\bbD,\HM)$ is thus equipped with the norm
\begin{align}
    \|\MU\|_{L^p(\bbD,\HM)} \colonequals \left(\int_{\bbD}\|\MU\|_{S^p}^p\dx\dk\right)^\frac{1}{p}.
    % =\left(\int_{\bbD}\Tr\left[|\MU|^p\right]\dx\dk\right)^{\frac{1}{p}}.
\end{align}
%the space of equivalence classes of measurable functions on $\bbD$ for which the $p$-th power of the absolute value is Lebesgue integrable. 
Similarly, $L^\infty(\bbD,V)$ refers to the Lebesgue space of essentially bounded functions.
%For $1\leq p<\infty$, we endow the space $L^p(\bbD,\HM)$ of measurable functions $\MU\colon\bbD\to \HM$ with the norm
The norm $\|\MU\|_{L^\infty(\bbD,\HM)}$ is defined accordingly.
We write $\MU\succeq 0$ for $\MU\in L^p(\bbD,\HM)$, if $\MU(x,k)\succeq 0$ for a.e. $(x,k)\in\bbD$. Any $\MU\in L^p(\bbD,\HM)$ can be written in terms of the Stokes parameters as
\begin{align*}
    \MU=\frac{1}{2}\begin{pmatrix}I + Q & U + i V\\ U - i V & I-Q \end{pmatrix}.
\end{align*}
A straightforward calculation of the singular values of $\MU$ shows that the Schatten norm of $\MU$ can then be expressed as
\begin{align}
    \|\MU\|_{S^p}^p &= \Big(\frac{1}{4}\big[I^2+Q^2+U^2+V^2]+\frac{1}{2}I\big[Q^2+U^2+V^2]^{1/2}\Big)^{p/2} \notag\\
    &+\Big(\frac{1}{4}\big[I^2+Q^2+U^2+V^2]-\frac{1}{2}I\big[Q^2+U^2+V^2]^{1/2}\Big)^{p/2}.\label{eq:sing_U}
\end{align}
This expression shows that the $L^p(\bbD,\HM)$-norm is equivalent to the $L^p(\bbD)^4$-norm on the Stokes parameters.
% Moreover, it holds that
% \begin{align}\label{eq:det_Stokes}
%     \det \MU = \frac{1}{4}\left(I^2-(Q^2+U^2 +V^2)\right).
% \end{align}
\subsection{Assumptions on the parameters} Let $\bbD = \RR^3\times \RR^3\backslash\{0\}$. Throughout this paper we will make the following assumptions on the parameters in Equation \cref{eq:rte_W}:

\medskip

\noindent The velocity function $\nu\in C^{1,1}(\RR^3)$ has a Lipschitz continuous and uniformly bounded gradient $\nabla_x \nu$, and there exist  constants $\nu_{\min},\nu_{\max}>0$ such that $\nu_{\min}\leq \nu(x)\leq \nu_{\max}$ for all $x\in \RR^3$.

\medskip

\noindent The functions $n:\bbD\to \RR$ and $\Sigma:\bbD\to \RR$ satisfy
\begin{align*}
     n,\Sigma\in L^\infty(\bbD),\quad\Sigma\geq 0.
\end{align*}

\medskip

\noindent The function $\Sigma(x,k)$ is related to $\sigma(x,k\cdot k')$ and $T(k,k')$ via the normalization condition
\begin{align}
\label{eq:normalization}
\Sigma(x,k) I_2 = \int_{\bbS_{|k|}} \sigma(x,k\cdot k') T(k,k') T(k,k')^*\,d\lambda(k'),
\end{align}
with $I_2$ the $2\times2$ identity matrix. Note that \eqref{eq:normalization} implies that $\Sigma(x,k)$ is radially symmetric in $k$.
% \begin{assumption}
% \label{assumption:velocity}
% The velocity function $v(x)$ has a Lipschitz continuous gradient $\nabla_x v(x)$, and there exist positive constants $v_{min},v_{max}\in \RR$ such that $v_{min}\leq v(x)\leq v_{max}$ for all $x\in \RR^3$.
% \end{assumption}
% \begin{assumption}
% \label{ass:normalization}
% The function $\Sigma(x,k)$ is related to $\sigma(x,k\cdot k')$ and $T(k,k')$ via the normalization condition
% \begin{align}\label{eq:normalization}
% \Sigma(x,k) I_2 = \int_{\bbS_{|k|}} \sigma(x,k\cdot k') T(k,k') T(k,k')^*\,d\lambda(k'),
% \end{align}
% with $I_2$ the $2\times2$ identity matrix.
% \end{assumption}
% \begin{assumption}
% \label{ass:CoupleBound}
% The parameter $n(x,k)$ is a bounded measurable function on $\bbD$, i.e.,
% \begin{align*}
%     n(x,k)\in L^\infty(\bbD).
% \end{align*}
% \end{assumption}
% \begin{assumption}
% \label{ass: ScatterBound}
% The parameter $\Sigma(x,k)$ is a positive bounded measurable function on $\bbD$, i.e.,
% \begin{align*}
%     \Sigma(x,k)\in L^\infty(\bbD), \qquad \Sigma\geq 0.
% \end{align*}
% \end{assumption}
% \begin{assumption}
% \label{ass: SourceNorm}
% The source term $\MF$ is an element of $L^p(\bbD,\HM)$, i.e.,
% \begin{align*}
%     \|\MF\|_{L^p(\bbD,\HM)}<\infty.
% \end{align*}
% \end{assumption}

\section{Radiative transfer as a system of Liouville equations}
\label{section: LiouvilleUnbounded}
In this section we consider the unbounded domain $\bbD = \bbX\times \bbK$ with $ \bbX= \RR^3, \bbK= \RR^3\backslash\{0\}$. The point $k=0$ is left out of the domain to avoid singularities in $\nabla_kH$. The case of a bounded spatial domain $\bbX_b$ is treated in \Cref{section: bounded-domain}.

We first consider the radiative transfer equation \cref{eq:rte_W} without the scattering terms $S(\MW)$ and $\Sigma\MW$ and the source term $\MF$. In this situation, \cref{eq:rte_W} can be written into a coupled system of equations for the Stokes parameters $(I,Q,U,V)$ of the coherence matrix $\MW$, see \cref{eq:Stokes}, as follows
\begin{align}
    \frac{\partial I}{\partial t} &= [I,H],      \label{eq:evo_I}\\
    \frac{\partial Q}{\partial t} &= [Q,H]+2nU,   \label{eq:evo_Q}\\
    \frac{\partial U}{\partial t} &= [U,H]-2nQ,  \label{eq:evo_U}\\
    \frac{\partial V}{\partial t} &= [V,H].  \label{eq:evo_V}
\end{align}
Here, we use the Poisson bracket $[f,g]=\nabla_k f \cdot\nabla_x g -\nabla_x f \cdot\nabla_k g $. Since the coupling only acts on $U$ and $Q$ through $n$ via a rotation, this system of equations has the following useful property. Also recall that $H(x,k)=\nu(x)|k|$.
% {\color{red} It is of course only a matter of definition or convention, but shall we change the sign of the bracket?}
\begin{proposition}
\label{prop:CoupledFunction}
Let the functions $I,Q,U,V$ satisfy \cref{eq:evo_I}-\cref{eq:evo_V}, then
\begin{align*}
    \frac{\partial}{\partial t}f(I,Q^2+U^2,V) = [f(I,Q^2+U^2,V),H]
\end{align*}
for any differentiable functions $f\colon\RR^3\to \RR$.
\end{proposition}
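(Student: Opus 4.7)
The plan is to exploit two facts: first, that the Poisson bracket $[\,\cdot\,,H]$ is a first-order differential operator in $(x,k)$ and therefore obeys the Leibniz rule; second, that the coupling terms involving $n$ cancel when we pass to the combination $Q^2+U^2$. Together these reduce the statement to a chain-rule computation.

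First I would record the derivation property: for sufficiently smooth scalar functions $g_1,g_2$ on $\bbX\times\bbK$, a direct expansion of $\nabla_k(g_1g_2)\cdot\nabla_x H-\nabla_x(g_1g_2)\cdot\nabla_k H$ via the product rule gives
\[
[g_1g_2,H]=g_1[g_2,H]+g_2[g_1,H].
\]
More generally, for a differentiable $F\colon\RR^m\to\RR$ and smooth $g_1,\dots,g_m$, the chain rule applied inside the gradients yields
\[
[F(g_1,\dots,g_m),H]=\sum_{j=1}^m\partial_jF(g_1,\dots,g_m)\,[g_j,H].
\]

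Next I would verify that $P\colonequals Q^2+U^2$ satisfies a bracket equation without a coupling contribution. Differentiating in $t$ and using \cref{eq:evo_Q}--\cref{eq:evo_U},
\begin{align*}
\tfrac{\partial}{\partial t}P
&=2Q\bigl([Q,H]+2nU\bigr)+2U\bigl([U,H]-2nQ\bigr)\\
&=2Q[Q,H]+2U[U,H]=[Q^2,H]+[U^2,H]=[P,H],
\end{align*}
where the $n$-contributions cancel algebraically and in the last step I use the Leibniz rule established above.

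Finally I would combine these ingredients. Setting $\Phi\colonequals f(I,P,V)$ and applying the ordinary chain rule in $t$ together with \cref{eq:evo_I}, \cref{eq:evo_V} and the equation for $P$,
\begin{align*}
\tfrac{\partial}{\partial t}\Phi
&=\partial_1 f\,\tfrac{\partial I}{\partial t}+\partial_2 f\,\tfrac{\partial P}{\partial t}+\partial_3 f\,\tfrac{\partial V}{\partial t}\\
&=\partial_1 f\,[I,H]+\partial_2 f\,[P,H]+\partial_3 f\,[V,H]\\
&=[f(I,P,V),H],
\end{align*}
the last equality by the chain-rule identity for the bracket. The only potential obstacle is regularity: the calculation requires $I,Q,U,V$ and $f$ to be smooth enough that the spatial gradients in $[\,\cdot\,,H]$ make classical sense. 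Under the standing assumption $\nu\in C^{1,1}$ and for classical solutions of \cref{eq:evo_I}--\cref{eq:evo_V} with $f\in C^1$, all manipulations are justified pointwise; otherwise the identity should be interpreted in the distributional/weak sense consistent with the semigroup setting used later in \Cref{section: LiouvilleUnbounded}.
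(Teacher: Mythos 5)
Your proposal is correct and follows essentially the same route as the paper's proof: chain rule in $t$, cancellation of the $n$-coupling terms in the combination $Q\,\partial_t Q + U\,\partial_t U$, and the derivation (chain-rule) property of the Poisson bracket in its first argument to reassemble $[f(I,Q^2+U^2,V),H]$. The only difference is presentational --- you isolate the intermediate identity $\partial_t(Q^2+U^2)=[Q^2+U^2,H]$ and make the cancellation explicit, which the paper's one-pass computation leaves implicit; your closing remark on regularity is a reasonable addition but not part of the paper's argument.
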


\begin{proof}
Since $I,Q,U,V$ are solutions to \cref{eq:evo_I}-\cref{eq:evo_V}, the chain rule implies that
\begin{align*}
    &\frac{\partial}{\partial t}f(I,Q^2+U^2,V) = f_1\frac{\partial I}{\partial t}+2f_2\left(Q\frac{\partial Q}{\partial t}+U\frac{\partial U}{\partial t}\right)+f_3\frac{\partial V}{\partial t}\\
    &=f_1[I,H]+2f_2\left(Q[Q,H]+U[U,H]\right)+f_3[V,H],
\end{align*}
where $f_i$ denotes the partial derivative of $f$ with respect to the $i$-th component. Since the canonical Poisson bracket contains derivatives, it satisfies the chain rule in its first argument and we have
\begin{align*}
   f_1[I,H]+2f_2(Q[Q,H]+U[U,H])+f_3[V,H] = [f(I,Q^2+U^2,V),H].
\end{align*}
\end{proof}
It can be seen that for $n=0$, Equations \cref{eq:evo_I}-\cref{eq:evo_V} decouple and the time evolution of each Stokes parameter is governed by a Liouville equation. 
For convenience of the reader and for later reference, we give a proof of the unique solvability of \cref{eq:evo_I}-\cref{eq:evo_V}.
% using the method of characteristics.

\subsection{Method of characteristics}
We proceed to solve the Liouville equation \cref{eq:evo_I} using the method of characteristics. For this purpose we consider the characteristic equations
\begin{align}
    \frac{\partial X}{\partial t}(t) &=\hphantom{-}\nabla_k H(X(t),K(t)), \label{eq:charX}\\
    \frac{\partial K}{\partial t}(t) & =-\nabla_x H(X(t),K(t)). \label{eq:charK}
\end{align}
Note that $\nabla_k H(x,k)=\nu(x)k/|k|$ and $\nabla_x H(x,k)=|k|\nabla_x \nu(x)$. The well-posedness of the characteristic equations is proven by the following lemma.

\begin{lemma}
\label{lem:existence_XK}
% State assumptions on $\bbK$ (probably: excluding $0$ and boundedness) and $v\in C^2(\RR^3)$ with bounded derivatives (check; could be weakened to Caratheodory functions, see Hale ODE).
For any $x_0\in\bbX$, $k_0\in\bbK$ and $T\geq0$, the characteristic equations \cref{eq:charX}--\cref{eq:charK} have a unique solution $X,K\in C^1([0,T],\RR^3)$ satisfying $X(0)=x_0$ and $K(0)=k_0$. Moreover,  it holds that
\begin{align}
    \label{eq:bound_K}
    \frac{\nu_{\min}}{\nu_{\max}}|k_0|\leq |K(t)|\leq \frac{\nu_{\max}}{\nu_{\min}}|k_0|\quad\textnormal{for } 0\leq t\leq T.
\end{align}
\end{lemma}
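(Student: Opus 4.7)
The plan is to combine classical Picard--Lindelöf theory with a conservation-law argument exploiting the Hamiltonian structure. Since $\nabla_k H(x,k) = \nu(x) k/|k|$ and $\nabla_x H(x,k) = |k|\nabla_x\nu(x)$, and $\nu\in C^{1,1}(\RR^3)$ with Lipschitz gradient, the right-hand side of \cref{eq:charX}--\cref{eq:charK} is locally Lipschitz on the open set $\RR^3\times(\RR^3\setminus\{0\})$. The Picard--Lindelöf theorem therefore yields a unique maximal $C^1$-solution $(X,K)$ on some interval $[0,T_{\max})$ with $T_{\max}\in(0,\infty]$ and values in $\RR^3\times\bbK$.

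The crucial observation is that $H$ is a first integral of the flow: a direct computation gives
\begin{align*}
\frac{d}{dt}H(X(t),K(t)) = \nabla_x H\cdot \dot X + \nabla_k H\cdot \dot K = \nabla_x H\cdot \nabla_k H - \nabla_k H\cdot \nabla_x H = 0.
\end{align*}
Hence $\nu(X(t))\,|K(t)| = \nu(x_0)\,|k_0|$ on $[0,T_{\max})$, and combining this identity with the uniform bounds $\nu_{\min}\leq \nu\leq \nu_{\max}$ yields $|K(t)|=\nu(x_0)|k_0|/\nu(X(t))$, from which the two-sided estimate \cref{eq:bound_K} follows immediately.

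It remains to upgrade local to global existence. The lower bound in \cref{eq:bound_K} shows that $K(t)$ cannot approach the singular set $\{k=0\}$, while the upper bound keeps $|K(t)|$ bounded. Moreover, $|\dot X(t)| = \nu(X(t))\leq \nu_{\max}$, so $X$ grows at most linearly and cannot escape to infinity in finite time either. Hence $(X(t),K(t))$ stays in a fixed compact subset of $\RR^3\times\bbK$ on $[0,T_{\max})$, and the standard continuation criterion forces $T_{\max}=\infty$. In particular the solution exists on $[0,T]$, which finishes the proof.

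I do not expect any deep obstacle: the only subtlety is that the vector field is singular at $k=0$, so one cannot invoke a global Lipschitz estimate directly. The Hamiltonian conservation is what cleanly circumvents this by providing the two-sided bound on $|K(t)|$ before a continuation argument is attempted.
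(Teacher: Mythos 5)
Your proposal is correct and follows essentially the same route as the paper: local existence and uniqueness via Picard--Lindel\"of on the open set $\RR^3\times(\RR^3\setminus\{0\})$, conservation of the Hamiltonian $H(x,k)=\nu(x)|k|$ along the flow to obtain the two-sided bound \cref{eq:bound_K}, and then a continuation argument to global existence. Your version merely spells out the continuation step (the bound $|\dot X|=\nu(X)\leq\nu_{\max}$ and the confinement of $(X,K)$ to a compact subset of the domain) in more detail than the paper, which delegates this to a citation.
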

\begin{proof}
\textbf{Step 1.} By assumption, $\nabla_x \nu$ is Lipschitz continuous, and it follows that $\nabla_k H$ and $\nabla_x H$ are locally Lipschitz continuous for $(x,k)\in\bbD$. Hence, the right-hand side of \cref{eq:charX}--\cref{eq:charK} is locally Lipschitz in $\bbD$, and the Picard-Lindel\"of theorem, see e.g.\ \cite[Theorem 3.1]{Hale1980} ensures the existence and uniqueness of a continuously differentiable local solution $(X,K)$ that depends continuously on the initial data $(x_0,k_0)\in \bbD$.

\medskip

\textbf{Step 2.}  Observe that $H(x,k)=\nu(x)|k|$ is preserved in time, i.e., $\nu(x_0)|k_0| = H(x_0,k_0)=H(X(t),K(t))=\nu(X(t))|K(t)|$ for all $s$ for which the solution exists (from Step 1.). Thus, by the assumption that $0<\nu_{\min}\leq \nu(x)\leq \nu_{\max}<\infty$ for all $x \in \RR^3$, we obtain \cref{eq:bound_K} for all $t$ in the local existence interval. Therefore, we can extend the local solution from Step 1. to a global solution, cf. \cite[p. 18]{Hale1980}.
\end{proof}
In view of \cref{lem:existence_XK}, we can define the flow map 
\begin{equation}
\label{eq:flowmap}
\phi_t\colon (x_0,k_0)\mapsto (X(t),K(t))
\end{equation}
 for each $t>0$, and, similarly by reversing the flow, its inverse $\phi_{-t}$. In fact $\phi_{-t}(x,k)=\phi_t(x,-k)$. We note that $\phi_{t+s}=\phi_t\circ\phi_s$ for all $s,t\in\mathbb{R}$. By integration of \cref{eq:charX}--\cref{eq:charK} the flow map can be written as
\begin{align}
    \label{eq:flow_map}
    \phi_t(x_0,k_0) = \left(x_0+\int_0^t \nu(X(s))\frac{K(s)}{|K(s)|}\ddd{s},\quad k_0-\int_0^t\nabla_x \nu(X(s))|K(s)|\ddd{s}\right).
\end{align}
Because the characteristic curves are governed by a Hamiltonian system of equations, this flow map preserves phase-space volume.
\begin{lemma}[Liouville's theorem, {\cite[Theorem 8.3]{Fasano2006}}]
\label{lemma:Liouville}
For all $t\in \RR$, the flow map $\phi_t \colon\bbD\to\bbD$ defined in \eqref{eq:flowmap} preserves phase-space volume, i.e.,
\begin{align*}
    |\det D\phi_t|=1,
\end{align*}
where $D\phi_t$ denotes the Jacobian of the mapping $(x,k)\mapsto\phi_t(x,k)$.
\end{lemma}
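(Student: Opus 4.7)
The plan is to use the standard Abel--Jacobi formula for the time evolution of the Jacobian determinant along a flow and then exploit the Hamiltonian structure to conclude that the underlying vector field is divergence free.

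First I would fix $(x_0,k_0)\in\bbD$ and set $M(t)\colonequals D\phi_t(x_0,k_0)$, viewed as a $6\times 6$ matrix. Writing the characteristic system \cref{eq:charX}--\cref{eq:charK} as $\frac{d}{dt}(X,K)=F(X,K)$ with the Hamiltonian vector field $F(x,k)\colonequals(\nabla_kH(x,k),-\nabla_xH(x,k))$, differentiation with respect to the initial data gives the variational equation
\begin{equation*}
\frac{d}{dt}M(t)=A(t)\,M(t),\qquad M(0)=I_6,
\end{equation*}
where $A(t)\colonequals DF(\phi_t(x_0,k_0))$. Differentiability of the flow with respect to initial data is guaranteed by the hypothesis $\nu\in C^{1,1}$ together with the bound \cref{eq:bound_K} which keeps the trajectory away from $k=0$; where second derivatives are needed but only exist a.e.\ (by Rademacher applied to the Lipschitz map $\nabla_x\nu$), I would either proceed with a.e.\ arguments or approximate $\nu$ by a smooth sequence and pass to the limit at the end.

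Next, applying the Abel--Jacobi identity to this linear ODE yields
\begin{equation*}
\det M(t)=\exp\!\left(\int_0^t\Tr A(s)\,ds\right).
\end{equation*}
The Jacobian $DF$ has the $2\times 2$ block structure
\begin{equation*}
A=\begin{pmatrix}\nabla_x\nabla_kH & \nabla_k\nabla_kH\\[2pt]-\nabla_x\nabla_xH & -\nabla_k\nabla_xH\end{pmatrix},
\end{equation*}
so $\Tr A=\sum_{i=1}^3(\partial_{x_i}\partial_{k_i}H-\partial_{k_i}\partial_{x_i}H)=0$ by equality of mixed partials, which is the familiar statement that Hamiltonian vector fields are divergence free. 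Hence $\det M(t)\equiv 1$, giving $|\det D\phi_t|=1$.

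The main obstacle is the limited regularity: $H(x,k)=\nu(x)|k|$ has only Lipschitz first derivatives in $x$, so Schwarz's theorem on equality of mixed partials applies only almost everywhere. The cleanest way around this is to approximate $\nu$ by mollification $\nu_\varepsilon\in C^\infty$, solve the corresponding characteristic system, observe $|\det D\phi_t^\varepsilon|=1$ by the argument above, and then use continuous dependence of the flow on the vector field (with uniform Lipschitz bounds on bounded sets together with \cref{eq:bound_K} ensuring trajectories stay in a compact subset of $\bbD$) to pass $\varepsilon\to 0$ and recover $|\det D\phi_t|=1$.
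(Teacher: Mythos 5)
Your proof is correct and follows essentially the same route as the paper: the lemma is quoted there from the literature (\cite[Theorem 8.3]{Fasano2006}), and the identical Abel--Liouville argument --- the variational equation for $D\phi_t$ together with $\Tr DF=\divergence F=0$ for the Hamiltonian field $F=(\nabla_kH,-\nabla_xH)$ --- is exactly the computation the paper itself carries out later in the proof of \cref{prop:TransformU}. Your attention to the $C^{1,1}$ regularity is a genuine refinement the paper glosses over; the only point to sharpen is the mollification endgame: uniform convergence of the flows $\phi^\varepsilon_t\to\phi_t$ does not by itself give pointwise convergence of the Jacobians, so you should instead pass to the limit in the measure-preservation identity $\int f(\phi^\varepsilon_t(x,k))\dx\dk=\int f(x,k)\dx\dk$ for continuous compactly supported $f$ (dominated convergence applies since the bounds \cref{eq:bound_K} and $\nu\leq\nu_{\max}$ hold uniformly in $\varepsilon$), which shows $\phi_t$ preserves Lebesgue measure and hence $|\det D\phi_t|=1$ wherever the (a.e.-defined, by Rademacher) derivative exists.
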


\subsection{Well-posedness of the Liouville equation}
Using the characteristic curves, we are able to solve equations \eqref{eq:evo_I}--\eqref{eq:evo_V} if $n=0$. In this case the system of equations is completely decoupled and it suffices to consider the prototypical Liouville equation
\begin{alignat}{4}
    \frac{\partial u}{\partial t} &= [u,H] &\quad&\text{on } \bbD\times (0,T),     \label{eq:evo_poisson}\\
    u(0)&=u_0 &&\text{on } \bbD,  \label{eq:evo_poisson_IC}
\end{alignat}
%for $x\in\bbX$ and $k\in\bbK$
with $u=u(x,k,t)$ and for some fixed $T>0$. Here, $u(0)$ denotes the function $u(\cdot,\cdot,0)$.
First we discuss how the characteristic curves relate to operator semigroups.
\begin{lemma}
\label{lemma:Semigroup}
For $1\leq p<\infty $ the flow map $\phi_t$, given in \cref{eq:flow_map}, define a strongly continuous group of contractions $\{G(t)\}_{t\in\RR}$ on $L^p(\bbD)$ via
    \begin{align}
    \label{eq:GroupUnbounded}
        (G(t)v)(x,k) \colonequals v(\phi_{-t}(x,k)), \qquad v\in L^p(\bbD).
    \end{align}
Furthermore, $G(t)$ preserves positivity; i.e., if $v$ is non-negative, then so is $G(t)v$.
% for all $t\in\RR$.
\end{lemma}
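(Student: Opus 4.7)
The plan is to verify the four defining properties of a strongly continuous group in turn, using only what is already established: the flow identities for $\phi_t$, Liouville's theorem (\Cref{lemma:Liouville}), and the bound \eqref{eq:bound_K}. Preservation of positivity will then be essentially immediate.

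First, I would establish the algebraic group property $G(t+s)=G(t)G(s)$ and $G(0)=\mathrm{Id}$. Since $\phi_{t+s}=\phi_t\circ\phi_s$ on $\bbD$, one has $\phi_{-(t+s)}=\phi_{-s}\circ\phi_{-t}$, and a direct substitution in the definition \eqref{eq:GroupUnbounded} yields the group law. Next, I would show that each $G(t)$ is in fact an isometry (hence a contraction) on $L^p(\bbD)$. Given $v\in L^p(\bbD)$, the change of variables $(y,\ell)=\phi_{-t}(x,k)$ is a $C^1$-diffeomorphism of $\bbD$ onto itself with $|\det D\phi_{-t}|=1$ by Liouville's theorem, so
\begin{align*}
    \|G(t)v\|_{L^p(\bbD)}^p=\int_{\bbD}|v(\phi_{-t}(x,k))|^p\dx\dk=\int_{\bbD}|v(y,\ell)|^p\ddd{y}\ddd{\ell}=\|v\|_{L^p(\bbD)}^p.
\end{align*}

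The main obstacle is strong continuity, i.e.\ $\|G(t)v-v\|_{L^p(\bbD)}\to 0$ as $t\to 0$ for every $v\in L^p(\bbD)$. Since the $G(t)$ are uniformly bounded (in fact isometries), a standard $\varepsilon/3$ density argument reduces this to proving the limit for $v\in C_c(\bbD)$. For such $v$, \Cref{lem:existence_XK} gives that $\phi_{-t}(x,k)\to (x,k)$ as $t\to 0$ for each fixed $(x,k)\in\bbD$, so $G(t)v(x,k)\to v(x,k)$ pointwise by continuity of $v$. To upgrade pointwise convergence to $L^p$-convergence, I would control the supports: if $\supp v\subset K_0\Subset\bbD$, then $\supp G(t)v\subset \phi_t(K_0)$, and by the continuous dependence of the characteristic curves on the initial data together with the two-sided bound \eqref{eq:bound_K} on $|K(t)|$ (which keeps trajectories bounded away from $k=0$ and from infinity for $|t|\leq 1$), all these supports lie in a common compact set $K_1\subset\bbD$. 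Uniform boundedness of $v$ together with dominated convergence over $K_1$ then yields $\|G(t)v-v\|_{L^p(\bbD)}\to 0$, and density of $C_c(\bbD)$ in $L^p(\bbD)$ completes the proof of strong continuity.

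Finally, positivity is immediate: if $v\geq 0$ almost everywhere, then since $\phi_{-t}\colon\bbD\to\bbD$ is a measure-preserving diffeomorphism, $G(t)v=v\circ\phi_{-t}\geq 0$ almost everywhere as well.
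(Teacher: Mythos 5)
Your proposal is correct and follows essentially the same route as the paper: group law from the flow identity $\phi_{t+s}=\phi_t\circ\phi_s$, isometry via the change of variables justified by Liouville's theorem (\cref{lemma:Liouville}), strong continuity first on $C_c(\bbD)$ and then by density, and positivity from the fact that $G(t)$ acts by composition with a diffeomorphism. The only difference is that you spell out the dominated-convergence argument for strong continuity (uniform support control via \cref{eq:bound_K} and the bounded speed of the flow), which the paper's proof leaves implicit.
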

\begin{proof}
Fix $v \in C_{c}(\bbD)$.  By \cref{lem:existence_XK}, for all $(x,k)\in \bbD, t\in\RR$, there exists a unique solution $X,K\in C^{1}([0,t],\mathbb{R}^{3})$ of the characteristic equations \eqref{eq:charX}--\eqref{eq:charK} with $X(t)=x, K(t)=k$.  Setting $\phi_{-t}(x,k)=(X(0),K(0))$, we deduce that
\begin{align*}
    (G(t)v)(x,k) = v(\phi_{-t}(x,k))
\end{align*}
is defined for all $t\in\RR$ and satisfies the group properties $G(0)v = v$ and $G(t+s)v = G(t)G(s)v$ for all $s,t\in\RR$. Furthermore, since $v\in C_c(\bbD)$, it holds that
\begin{align*}
    \lim_{t\xrightarrow{}0}\|(G(t)v)-v\|_{L^p(\bbD)} = \lim_{t\xrightarrow{}0}\|v(\phi_{-t}(\cdot,\cdot))-v\|_{L^p(\bbD)}=0.
\end{align*}
Liouville's theorem implies the contraction property as follows. Since $|\det D\phi_t|=1$ by \cref{lemma:Liouville}, it follows that
\begin{align*}
    \|G(t)v\|_{L^p(\bbD)} = \|v(\phi_{-t}(\cdot,\cdot))\|_{L^p(\bbD)} = \|v\|_{L^p(\bbD)}.
\end{align*}
By density of $C_c(\bbD)$ in $L^p(\bbD)$ these results carry over to functions $v\in L^p(\bbD)$. Furthermore, since the action of $G(t)$ is a translation of the function $v$, preservation of positivity is clear.
\end{proof}

Now define the space
\begin{align}\label{eq:def_Wp}
    W^p(\bbD) \colonequals \{v\in L^p(\bbD): [v,H]\in L^p(\bbD)\}.
\end{align}
Before proceeding we state some density results for $W^p(\bbD)$. We begin by arguing that neighborhoods of $k=0$ can be neglected, which uses the special form of $H$.
\begin{lemma}
\label{lemma:density_full}
The space $W^p_c(\bbD)$ of functions in $W^p(\bbD)$ with compact support in $\bbK$ is dense in $W^p(\bbD)$.
\end{lemma}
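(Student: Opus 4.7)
The plan is to construct explicit cutoffs in the $k$-variable and exploit the particular form $H(x,k)=\nu(x)|k|$, which makes $\nabla_x H$ vanish linearly at $k=0$, to control the Poisson-bracket error.

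First I would fix a smooth radial cutoff $\chi\colon [0,\infty)\to[0,1]$ with $\chi(r)=0$ for $r\leq 1$ or $r\geq 4$ and $\chi(r)=1$ for $2\leq r\leq 2$ (a smoothed indicator of an annulus), and for $\varepsilon\in(0,1)$ set
\begin{equation*}
\chi_\varepsilon(k) \colonequals \chi(|k|/\varepsilon)\cdot\eta(\varepsilon|k|),
\end{equation*}
where $\eta$ is a smooth cutoff equal to $1$ on $[0,1]$ and $0$ on $[2,\infty)$. Then $\chi_\varepsilon\in C^\infty_c(\bbK)$, $0\le\chi_\varepsilon\le 1$, $\chi_\varepsilon\to 1$ pointwise on $\bbK$ as $\varepsilon\to 0$, and on its support $\chi_\varepsilon$ is bounded away from $0$ and $\infty$ in $|k|$. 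I would then claim that for $v\in W^p(\bbD)$, the approximants $v_\varepsilon\colonequals \chi_\varepsilon v$ satisfy $v_\varepsilon\in W^p_c(\bbD)$ and $v_\varepsilon\to v$ in $W^p(\bbD)$.

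The convergence $v_\varepsilon\to v$ in $L^p(\bbD)$ is immediate from dominated convergence since $|v_\varepsilon|\le|v|\in L^p(\bbD)$. For the bracket, since $\chi_\varepsilon$ depends only on $k$,
\begin{equation*}
[v_\varepsilon,H]=\chi_\varepsilon[v,H]+v\,[\chi_\varepsilon,H]=\chi_\varepsilon[v,H]+v\,(\nabla_k\chi_\varepsilon)\cdot\nabla_x H.
\end{equation*}
The first term converges to $[v,H]$ in $L^p(\bbD)$ by dominated convergence. For the second, I would use that $\nabla_x H(x,k)=|k|\nabla_x\nu(x)$ together with the two scalings of $\chi_\varepsilon$: on the inner annulus $\{\varepsilon\le|k|\le 4\varepsilon\}$ one has $|\nabla_k\chi_\varepsilon|\lesssim\varepsilon^{-1}$ and $|k|\le 4\varepsilon$, so $|k||\nabla_k\chi_\varepsilon|\lesssim 1$; on the outer annulus $\{\varepsilon^{-1}\le|k|\le 2\varepsilon^{-1}\}$ one has $|\nabla_k\chi_\varepsilon|\lesssim\varepsilon$ and $|k|\le 2\varepsilon^{-1}$, so again $|k||\nabla_k\chi_\varepsilon|\lesssim 1$. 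Combining with $\|\nabla_x\nu\|_{L^\infty}<\infty$ gives a uniform pointwise bound
\begin{equation*}
|v(\nabla_k\chi_\varepsilon)\cdot\nabla_x H|\lesssim |v|\,\mathbf{1}_{A_\varepsilon}, \qquad A_\varepsilon\colonequals\{|k|\le 4\varepsilon\}\cup\{|k|\ge \varepsilon^{-1}\},
\end{equation*}
and since $|A_\varepsilon|\to 0$ in the relevant sense while $v\in L^p$, dominated convergence yields that this term tends to $0$ in $L^p(\bbD)$.

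The main obstacle is the behaviour near $k=0$: a naive cutoff $\chi_\varepsilon$ has $|\nabla_k\chi_\varepsilon|$ of order $\varepsilon^{-1}$ there, which is not controlled by $L^p$-integrability of $v$ alone. The crucial observation is the structural one, namely that $\nabla_x H=|k|\nabla_x\nu$ carries an extra factor of $|k|$ that precisely cancels this blow-up; this is exactly the point at which the specific form $H(x,k)=\nu(x)|k|$ (rather than a generic Hamiltonian) enters the argument, and it is why the lemma is stated for this $H$. The cut-off at infinity is by contrast routine.
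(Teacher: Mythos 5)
Your proposal is correct and takes essentially the same route as the paper: both proofs multiply $v$ by a product of an inner cutoff (removing a neighborhood of $k=0$) and an outer dilation cutoff, and both hinge on the identical structural cancellation that $\nabla_x H=|k|\nabla_x\nu$ supplies a factor $|k|$ making $|k|\,|\nabla_k(\text{cutoff})|$ uniformly bounded on the transition annuli, after which dominated convergence finishes the argument. One slip to fix: your stated definition of $\chi$ ($\chi(r)=0$ for $r\geq 4$, ``$\chi(r)=1$ for $2\leq r\leq 2$'') is internally inconsistent and, taken literally, would prevent $\chi_\varepsilon\to 1$; your subsequent estimates presuppose the intended choice $\chi=0$ on $[0,1]$ and $\chi=1$ on $[2,\infty)$, with the factor $\eta(\varepsilon|k|)$ alone handling the cutoff at infinity.
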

\begin{proof}
Let $B_r$ denote the open ball in $\RR^3$ of radius $r$ centred at $0$.  Let $\eta\in C^\infty_c(\RR^{3})$ be a cutoff function, i.e., $\supp\eta\subseteq B_2,\eta=1$ in $B_1$ and $0\leq\varphi\leq1$. For $n\in \mathbb{N}$, define
\begin{align*}
    u_n(x,k) \colonequals \eta_n(k)\psi_n(k)u(x,k),\quad \eta_n(k) \colonequals \eta(k/n),\quad \psi_n(k) \colonequals 1-\eta\left(nk\right).
\end{align*}
Then it holds that $\supp u_n \subseteq \Omega_n = \bbX\times (B_{2n}\backslash B_{\frac{1}{n}})$. Hence,
\begin{align*}
    u_n\to u
    \quad\text{and}\quad  [u_n,H]\to[u,H]\quad \text{almost everywhere as }n\to\infty.
\end{align*}
Since $0\leq \eta \leq 1$, we infer that $\{|u_n|^p\}_n$ is uniformly integrable. It remains to show uniform integrability of $[u_n,H]$, which, in turn, implies $u_n \to u$ in $W^p(\bbD)$ by dominated convergence. To that end, using the triangle inequality, one verifies that
\begin{align*}
    |[u_n,H]|^p\leq g_n \colonequals 3^{p-1}\left(|[u,H]|^p+|u|^p|[\eta_n,H]|^p+|u|^p|[\psi_n,H]|^p\right).
\end{align*}
Observe that $\psi_n$ is constant for $|k|\leq 1/n$ or $|k|\geq 2/n$. Hence $[\psi_n,H]$ is supported on an annulus $A_n\colonequals \{1/n \leq |k|\leq 2/n\}$. On $A_n$, we compute 
\begin{align*}
|[\psi_n,H]|=n |[\eta,H]| = n |k| |\nabla_k\eta \cdot \nabla_x \nu|\leq 2 |\nabla_k\eta \cdot \nabla_x \nu|,
\end{align*}
which is uniformly bounded by the assumptions on $\nu$.
Thus, the term $|u|^p|[\psi_n,H]|^p$ is uniformly integrable.
Since $[\eta_n,H]=[\eta,H]/n$ is uniformly bounded in $n$, we conclude that $g_n$ is uniformly integrable, which proves the assertion.
\end{proof}
\begin{lemma}
\label{lemma:density-smooth_full}
The space $C^\infty_{c}(\bbD)$ of smooth functions with compact support in $\bbD$ is dense in $W^p(\bbD)$.
\end{lemma}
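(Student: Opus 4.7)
The plan is to approximate a given $u\in W^p(\bbD)$ in two stages: first reduce to functions with compact support in $\bbD$, then mollify. By \Cref{lemma:density_full}, we may assume from the outset that $u\in W^p_c(\bbD)$, i.e., $\supp u\subseteq \bbX\times K$ for some compact $K\subset \bbK$ bounded away from $k=0$.

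First I would cut off in the $x$-variable. Pick $\chi\in C^\infty_c(\RR^3)$ with $\chi=1$ on $B_1$, $0\le \chi\le 1$, and set $u_n(x,k):=\chi(x/n)u(x,k)$. Then $u_n\to u$ in $L^p(\bbD)$ by dominated convergence, and the Leibniz-type identity for the Poisson bracket gives
\begin{equation*}
[u_n,H] = \chi(x/n)[u,H] + u\,[\chi(x/n),H].
\end{equation*}
Since $\chi(x/n)$ depends only on $x$, $[\chi(x/n),H] = -\nabla_x\chi(x/n)\cdot\nabla_k H = -\tfrac{1}{n}(\nabla\chi)(x/n)\cdot \nu(x)\,k/|k|$, which is uniformly bounded by $C\nu_{\max}/n$ on $\supp u$. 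Combined with the fact that $u$ is bounded in $L^p$, this gives $u\,[\chi(x/n),H]\to 0$ in $L^p(\bbD)$, while $\chi(x/n)[u,H]\to [u,H]$ in $L^p(\bbD)$ by dominated convergence. Hence it suffices to approximate $u$ with compact support in all of $\bbD$.

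Second, I would mollify. Let $\rho_\varepsilon\in C^\infty_c(\RR^6)$ be a standard symmetric mollifier supported in the ball of radius $\varepsilon$. For $\varepsilon$ small enough, $u_\varepsilon := \rho_\varepsilon \ast u \in C^\infty_c(\bbD)$ because $\supp u$ is compact in $\bbD$. Standard mollifier estimates give $u_\varepsilon\to u$ in $L^p$. For the bracket term, write
\begin{equation*}
[u_\varepsilon,H] = \rho_\varepsilon\ast [u,H] + \bigl([u_\varepsilon,H] - \rho_\varepsilon\ast [u,H]\bigr),
\end{equation*}
where the first summand converges to $[u,H]$ in $L^p$. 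The second summand is precisely a Friedrichs commutator: since $H(x,k)=\nu(x)|k|$ has Lipschitz continuous gradient on the compact set $\supp u + \overline{B_\varepsilon}$ (using the assumption $\nu\in C^{1,1}(\RR^3)$ and boundedness away from $k=0$), Friedrichs' commutator lemma yields that this difference tends to $0$ in $L^p(\bbD)$ as $\varepsilon\to 0$. Thus $u_\varepsilon\to u$ in $W^p(\bbD)$.

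The main obstacle is the Friedrichs commutator estimate, which relies on the Lipschitz regularity of the coefficients $\nabla_xH$ and $\nabla_kH$ on $\supp u$; on general compact subsets of $\bbD$ this regularity follows from $\nu\in C^{1,1}$ together with the separation of $\supp u$ from $k=0$, so the argument goes through without further work.
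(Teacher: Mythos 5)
Your proof is correct and follows essentially the same route as the paper: first reduce to functions supported away from $k=0$ via \cref{lemma:density_full}, then approximate by smooth compactly supported functions using the Lipschitz regularity of $\nabla_x H$ and $\nabla_k H$ on such sets. The paper compresses your second stage into a citation of a known density theorem for graph spaces of first-order operators with Lipschitz coefficients, and your explicit $x$-cutoff plus Friedrichs commutator argument is precisely the content underlying that citation.
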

\begin{proof}
Using \cref{lemma:density_full}, we can approximate any function in $v\in W^p(\bbD)$ by functions $v_n$ that vanish for $|k|<1/n$ and whose support have a smooth boundary. Given the assumed regularity on $H$, we can approximate any $v_n$ by a function in $C^\infty_c(\bbD)$ by employing \cite[Theorem~4, p.~21]{Jensen2004}, and we obtain the assertion.
\end{proof}
We now proceed with characterizing the infinitesimal generator of the group $\{G(t)\}_{t\in\RR}$.
\begin{proposition}
\label{prop:inf-generator}
 Let $1\leq p <\infty$. The infinitesimal generator $A\colon D(A)\subset L^p(\bbD)\to L^p(\bbD)$ of the strongly continuous group $\{G(t)\}_{t\in\RR}$ is given by
\begin{align*}
    A v \colonequals [v,H], \qquad v\in D(A) \colonequals W^p(\bbD).
\end{align*}
\end{proposition}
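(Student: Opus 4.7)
Let $A_{0}$ denote the infinitesimal generator of $\{G(t)\}_{t\in\RR}$ with its maximal domain $D(A_{0})$. The task is to prove $D(A_{0})=W^{p}(\bbD)$ and that $A_{0}v=[v,H]$ on this domain. The plan is to proceed in three steps, handling the easy inclusion first and then the harder reverse inclusion by duality.

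First, I would treat the base case $v\in C_{c}^{\infty}(\bbD)$. Differentiating $G(t)v=v\circ\phi_{-t}$ by the chain rule and using \cref{eq:charX}--\cref{eq:charK}, one obtains pointwise
\begin{equation*}
\frac{d}{dt}G(t)v(x,k)\Big|_{t=0}=-Dv(x,k)\cdot\bigl(\nabla_{k}H(x,k),-\nabla_{x}H(x,k)\bigr)=[v,H](x,k).
\end{equation*}
The compactness of $\supp v$ together with \cref{eq:bound_K} confines $\supp G(t)v=\phi_{t}(\supp v)$ to a single compact subset of $\bbD$ for $|t|\leq t_{0}$, so uniform continuity of $Dv$ and the Hamiltonian vector field on that set upgrades the pointwise limit to uniform, and a fortiori $L^{p}$, convergence. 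This yields $C_{c}^{\infty}(\bbD)\subset D(A_{0})$ with $A_{0}v=[v,H]$. The inclusion $W^{p}(\bbD)\subset D(A_{0})$ then follows by closedness of $A_{0}$ (automatic for semigroup generators) combined with \cref{lemma:density-smooth_full}: any $v\in W^{p}(\bbD)$ is approximated by $v_{n}\in C_{c}^{\infty}(\bbD)$ with $v_{n}\to v$ and $[v_{n},H]\to[v,H]$ in $L^{p}$, so closedness gives $v\in D(A_{0})$ and $A_{0}v=[v,H]$.

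For the reverse inclusion $D(A_{0})\subset W^{p}(\bbD)$ I would argue by duality, identifying $A_{0}v$ with the distributional Poisson bracket $[v,H]$. A brief integration by parts, exploiting that $\divergence_{k}\nabla_{x}H=\divergence_{x}\nabla_{k}H$ as mixed partials commute, gives the antisymmetry $\int_{\bbD}[u,H]\varphi=-\int_{\bbD}u\,[\varphi,H]$ for $u,\varphi\in C_{c}^{\infty}(\bbD)$; this serves as the definition of $[v,H]$ as a distribution for $v\in L^{p}(\bbD)$. Now fix $v\in D(A_{0})$ and $\varphi\in C_{c}^{\infty}(\bbD)$. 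The substitution $(y,l)=\phi_{-t}(x,k)$ is measure-preserving by \cref{lemma:Liouville}, whence $\int G(t)v\cdot\varphi=\int v\cdot G(-t)\varphi$. Passing $t\to 0$ using $L^{p}$-convergence of $(G(t)v-v)/t$ to $A_{0}v$ on the left, and the first step applied to $\varphi$ (which delivers uniform, hence $L^{p'}$, convergence of $(G(-t)\varphi-\varphi)/t$ to $-[\varphi,H]$) on the right yields $\int A_{0}v\cdot\varphi=-\int v\,[\varphi,H]=\langle[v,H],\varphi\rangle$. Hence $A_{0}v=[v,H]$ as distributions, forcing $[v,H]\in L^{p}(\bbD)$ and thus $v\in W^{p}(\bbD)$.

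The main obstacle I anticipate lies in the third step, in particular the endpoint $p=1$, where the duality pairing on the right-hand side demands genuine $L^{\infty}$-convergence of $(G(-t)\varphi-\varphi)/t$ to $-[\varphi,H]$. This is precisely why the first step must be carried out in the uniform topology on $\bbD$ rather than merely in $L^{p}$; the bound \cref{eq:bound_K} is the ingredient that makes this upgrade possible, since it traps the supports of $G(t)\varphi$ in a common compact set and turns pointwise convergence of smooth functions into uniform convergence in one stroke.
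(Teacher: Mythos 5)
Your proposal is correct and follows essentially the same route as the paper's proof: the inclusion $D(A)\subset W^p(\bbD)$ via duality against $C_c^\infty$ test functions using the measure-preserving flow (\cref{lemma:Liouville}), and the reverse inclusion by first verifying the generator's action on smooth compactly supported functions and then extending by density (\cref{lemma:density-smooth_full}). The only cosmetic differences are that you establish the core step through uniform convergence on a common compact support (correctly flagging the $p=1$ endpoint, where $L^\infty$-convergence of the test-function difference quotients is needed), whereas the paper uses the fundamental-theorem identity along characteristics together with strong continuity of the group, and you invoke closedness of the generator explicitly where the paper leaves it implicit in its concluding density argument.
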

\begin{proof}
Let $v\in L^p(\bbD)$ and $\psi\in C_c^\infty(\bbD)$. Upon change of variables, we have that
\begin{align*}
    &\hphantom{=}\lim_{h\to0}\int_{\bbD}\frac{(G(h)v)(x,k)-v(x,k)}{h}\psi(x,k)\dx\dk\\
    &=\lim_{h\to0}\int_{\bbD}\frac{v(\phi_{-h}(x,k))-v(x,k)}{h}\psi(x,k)\dx\dk\\
    &=\lim_{h\to0}\int_{\bbD}v(x,k)\frac{\psi(\phi_{h}(x,k))-\psi(x,k)}{h}\dx\dk\\
    &=-\int_{\bbD}v(x,k)[\psi,H]\dx\dk.
\end{align*}
Hence, if $\lim_{h\to 0}(G(h)v-v)/h$ exists in $L^p(\bbD)$, then 
\begin{align*}
    \lim_{h\to0}\frac{1}{h}(G(h)v-v) = [v,H] \quad\text{in } L^p(\bbD).
\end{align*}
Conversely, assume that $v\in C^\infty_c(\bbD)\cap W^p(\bbD)$. Then
the identity
\[
v(\phi_{-h}(x,k)) - v(x,k) = h \int_0^1 [v,H](\phi_{-th}(x,k)) \ddd{t}
\]
implies that
\[
\|\frac{G(h)v-v}{h} - [v,H]\|_{L^p(\bbD)} \leq \|G(-th)[v,H]-[v,H]\|_{L^p(\bbD)} \to 0
\]
as $h\to 0$ by strong continuity of the group $G$, which concludes the proof using a density argument, \cref{lemma:density-smooth_full}.
\end{proof}

Using the semigroup $G(t)$ we can establish solvability of the Cauchy problem \eqref{eq:evo_poisson}--\eqref{eq:evo_poisson_IC}, and, more generally, of the inhomogeneous Cauchy problem
\begin{alignat}{4}
    \frac{\partial u}{\partial t} &= [u,H]+q &\quad&\text{on } \bbD\times (0,T), \label{eq:evo_poisson_inhom1}\\
    u(0)&=u_0 &&\text{on } \bbD. \label{eq:evo_poisson_inhom2} %\label{eq:evo_poisson_IC}
\end{alignat}
Firstly, for $q\in L^1((0,T),L^p(\bbD))$ and $u_0\in L^p(\bbD_b)$, the function $u\in C^0([0,T], L^p(\bbD))$ defined by
\begin{align}
    \label{eq:var_of_constants}
    u(t) \colonequals G(t) u_0 + \int_0^t G(t-s) q(s) \ddd{s}
\end{align}
is a mild solution of the inhomogeneous Cauchy problem \cref{eq:evo_poisson_inhom1}--\cref{eq:evo_poisson_inhom2}. 
Secondly, if $u\in C^1((0,T),L^p(\bbD))\cap C^0([0,T],W^p(\bbD))$ satisfies \cref{eq:evo_poisson_inhom1}--\cref{eq:evo_poisson_inhom2}, the $u$ is a classical solution.

The next result is a consequence of \cref{prop:inf-generator} and semigroup theory \cite[p. 106]{Pazy1983}.
\begin{corollary}
\label{cor:semigroup}
For any $T>0$, the abstract Cauchy problem \eqref{eq:evo_poisson}--\eqref{eq:evo_poisson_IC} has a unique mild solution $u\in C([0,T],L^{p}(\bbD))$ for every $u_{0}\in L^{p}(\bbD)$ given by
\begin{align*}
    u(x,k,t) = (G(t)u_0)(x,k).
\end{align*}
Furthermore if $u_{0}\in W^p(\bbD)$ then $u$ is a classical solution.
\end{corollary}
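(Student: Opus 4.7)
The plan is to apply standard $C_0$-semigroup theory directly, using \cref{prop:inf-generator} to identify the generator. All the work has already been done: once we know $A v = [v,H]$ with domain $W^p(\bbD)$ generates the strongly continuous group $\{G(t)\}_{t\in\RR}$, the corollary is essentially a restatement of general results on abstract linear Cauchy problems (see \cite[Chap.~4, Thm.~1.3 and Cor.~1.2]{Pazy1983} or the reference given just above).

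First, I would define $u(t) \colonequals G(t) u_0$ and note that by \cref{lemma:Semigroup} this gives a function $u \in C([0,T],L^p(\bbD))$ for any initial datum $u_0 \in L^p(\bbD)$. This is precisely the mild solution in the sense of \eqref{eq:var_of_constants} with $q \equiv 0$. To see uniqueness among mild solutions, I would use the standard argument: if $\tilde u \in C([0,T],L^p(\bbD))$ satisfied the Cauchy problem in some weak/mild sense with the same initial data, then by the group property and generator characterisation from \cref{prop:inf-generator}, the function $s \mapsto G(t-s)\tilde u(s)$ is constant in $s\in [0,t]$, which yields $\tilde u(t) = G(t) u_0 = u(t)$.

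For the classical solution statement, I would invoke the standard fact that $D(A)$ is invariant under the semigroup, i.e.\ $G(t)u_0 \in D(A) = W^p(\bbD)$ for every $t\geq 0$ whenever $u_0 \in W^p(\bbD)$, and that $t \mapsto G(t) u_0$ is then continuously differentiable in $L^p(\bbD)$ with
\begin{equation*}
\frac{d}{dt}G(t)u_0 = A G(t) u_0 = [G(t)u_0,H].
\end{equation*}
This yields $u \in C^1((0,T),L^p(\bbD)) \cap C^0([0,T],W^p(\bbD))$ and shows that $u$ satisfies \eqref{eq:evo_poisson}--\eqref{eq:evo_poisson_IC} pointwise in $t$, so $u$ is a classical solution in the sense described just before the statement.

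There is really no substantive obstacle here: the nontrivial content sits in \cref{lemma:Semigroup} (group property and contractivity via Liouville's theorem) and \cref{prop:inf-generator} (identification of the generator, which required the density results \cref{lemma:density_full} and \cref{lemma:density-smooth_full}). The corollary itself is a citation of Pazy's abstract theory and only requires careful bookkeeping to match the regularity spaces in the statement. If anything, the one point worth double-checking is that the \emph{classical solution} notion used by the authors (with $C^0([0,T],W^p(\bbD))$ regularity) genuinely follows from the semigroup-theoretic statement $u \in C^1([0,T],L^p(\bbD))$ with $u(t) \in D(A)$; this is immediate because continuity of $t \mapsto A u(t) = [u(t),H]$ in $L^p(\bbD)$, combined with continuity of $t \mapsto u(t)$ in $L^p(\bbD)$, is precisely continuity in the graph norm of $A$, i.e.\ in $W^p(\bbD)$.
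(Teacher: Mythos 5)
Your proposal is correct and matches the paper's treatment: the authors likewise deduce \cref{cor:semigroup} directly from \cref{prop:inf-generator} together with standard semigroup theory (citing \cite[p.~106]{Pazy1983}), with no additional argument beyond the bookkeeping you spell out. Note only that since the paper \emph{defines} mild solutions via the variation-of-constants formula \cref{eq:var_of_constants}, uniqueness of the mild solution is immediate by definition, so your constancy-of-$s \mapsto G(t-s)\tilde u(s)$ argument is not even needed.
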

\section{Well-posedness of the radiative transfer equation on full space}
\label{section: CoupleScatter}
\Cref{cor:semigroup} shows that, for $n=0$, the uncoupled system \cref{eq:evo_I}--\cref{eq:evo_V} has a unique solution given through a group of isometries $G(t)$ with an infinitesimal generator defined in \cref{prop:inf-generator}. We now turn our attention to the full matrix-valued transport equation \cref{eq:rte_W}. 
Furthermore, via the representation of Hermitian matrices $\HM$ in terms of the Stokes parameters, see \cref{eq:Stokes}, we can identify elements of $L^p(\bbD,\HM)$ with elements of $L^p(\bbD)^4$.
%
% To extend the solution of the scalar equation to one of the matrix equation we define, in slight abuse of notation, the operator $\MA \colon D(\MA)\subset(L^{p}(\bbD)^4\to L^{p}(\bbD)^4$ (and thus $L^p(\bbD,\HM)\to L^p(\bbD,\HM)$) as
% \[\MA\begin{pmatrix}I\\Q\\U\\V\end{pmatrix}=\begin{pmatrix}[I,H]\\ [Q,H]\\ [U,H]\\ [V,H]\end{pmatrix},\]
% with 
% \begin{align}
%     \label{eq:inf-generatorMatrix}
%     D(\MA)=\left\{
%     \begin{pmatrix}
%     I\\Q\\U\\V
%     \end{pmatrix}\in (L^p(\bbD))^4: \begin{pmatrix}[I,H]\\ [Q,H]\\ [U,H]\\ [V,H]\end{pmatrix}\in (L^p(\bbD))^4\right\}.
% \end{align}
This identification motivates the definition of the operator $\MA \colon D(\MA)\subset L^p(\bbD,\HM)\to L^p(\bbD,\HM)$,
\[\MA\MW \colonequals [\MW,H]=\begin{pmatrix} [W_{1,1},H] & [W_{1,2},H]\\ [W_{1,2}^*,H] & [W_{2,2},H]\end{pmatrix}\quad \text {for } \MW = \begin{pmatrix} W_{1,1} & W_{1,2}\\ W_{1,2}^* & W_{2,2}\end{pmatrix},\]
with domain
\begin{align}
    \label{eq:inf-generatorMatrix}
    D(\MA) \colonequals\left\{
    \MW \in L^p(\bbD,\HM) :  W_{i,j} \in W^p(\bbD) \text{ for } 1\leq i\leq j\leq 2\right\}.
\end{align}
Therefore, by extension of \cref{prop:inf-generator}, $\MA$ is the infinitesimal generator of a strongly-continuous group of contractions in $L^p(\bbD,\HM)$. To show the existence of a unique solution to \cref{eq:rte_W} we are thus left with proving that the remaining terms in \cref{eq:rte_W} can be treated as bounded perturbations of the infinitesimal generator $\MA$.

\subsection{Analysis of the coupling and scattering operators}
 We start with the analysis of the coupling operator $N(\MU)$ defined in \cref{eq:Coupling}. 
 %Boundedness of this operator can be shown by expressing the norm on $L^p(\bbD,\HM)$ in the Stokes parameters.
\begin{lemma}
\label{lemma:coupling}
For $1\leq p <\infty$, the coupling operator $N\colon L^p(\bbD,\HM)\to L^p(\bbD,\HM)$ defined by $N(\MU) \colonequals n(J\MU-\MU J)$ is linear and bounded with
\begin{align*}
    \|N(\MU)\|_{L^p(\bbD,\HM)} \leq 2\|n\|_{L^\infty(\bbD)}\|\MU\|_{L^p(\bbD,\HM)}.
\end{align*}
\end{lemma}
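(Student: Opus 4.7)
The plan is routine and proceeds in three short steps. Linearity of $N$ is immediate, since $\MU\mapsto J\MU-\MU J$ is linear and pointwise multiplication by the scalar function $n$ preserves linearity. It is also worth noting (for the operator to actually map into $L^p(\bbD,\HM)$) that $N(\MU)(x,k)$ is Hermitian whenever $\MU(x,k)$ is: using $J^*=-J$ and $\MU^*=\MU$, one computes $(J\MU-\MU J)^* = \MU^*J^* - J^*\MU^* = -\MU J + J\MU$.

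The key ingredient for the norm bound is that $J$ is unitary. Indeed, $J^2 = -I_2$, so $J^*J = -J^2 = I_2$. Unitary invariance of the Schatten $p$-norm therefore gives the pointwise identities
\begin{equation*}
\|J\MU(x,k)\|_{S^p} = \|\MU(x,k)\|_{S^p} = \|\MU(x,k)J\|_{S^p}
\end{equation*}
for a.e.\ $(x,k)\in\bbD$ and every $p\in[1,\infty)$. Combining this with the triangle inequality in $S^p$ and the pointwise bound $|n(x,k)|\leq \|n\|_{L^\infty(\bbD)}$ yields the pointwise estimate
\begin{equation*}
\|N(\MU)(x,k)\|_{S^p} \leq |n(x,k)|\bigl(\|J\MU(x,k)\|_{S^p}+\|\MU(x,k)J\|_{S^p}\bigr) \leq 2\|n\|_{L^\infty(\bbD)}\|\MU(x,k)\|_{S^p}.
\end{equation*}

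Finally, raising to the $p$-th power and integrating over $\bbD$ and taking the $p$-th root gives the asserted inequality
\begin{equation*}
\|N(\MU)\|_{L^p(\bbD,\HM)} \leq 2\|n\|_{L^\infty(\bbD)}\|\MU\|_{L^p(\bbD,\HM)}.
\end{equation*}
There is no genuine obstacle in this argument; the only point that deserves being stated explicitly is the unitarity of $J$, which is what reduces the estimate to the triangle inequality in the Schatten norm. The proof is essentially complete after these three observations.
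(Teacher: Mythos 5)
Your proof is correct and follows essentially the same route as the paper's: both rest on $J^*J=I_2$, the resulting unitary invariance of the Schatten norm (so $J\MU$, $\MU J$ and $\MU$ share singular values), and the triangle inequality, with the $L^\infty$ bound on $n$ pulled out front. Your explicit verification that $N(\MU)$ is Hermitian is a detail the paper leaves implicit, but it does not change the argument.
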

\begin{proof}
Linearity is obvious. To see boundedness, first note that
\begin{equation*}
    \|N(\MU)\|_{L^p(\bbD,\HM)} \leq \|n\|_{L^\infty(\bbD)}\|J\MU-\MU J\|_{L^p(\bbD,\HM)}.
\end{equation*} 
Since $J^*J=I_2$, the singular values of $J\MU$ and $\MU J$ coincide with those of $\MU$, whence
the assertion follows by the triangle inequality.
%
% Any matrix $\MU \in L^p(\bbD,\HM)$ can be written in terms of the Stokes parameters as
% \begin{align*}
%     \MU=\frac{1}{2}\begin{pmatrix}I + Q & U + i V\\ U - i V & I-Q \end{pmatrix}.
% \end{align*}
% Let us abbreviate $a=|I|$ and $b=(Q^2 + U^2 + V^2)^{1/2}$. Using \cref{eq:sing_U}, the Schatten norm of $\MU$ is then given by
% \begin{align*}
%     \|\MU\|_{S^p}^p &= \Big(\frac{a^2+b^2}{4} +\frac{ab}{2}\Big)^{p/2}  + \Big(\frac{a^2+b^2}{4} -\frac{ab}{2}\Big)^{p/2}\\
%     &= \frac{1}{2^p} \Big( |a+b|^{p}  + |a-b|^p\Big).
% \end{align*}
% %
% By \cite[eq. 14]{Jameson2014} it holds that $|a+b|^p+|a-b|^p\geq 2c^p$ for $c=\max\{a,b\}$ and $p\geq 1$. 
% Thus it holds that
% \begin{align*}
%     \|\MU\|_{S^p}^p\geq \frac{1}{2^{p-1}}\left(Q^2+U^2\right)^{p/2}.
% \end{align*}
% Furthermore, we can write
% \begin{align*}
%     n\left(J\MU-\MU J\right) = n\begin{pmatrix}
%     U & -Q\\
%     -Q & -U
%     \end{pmatrix}.
% \end{align*}
% It then holds that
% \begin{align*}
%     \|N(\MU)\|_{S^p}^p = 2n^p(Q^2+U^2)^{p/2}.
% \end{align*}
% We thus have
% \begin{align*}
%     &\|N(\MU)\|_{L^p(\bbD,\HM)}^p = 2\int_\bbD n^p(Q^2+U^2)^{p/2}\dx\dk\\
%     &\leq 2^{p}\|n\|_{L^\infty(\bbD)}^p\int_\bbD \frac{1}{2^{p-1}}(Q^2+U^2)^{p/2}\dx\dk\leq 2^{p}\|n\|_{L^\infty(\bbD)}^p\|\MU\|_{L^p(\bbD,\HM)}^p.
% \end{align*}
\end{proof}
For the analysis of the scattering operators $S(\MU)$ and $\Sigma$ we require the following results, which might be interesting in their own rights.
\begin{lemma}
\label{lemma:AbsoluteEstimate}
For $\MU,\MS\in \CC^{n\times n}$, with $\MS$ invertible, and for $1\leq p \leq \infty$ it holds that
\begin{align*}
    \|\MS\MU \MS\|_{S^p} \leq \|\MS|\MU|\MS\|_{S^p}.
\end{align*}
\end{lemma}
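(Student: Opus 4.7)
The plan is to reduce, via the Jordan decomposition of $\MU$, to the classical Schatten bound $\|Y_+-Y_-\|_{S^p}\le\|Y_++Y_-\|_{S^p}$ for positive semidefinite $Y_\pm$, and to establish the latter via a $2\times 1$ block-matrix factorization combined with H\"older's inequality for Schatten norms.

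For the reduction, taking $\MU$ Hermitian (the setting relevant for the coherence matrices in this paper), I write $\MU = \MU_+-\MU_-$ with $\MU_\pm\succeq 0$ and $\MU_+\MU_-=0$, so that $|\MU|=\MU_++\MU_-$. Setting $Y_\pm\colonequals \MS\MU_\pm\MS$ then gives
\begin{align*}
\MS\MU\MS = Y_+-Y_-\quad\text{and}\quad \MS|\MU|\MS = Y_++Y_-,
\end{align*}
and when $\MS$ is additionally Hermitian one has $Y_\pm\succeq 0$ as congruences of the positive parts $\MU_\pm$.

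For the estimate, I introduce $R_\pm\colonequals Y_\pm^{1/2}$ and consider the $2n\times n$ matrices
\begin{align*}
P = \begin{pmatrix} R_+\\ R_-\end{pmatrix}\quad\text{and}\quad \widetilde P = \begin{pmatrix} R_+\\ -R_-\end{pmatrix}.
\end{align*}
A direct computation yields $P^*P = \widetilde P^*\widetilde P = Y_++Y_-$ and $P^*\widetilde P = Y_+-Y_-$. H\"older's inequality for Schatten norms, $\|AB\|_{S^p}\le\|A\|_{S^{2p}}\|B\|_{S^{2p}}$, combined with the identity $\|X\|_{S^{2p}}^2 = \|X^*X\|_{S^p}$, then gives
\begin{align*}
\|Y_+-Y_-\|_{S^p} = \|P^*\widetilde P\|_{S^p}\le \|P\|_{S^{2p}}\|\widetilde P\|_{S^{2p}} = \|Y_++Y_-\|_{S^p}.
\end{align*}

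The main obstacle is extending this argument to the general invertible (non-Hermitian) $\MS$: in that case $Y_\pm=\MS\MU_\pm\MS$ are not manifestly positive semidefinite (indeed, not even Hermitian), so the clean Jordan-based reduction breaks down. The remedy is to polar-decompose $\MS = U|\MS|$ and use the unitary invariance of the Schatten norm to strip the outer $U$, reducing to a sandwich by the Hermitian $|\MS|$ of matrices involving the polar factor $U$; carefully tracking this factor through the Jordan decomposition, so that the positive/negative parts align correctly under the conjugation, is the technical crux that carries the Hermitian-$\MS$ argument over to the stated generality.
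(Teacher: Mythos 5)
Your core estimate is correct, and it goes by a genuinely different route than the paper. The paper uses the polar decomposition $\MU=\MV|\MU|$, factors $\MS\MU\MS=(\MS\MV\MS^{-1})(\MS|\MU|\MS)$, and applies H\"older's inequality together with the claim $\|\MS\MV\MS^{-1}\|_{S^\infty}=1$. You instead use the Jordan decomposition $\MU=\MU_+-\MU_-$ and the block factorization $P^*\widetilde P=Y_+-Y_-$, $P^*P=\widetilde P^*\widetilde P=Y_++Y_-$; the identities and the H\"older step $\|P^*\widetilde P\|_{S^p}\leq\|P\|_{S^{2p}}\|\widetilde P\|_{S^{2p}}=\|Y_++Y_-\|_{S^p}$ are all valid (also for $p=\infty$), so for Hermitian $\MU$ and Hermitian $\MS$ your proof is complete. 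It even has a concrete advantage over the paper's: it never uses invertibility of $\MS$, which matters because the only place the lemma is invoked, the proof of \cref{lemma:TraceHolder}, applies it with $\MS=S_T^{1/p}$ (singular whenever $T$ is) and with a Hermitian middle factor $\widetilde\MU$.

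The extension you defer to your last paragraph is, however, a genuine gap that cannot be closed: the lemma as stated, for arbitrary $\MU$ and arbitrary invertible $\MS$, is false. Take
\begin{align*}
\MS=\begin{pmatrix}0&2\\1&0\end{pmatrix},\qquad \MU=\begin{pmatrix}0&1\\1&0\end{pmatrix},
\end{align*}
so $\MU$ is Hermitian and unitary, $|\MU|=I_2$, and $\MS|\MU|\MS=\MS^2=2I_2$, while $\MS\MU\MS$ has singular values $4$ and $1$; hence $\|\MS\MU\MS\|_{S^p}=(4^p+1)^{1/p}>2^{1+1/p}=\|\MS|\MU|\MS\|_{S^p}$ for every $p\in[1,\infty]$. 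Your proposed remedy fails structurally at exactly the point you flag: stripping the polar factor $\MS=U|\MS|$ gives $\|\MS\MU\MS\|_{S^p}=\||\MS|\,\MU U\,|\MS|\|_{S^p}$ with a middle matrix $\MU U$ that is no longer Hermitian, and since $|\MU U|=U^*|\MU|U$ the argument would at best produce $\|\MS^*|\MU|\MS\|_{S^p}$, not $\|\MS|\MU|\MS\|_{S^p}$. Nor can the Hermiticity of the middle factor be dropped: $\MS=\operatorname{diag}(2,1)\succ 0$ and $\MU=\begin{pmatrix}0&1\\0&0\end{pmatrix}$ give $\|\MS\MU\MS\|_{S^p}=2>1=\|\MS|\MU|\MS\|_{S^p}$. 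Note that the paper's own proof breaks at the corresponding step: singular values equal the moduli of eigenvalues only for \emph{normal} matrices, so $\MS\MV\MS^{-1}$, though similar to a unitary, need not satisfy $\|\MS\MV\MS^{-1}\|_{S^\infty}=1$ (in the first example above this norm equals $2$). So your Hermitian--Hermitian version is the correct statement, and it is exactly the generality needed in \cref{lemma:TraceHolder}; state it that way rather than attempting the extension.
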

\begin{proof}
We can rewrite $\MU$ in terms of its polar decomposition \cite[p. 6]{Bhatia1997},
\begin{align*}
    \MU = \MV|\MU|,
\end{align*}
with $\MV\in \mathbb{C}^{n\times n}$ a unitary matrix. Then we can write
\begin{align*}
    \MS\MU \MS = \MS \MV|\MU|\MS = \MS \MV\MS^{-1}\MS|\MU|\MS.
\end{align*}
% We now prove the above inequality for general $p$ by an interpolation argument using the cases $p=1$ and $p=\infty$.
% \medskip
% $\mathbf{p}=1:$ It holds that
% \begin{align*}
%     \|\MS\MU \MS\|_{S^1} = \|A^{-1}|\MU|\|_{S^1}
% \end{align*}
Using H\"older's inequality for Schatten norms \cite[p. 95]{Bhatia1997} it then holds that
\begin{align*}
    \|\MS \MV\MS^{-1}\MS|\MU|\MS\|_{S^p}\leq \|\MS \MV\MS^{-1}\|_{S^\infty}\|\MS|\MU|\MS\|_{S^p} = \|\MS|\MU|\MS\|_{S^p}.
\end{align*}
For the last inequality, we used that the matrix $\MS \MV\MS^{-1}$ can be diagonalized with eigenvalues on the unit circle in the complex plane, because $\MV$ is unitary, and that the singular values of a diagonalizable matrix equal the moduli of its eigenvalues.
% where the last equality follows from the fact that $\MV$ is a unitary matrix which has singular values one that are invariant under similarity transformations.
% \medskip
%
% $\mathbf{p}=\infty:$ By sub-multiplicativity of the Schatten norms and the fact that the singular values of unitary matrices are unity it holds that
% \begin{align*}
%     \|S\MU S\|_{S^\infty} = \|SAS^{-1}S|\MU |S\|_{S^\infty}\leq \|SAS^{-1}\|_{S^\infty}\cdot\|S|\MU|S\|_{S^\infty}  = \|S|\MU|S\|_{S^\infty}.
% \end{align*}
% The cases $1<p<\infty$ follow by interpolation. \myworries{Better explain what is meant by interpolation (Felix)}
\end{proof}
\begin{lemma}
    \label{lemma:TraceHolder}
    % \myworries{Use $\MT$}
    For Hermitian matrices $\MU,\MV\in \mathbb{C}^{n\times n}$, and an arbitrary matrix $T\in \mathbb{C}^{n\times n}$, and $1< p,q< \infty$ satisfying $ 1/p+1/q=1$, it holds that
    \begin{align*}
        \left|\Tr\left[T\MU T^*\MV\right]\right|\leq\Tr\left[T^*T|\MU|^p\right]^\frac{1}{p} \, \Tr\left[TT^*|\MV|^q\right]^\frac{1}{q}.
    \end{align*}
\end{lemma}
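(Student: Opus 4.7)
\medskip

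\noindent\textbf{Proof plan.} The strategy is to diagonalize $\MU$ and $\MV$ simultaneously in separate bases and thereby reduce the matrix inequality to a weighted scalar H\"older inequality.

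Since $\MU,\MV\in\mathbb{C}^{n\times n}$ are Hermitian, the spectral theorem gives orthonormal bases $\{u_1,\dots,u_n\}$ and $\{v_1,\dots,v_n\}$ of $\mathbb{C}^n$ and real eigenvalues $\mu_i,\nu_j\in\mathbb{R}$ with
\begin{equation*}
\MU = \sum_{i=1}^n \mu_i\, u_i u_i^*, \qquad \MV = \sum_{j=1}^n \nu_j\, v_j v_j^*,
\end{equation*}
from which $|\MU|^p = \sum_i |\mu_i|^p u_i u_i^*$ and $|\MV|^q = \sum_j |\nu_j|^q v_j v_j^*$. Introduce the matrix entries of $T$ between these two bases,
\begin{equation*}
t_{ji} := v_j^* T u_i, \qquad 1\le i,j\le n.
\end{equation*}

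Next, I would compute each of the three traces in the statement by expanding in the orthonormal bases. A direct calculation using cyclicity of the trace and the identities $T u_i = \sum_j t_{ji} v_j$ and $T^* v_j = \sum_i \overline{t_{ji}} u_i$ yields
\begin{equation*}
\Tr[T\MU T^*\MV] = \sum_{i,j} \mu_i\nu_j |t_{ji}|^2,
\end{equation*}
while the diagonal entries $u_i^* T^* T u_i = \|Tu_i\|^2 = \sum_j |t_{ji}|^2$ and $v_j^* T T^* v_j = \|T^*v_j\|^2 = \sum_i |t_{ji}|^2$ give
\begin{equation*}
\Tr[T^*T|\MU|^p] = \sum_{i,j} |\mu_i|^p |t_{ji}|^2, \qquad \Tr[TT^*|\MV|^q] = \sum_{i,j}|\nu_j|^q |t_{ji}|^2.
\end{equation*}

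With $w_{ij}:=|t_{ji}|^2\ge 0$ playing the role of a weight on $\{1,\dots,n\}^2$, the desired inequality reduces to the scalar weighted H\"older inequality: first by the triangle inequality, then by H\"older applied to the constant-in-$j$ sequence $(\mu_i)$ and the constant-in-$i$ sequence $(\nu_j)$,
\begin{equation*}
\Bigl|\sum_{i,j}\mu_i\nu_j w_{ij}\Bigr| \le \sum_{i,j}|\mu_i||\nu_j| w_{ij} \le \Bigl(\sum_{i,j}|\mu_i|^p w_{ij}\Bigr)^{1/p} \Bigl(\sum_{i,j}|\nu_j|^q w_{ij}\Bigr)^{1/q},
\end{equation*}
which is exactly the claim.

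The only nontrivial step is the bookkeeping of Step~2, specifically recognizing that $\Tr[T^*T|\MU|^p]$ and $\Tr[TT^*|\MV|^q]$ are precisely the $p$- and $q$-weighted sums of the same nonnegative weights $|t_{ji}|^2$ that appear in the expansion of $\Tr[T\MU T^*\MV]$. Once this is in place, the inequality is no longer genuinely matricial and follows from the standard scalar H\"older inequality; in particular no rearrangement or Araki--Lieb--Thirring type tool is required.
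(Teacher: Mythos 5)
Your proof is correct, and it takes a genuinely different and more elementary route than the paper. The paper proves this lemma by taking a singular value decomposition $T=U_TS_TV_T^*$, splitting $S_T=S_T^{1/p}S_T^{1/q}$, applying H\"older's inequality for Schatten norms, and then invoking two further matrix tools: the polar-decomposition estimate $\|\MS\MU\MS\|_{S^p}\leq\|\MS|\MU|\MS\|_{S^p}$ (its \cref{lemma:AbsoluteEstimate}) and the Lieb--Thirring inequality. You instead exploit the Hermiticity of $\MU$ and $\MV$ directly: diagonalizing each in its own orthonormal eigenbasis and writing $w_{ij}=|v_j^*Tu_i|^2$, all three traces become weighted sums over the same nonnegative weights, and the claim collapses to the scalar weighted H\"older inequality --- no Schatten-norm machinery and no Araki--Lieb--Thirring input at all. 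Your computations check out: cyclicity gives $\Tr[T\MU T^*\MV]=\sum_{i,j}\mu_i\nu_j w_{ij}$, and Parseval gives $\|Tu_i\|^2=\sum_j w_{ij}$ and $\|T^*v_j\|^2=\sum_i w_{ij}$, which are exactly the weights appearing in $\Tr[T^*T|\MU|^p]$ and $\Tr[TT^*|\MV|^q]$. Two further points in your favor: your argument handles the endpoints uniformly, since for Hermitian $\MV$ one has $\sigma_1(\MV)=\max_j|\nu_j|$, so taking $q=\infty$ in the scalar H\"older step immediately recovers the paper's separate \cref{lemma: TraceInequality}; and you sidestep a subtlety in the paper's route, namely that \cref{lemma:AbsoluteEstimate} is stated for invertible $\MS$, whereas $S_T^{1/p}$ is singular when $T$ is. What the paper's approach buys instead is generality and reusable structure: its intermediate matrix inequalities do not rely on simultaneous spectral expansions and are presented as results of independent interest, whereas your reduction is tied to the Hermitian (or normal) setting --- which is, however, all the statement requires.
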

\begin{proof}
Consider the singular value decomposition $T=U_TS_TV_T^*$ with a real non-negative diagonal matrix $S_T$ and unitary matrices $U_T$ and $V_T$. Then, since the trace is invariant under cyclic permutations, and by using Hölder's inequality for Schatten norms it holds that
\begin{align*}
    \left|\Tr\left[T\MU T^*\MV\right]\right|=\left|\Tr\left[U_T S_TV_T^* \MU V_T S_T U_T^* \MV\right]\right|=\left|\Tr\left[S_TV_T^* \MU V_T S_T U_T^* \MV U_T\right]\right|\\
=\left|\Tr\left[ \left(S^{1/p}_T\widetilde \MU S^{1/p}_T\right)\left(S^{1/q}_T \widetilde\MV S^{1/q}_T\right)\right]\right|\leq \big\|S^{1/p}_T\widetilde \MU S^{1/p}_T\big\|_{S^p} \big\|S^{1/q}_T \widetilde\MV S^{1/q}_T\big\|_{S^q},
\end{align*}
with $\widetilde \MU=V_T^* \MU V_T$ and $\widetilde\MV=U_T^* \MV U_T$.
% From here we restrict our discussion to the first term containing $\widetilde{U}$. 
By applying \cref{lemma:AbsoluteEstimate}, and subsequently using the Lieb-Thirring inequality \cite{Lieb1976}
we obtain
\begin{align*}
    &\big\|S^{1/p}_T\widetilde \MU S^{1/p}_T\big\|_{S^p}= \Tr\left[ \left|S^{1/p}_T\widetilde \MU S^{1/p}_T\right|^p\right]^\frac{1}{p}\leq\Tr\left[\left(S_T^{1/p}|\widetilde{\MU}|S_T^{1/p}\right)^p\right]^\frac{1}{p}
    \leq\Tr\left[ S_T|\widetilde \MU|^p S_T\right]^\frac{1}{p}.
\end{align*}
Since $V_T$ is unitary, it holds that
\begin{align*}
    |\widetilde{\MU}|^2 = V_T^*|\MU|^2V_T = \left(V_T^*|\MU|V_T\right)^2,
\end{align*}
i.e., $|\widetilde \MU| = V_T^*|\MU|V_T$. 
% Consider the eigensystem $(u_i,\lambda_i)$ of $\MU^*\MU$, then $|\MU|=\sum_i |\lambda_i|^{1/2} u_i\otimes u_i$, and $|\widetile\MU|= V_T^* |\MU| V_T =\sum_i |\lambda_i|^{1/2} (V_T^*u_i)\otimes (V_T^*u_i)$. Since $u_i^*u_j=\delta_{i,j}$, we obtain $(V_T^*u_i)^*(V_T^*u_j)=u_i^* V_T V_T^* u_j = \delta_{i,j}$, because $V_T$ is unitary. Thus, we explicitly derived the spectral representation of $|\widetile\MU|$ with eigenvalues $|\lambda_i|^{1/2}\geq 0$ and corresponding eigenvectors $V_^*u_i$. Since, $|\MU|^p=\sum_i |\lambda_i|^{p/2} u_i\otimes u_i$ and $|\widetile\MU|^p =\sum_i |\lambda_i|^{p/2} (V_T^*u_i)\otimes (V_T^*u_i)$ we obtain the assertion.
By the spectral theorem it then holds that $|\widetilde{\MU}|^p = V_T^*|\MU|^pV_T$ and we can write
\begin{align*}
    \Tr\left[ S_T|\widetilde \MU|^p S_T\right]^\frac{1}{p}= \Tr\left[ S_TV_T^*|\MU|^pV_T S_T\right]^\frac{1}{p} =\Tr\left[T^*T|\MU|^p\right]^\frac{1}{p}.
\end{align*}
The term containing $\MV$ follows similarly, from which we obtain the required result.
\end{proof}
The $p=1$ and $p=\infty$ cases have to be treated separately as is done in the following
\begin{lemma}
\label{lemma: TraceInequality}
% \myworries{Use $\MT$.}
 For Hermitian matrices $\MU,\MV\in \mathbb{C}^{n\times n}$, and an arbitrary matrix $T\in \mathbb{C}^{n\times n}$, it holds that
\begin{align*}
    &\left|\Tr\left[T\MU T^*\MV\right]\right|\leq\Tr\left[T^*T|\MU|\right]\cdot\sigma_{1}(\MV),\\
    &\left|\Tr\left[T\MU T^*\MV\right]\right|\leq\sigma_{1}(\MU)\cdot\Tr\left[TT^*|\MV|\right].
\end{align*}
\end{lemma}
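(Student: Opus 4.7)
The plan is to mimic the proof of Lemma \ref{lemma:TraceHolder} but replace the interior Hölder pair $(p,q)$ with the endpoint pairs $(1,\infty)$ and $(\infty,1)$. As in that proof, I first write the SVD $T=U_T S_T V_T^*$ and use cyclic invariance of the trace to reduce the expression to
\[
\Tr[T\MU T^*\MV] = \Tr\bigl[S_T \widetilde{\MU}\, S_T\, \widetilde{\MV}\bigr],
\]
where $\widetilde{\MU} := V_T^*\MU V_T$ and $\widetilde{\MV} := U_T^*\MV U_T$ are Hermitian (since $\MU,\MV$ are), and $\sigma_1(\widetilde{\MU}) = \sigma_1(\MU)$, $\sigma_1(\widetilde{\MV}) = \sigma_1(\MV)$ by unitary invariance of singular values.

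For the first inequality, I group the product as $(S_T \widetilde{\MU} S_T)\cdot \widetilde{\MV}$ and apply Hölder for Schatten norms with exponents $(1,\infty)$, obtaining
\[
|\Tr[S_T \widetilde{\MU} S_T \widetilde{\MV}]| \le \|S_T \widetilde{\MU} S_T\|_{S^1}\,\|\widetilde{\MV}\|_{S^\infty}.
\]
Lemma \ref{lemma:AbsoluteEstimate} (with $p=1$) then yields $\|S_T \widetilde{\MU} S_T\|_{S^1} \le \|S_T |\widetilde{\MU}| S_T\|_{S^1}$. Since $S_T |\widetilde{\MU}| S_T \succeq 0$, its Schatten 1-norm is just its trace, and by cyclic invariance together with $|\widetilde{\MU}| = V_T^* |\MU| V_T$ and $T^*T = V_T S_T^2 V_T^*$, this trace equals $\Tr[T^*T |\MU|]$. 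Combined with $\|\widetilde{\MV}\|_{S^\infty} = \sigma_1(\MV)$, this gives the first estimate. The second estimate is obtained symmetrically: cyclically permute to $\Tr[\widetilde{\MU}\cdot(S_T \widetilde{\MV} S_T)]$, apply Hölder with $(\infty,1)$ to extract $\sigma_1(\MU)=\|\widetilde{\MU}\|_{S^\infty}$, and bound the remaining factor analogously using $TT^* = U_T S_T^2 U_T^*$.

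The only real subtlety is that Lemma \ref{lemma:AbsoluteEstimate} as stated requires $\MS=S_T$ to be invertible, which fails if $T$ is not of full rank. I expect this to be the main obstacle to rigour, but not a deep one: it can be handled by replacing $T$ with $T_\varepsilon := U_T(S_T + \varepsilon I)V_T^*$, applying the inequalities to the invertible perturbations $T_\varepsilon$, and passing to the limit $\varepsilon \downarrow 0$ by continuity of both sides in $T$. Note that, in contrast to the interior case in Lemma \ref{lemma:TraceHolder}, no appeal to the Lieb--Thirring inequality is needed, since at the endpoints the exponent on $S_T$ is either $1$ or $0$ and the reduction to $\Tr[S_T |\widetilde{\MU}| S_T]$ (or the $\MV$-analogue) is immediate.
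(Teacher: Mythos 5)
Your proof is correct and takes essentially the same route as the paper's: SVD of $T$, cyclic invariance of the trace, endpoint H\"older with exponents $(1,\infty)$ (resp.\ $(\infty,1)$), and reduction of $\|S_T\widetilde{\MU}S_T\|_{S^1}$ to $\Tr[T^*T|\MU|]$ via $|\widetilde{\MU}|=V_T^*|\MU|V_T$. Your $\varepsilon$-perturbation to handle a non-invertible $S_T$ in \cref{lemma:AbsoluteEstimate} is a sound patch of a detail the paper leaves implicit --- indeed the paper even writes that step as an equality, whereas in general only the inequality $\|S_T\widetilde{\MU}S_T\|_{S^1}\leq\Tr\bigl[S_T|\widetilde{\MU}|S_T\bigr]$ holds, exactly as you argue.
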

\begin{proof}
As in the proof of \cref{lemma:TraceHolder} we use the the singular value decomposition of $T$ and Hölders inequality for Schatten norm to obtain
\begin{align*}
    \left|\Tr\left[T\MU T^*\MV\right]\right|&\leq\|S_T\widetilde \MU S_T\|_{S^1} \|\widetilde\MV \|_{S^\infty}\\
    &=\Tr\left[S_TV_T^*|\MU|V_TS_T\right] \sigma_{1}(U_T^*\MV U_T)\\
    &=\Tr\left[T^*T|\MU|\right] \sigma_{1}(\MV).
\end{align*}
The second inequality follows in a similar fashion.
\end{proof}
Using \Cref{lemma:TraceHolder} and \Cref{lemma: TraceInequality}, we obtain the following key inequality for the scattering operator.
% With this result we can make the following estimate relating the scattering operators $S$ and $\Sigma$:
\begin{proposition}
    \label{proposition: ScatterBound}
    Let $\MU\in L^p(\bbD,\HM), \MV\in L^q(\bbD,\HM), 1 \leq p,q\leq \infty$, such that $1/p+1/q=1$. Then the following inequality holds,
    \begin{align*}
        \int_{\bbD}\left|\Tr\left[S(\MU)\MV\right]\right| \dx\dk\leq \|\Sigma^{1/p}\MU\|_{L^p(\bbD,\HM)} \|\Sigma^{1/q}\MV\|_{L^q(\bbD,\HM)}.
    \end{align*}
\end{proposition}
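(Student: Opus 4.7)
The plan is to chain two applications of trace inequalities with two applications of Hölder's inequality: first pointwise in $(x,k,k')$, then for the $k'$ integral against $\sigma\,d\lambda$, and finally for the integral over $(x,k)$. The key observation is that after these steps, one factor can be reduced to $\|\Sigma^{1/q}\MV\|_{L^q}$ by the normalization~\eqref{eq:normalization} applied with $\MV(x,k)$ independent of $k'$, while the other factor involves an integral over $(x,k,k')$ that can be rewritten, via Fubini and the identity $T(k,k')^*T(k,k')=T(k',k)T(k',k)^*$, as $\|\Sigma^{1/p}\MU\|_{L^p}^p$.

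I treat the case $1<p,q<\infty$ first. For fixed $(x,k,k')$ apply \cref{lemma:TraceHolder} to the matrices $\MU(x,k')$, $\MV(x,k)$, and $T(k,k')$ to obtain
\begin{align*}
\bigl|\Tr\!\left[T(k,k')\MU(x,k')T(k,k')^{*}\MV(x,k)\right]\bigr|
\leq \Tr\!\left[T^{*}T\,|\MU(x,k')|^{p}\right]^{1/p}\Tr\!\left[TT^{*}\,|\MV(x,k)|^{q}\right]^{1/q}.
\end{align*}
Multiply by $\sigma(x,k\cdot k')=\sigma^{1/p}\sigma^{1/q}$, integrate over $k'\in\bbS_{|k|}$, and apply Hölder's inequality on the sphere. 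Since $\MV(x,k)$ does not depend on $k'$, the normalization~\eqref{eq:normalization} collapses the $\MV$-factor to $\Sigma(x,k)^{1/q}\|\MV(x,k)\|_{S^{q}}$, and one is left with
\[
|\Tr[S(\MU)\MV](x,k)|\leq B(x,k)^{1/p}\,\Sigma(x,k)^{1/q}\|\MV(x,k)\|_{S^{q}},
\]
where $B(x,k):=\int_{\bbS_{|k|}}\sigma(x,k\cdot k')\Tr[T(k,k')^{*}T(k,k')|\MU(x,k')|^{p}]\,d\lambda(k')$.

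A second application of Hölder's inequality, now on $\bbD$ with exponents $p,q$, yields
\[
\int_{\bbD}|\Tr[S(\MU)\MV]|\,\dx\dk
\leq \left(\int_{\bbD}B(x,k)\,\dx\dk\right)^{1/p}\|\Sigma^{1/q}\MV\|_{L^{q}(\bbD,\HM)}.
\]
To handle the first factor I use Fubini to interchange the $k$-integral and the spherical $k'$-integral: parametrizing $k=r\omega$, $k'=r\omega'$ with $\omega,\omega'\in\bbS^{2}$ shows $\int_{\bbK}\int_{\bbS_{|k|}}\!\!=\int_{\bbK}\int_{\bbS_{|k'|}}\!\!$. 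Using $T(k,k')=T(k',k)^{*}$ I rewrite $T(k,k')^{*}T(k,k')=T(k',k)T(k',k)^{*}$, and then the normalization~\eqref{eq:normalization} (with the roles of $k$ and $k'$ exchanged, which is legitimate because $\sigma(x,k\cdot k')=\sigma(x,k'\cdot k)$) gives $\int_{\bbS_{|k'|}}\sigma(x,k\cdot k')\,T(k',k)T(k',k)^{*}\,d\lambda(k)=\Sigma(x,k')I_{2}$. Taking the trace against $|\MU(x,k')|^{p}$ yields $\int_{\bbD}B\,\dx\dk=\|\Sigma^{1/p}\MU\|_{L^{p}(\bbD,\HM)}^{p}$, which completes the argument.

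The endpoint cases $p=1,q=\infty$ and $p=\infty,q=1$ follow the same template but use \cref{lemma: TraceInequality} in place of \cref{lemma:TraceHolder}, so no Hölder step in $k'$ is needed: one of the matrix factors is immediately pulled out in $S^{\infty}$-norm (controlled by the $L^{\infty}$-norm) while the other integrates, via the normalization and the Fubini swap described above, to $\|\Sigma\,\MU\|_{L^{1}}$ or $\|\Sigma\,\MV\|_{L^{1}}$. The main bookkeeping difficulty, and the place where care is required, is the Fubini swap combined with the identity $T(k,k')^{*}T(k,k')=T(k',k)T(k',k)^{*}$ so that the normalization~\eqref{eq:normalization} can be invoked a second time to produce a $\Sigma$-weight on $\MU$.
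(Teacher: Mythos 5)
Your proof is correct and follows essentially the same route as the paper's: the pointwise trace-H\"older bound from \cref{lemma:TraceHolder} (resp.\ \cref{lemma: TraceInequality} at the endpoints), H\"older's inequality, a Fubini swap of the $k$- and $k'$-integrations, and the normalization condition \cref{eq:normalization} applied twice via $T(k,k')^*T(k,k')=T(k',k)T(k',k)^*$. The only difference is organizational: you apply H\"older in two stages (first on the sphere, collapsing the $\MV$-factor, then over $\bbD$), whereas the paper applies it in one step over the product measure, which is mathematically equivalent.
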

\begin{proof}
Let $1<p<\infty$.
By definition of $S$, we have that
\begin{align*}
    &\int_\bbD\left|\Tr\left[S(\MU)\MV\right]\right|\dx\dk \\
    &= \int_\bbX\int_\bbK\Big|\Tr\Big[\int_{\bbS_{|k|}}\sigma(x,k\cdot k')T(k,k')\MU(x,k')T(k',k)\MV(x,k)\ddd{k'}\Big]\Big|\dx\dk
    % \\
    % &\leq\int_\bbX\int_\bbK\int_{\bbS_{|k|}}\sigma(x,k\cdot k')\Big|\Tr\Big[T(k,k')\MU(x,k')T(k',k)\MV(x,k)\Big]\Big|\ddd{k'}\dx\dk\\
    % &\leq\int_\bbX\int_\bbK\int_{\bbS_{|k|}}\sigma(x,k\cdot k')\Tr\Big[T(k',k)T(k,k')|\MU|^p(x,k')\Big]^\frac{1}{p}\\
    % &\qquad\qquad \qquad\qquad \qquad \Tr\Big[T(k,k')T(k',k)|\MV|^q(x,k)\Big]^\frac{1}{q}\ddd{k'}\dx\dk.
\end{align*}
Taking the trace into the inner integral and using \cref{lemma:TraceHolder}, i.e.,
\begin{align*}
   \Tr\Big[T(k,k')\MU(x,k')T(k',k)\MV(x,k)\Big]\Big|\leq  &\Tr\Big[T(k',k)T(k,k')|\MU|^p(x,k')\Big]^\frac{1}{p}\\
    &\Tr\Big[T(k,k')T(k',k)|\MV|^q(x,k)\Big]^\frac{1}{q}.
\end{align*}
H\"older's inequality and Fubini's theorem yield that
% By applying consecutive H\"older inequalities to all integrals, we obtain
% \begin{align*}
%     \int_\bbD|\Tr\big[S(\MU)\MV\big]|&\dx\dk \leq\\
%     &\Bigg(\int_\bbX\int_\bbK\int_{\bbS_{|k|}}\Tr\Big[\sigma T(k',k)T(k,k')|\MU|^p(x,k')\Big]\ddd{k'}\dk\dx\Bigg)^\frac{1}{p}\\
%     &\Bigg(\int_\bbX\int_\bbK\int_{\bbS_{|k|}}\Tr\Big[\sigma T(k,k')T(k',k)|\MV|^q(x,k)\Big]\ddd{k'}\dk\dx\Bigg)^\frac{1}{q}.
% \end{align*}
% By applying Fubini's theorem to the first integral and interchanging the inner integral and the trace operator we thus have that
\begin{align*}
    \int_\bbD|\Tr\big[S(\MU)\MV\big]|&\dx\dk \leq\\
    &\Bigg(\int_\bbX\int_\bbK\Tr\Big[\int_{\bbS_{|k|}}\sigma(x,k\cdot k')T(k',k)T(k,k')\dk |\MU|^p(x,k')\Big] \ddd{k'} \dx \Bigg)^\frac{1}{p} \\
    &\Bigg(\int_\bbX\int_\bbK\Tr\Big[\int_{\bbS_{|k|}}\sigma(x,k\cdot k')T(k,k')T(k',k)\ddd{k'}|\MV|^q(x,k)\Big]\dk\dx\Bigg)^\frac{1}{q}.
\end{align*}
In view of the normalization condition \cref{eq:normalization}, we further deduce that
\begin{align*}
    &\int_\bbD|\Tr\big[S(\MU)\MV\big]|\dx\dk \leq\\
    &\Bigg(\int_\bbX\int_\bbK\Tr\Big[\Sigma(x,k')|\MU|^p(x,k')\Big]\ddd{k'}\dx\Bigg)^\frac{1}{p} \Bigg(\int_\bbX\int_\bbK\Tr\Big[\Sigma(x,k)|\MV|^q(x,k)\Big]\dk\dx\Bigg)^\frac{1}{q}\\
    &=\|\Sigma^{1/p}\MU\|_{L^p(\bbD,\HM)}\cdot\|\Sigma^{1/q}\MV\|_{L^q(\bbD,\HM)}.
\end{align*}
For the case $p=1$, we use \cref{lemma: TraceInequality} to obtain the estimate
\begin{align*}
    &\int_\bbD|\Tr\big[S(\MU)\MV\big]|\dx\dk \leq\\
    &\int_\bbX\int_\bbK\int_{\bbS_{|k|}}\sigma(x,k\cdot k')\Tr\Big[T(k',k)T(k,k')|\MU|(x,k')\Big]\, \sigma_{1}(\MV(x,k))\ddd{k'}\dx\dk.
\end{align*}
By then applying H\"older, Fubini and the normalization condition \cref{eq:normalization} just as for the case $p>1$ we obtain
\begin{align*}
    \int_\bbD|\Tr\big[S(\MU)\MV\big]|\dx\dk \leq \|\Sigma\MU\|_{L^1(\bbD,\HM)} \|\MV\|_{L^\infty(\bbD,\HM)}.
\end{align*}
The case $p=\infty$ follows by interchanging the roles of $\MU$ and $\MV$ from the case $p=1$.
\end{proof}
Combining the previous result with a duality argument shows that the scattering operator is bounded.
\begin{proposition}
\label{proposition:Scattering}
For $\MU \in L^p(\bbD,\HM), 1\leq p\leq\infty$, it holds that
\begin{align*}
    \|S(\MU)\|_{L^p(\bbD,\HM)}\leq\|\Sigma\|_{L^\infty(\bbD)}\|\MU\|_{L^p(\bbD,\HM)}.
\end{align*}
\end{proposition}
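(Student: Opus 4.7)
The plan is to obtain the bound by a duality argument built on \cref{proposition: ScatterBound}.
First I would fix $1\leq p<\infty$ and let $q$ be its conjugate exponent.
The Schatten pairing $(\MU,\MV)\mapsto \int_{\bbD}\Tr[\MU\MV]\dx\dk$ identifies the dual of $L^{p}(\bbD,\HM)$ with $L^{q}(\bbD,\HM)$;
in particular, for any $\MZ\in L^{p}(\bbD,\HM)$,
\begin{align*}
    \|\MZ\|_{L^{p}(\bbD,\HM)} = \sup\Big\{\Big|\int_{\bbD}\Tr[\MZ\MV]\dx\dk\Big| : \MV\in L^{q}(\bbD,\HM),\ \|\MV\|_{L^{q}(\bbD,\HM)}\leq 1\Big\}.
\end{align*}
This reduces to the fact that the Schatten $q$-norm is dual to the Schatten $p$-norm via the trace pairing on Hermitian matrices, combined with the standard duality of Lebesgue–Bochner spaces.

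Applying this representation to $\MZ=S(\MU)$ and invoking \cref{proposition: ScatterBound}, I obtain
\begin{align*}
    \|S(\MU)\|_{L^{p}(\bbD,\HM)} \leq \sup_{\|\MV\|_{L^{q}(\bbD,\HM)}\leq 1}\|\Sigma^{1/p}\MU\|_{L^{p}(\bbD,\HM)}\,\|\Sigma^{1/q}\MV\|_{L^{q}(\bbD,\HM)}.
\end{align*}
Since $\Sigma\in L^{\infty}(\bbD)$, a pointwise estimate yields $\|\Sigma^{1/p}\MU\|_{L^{p}(\bbD,\HM)}\leq \|\Sigma\|_{L^{\infty}(\bbD)}^{1/p}\|\MU\|_{L^{p}(\bbD,\HM)}$ and similarly for the $\MV$ term with exponent $1/q$. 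Multiplying the two bounds gives the factor $\|\Sigma\|_{L^{\infty}(\bbD)}^{1/p+1/q}=\|\Sigma\|_{L^{\infty}(\bbD)}$, and taking the supremum over $\MV$ in the unit ball of $L^{q}(\bbD,\HM)$ closes the argument for $1\leq p<\infty$.

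For $p=\infty$, I would instead use the $p=1$ version of \cref{proposition: ScatterBound} with the roles of $\MU$ and $\MV$ reversed (or, equivalently, the second inequality of \cref{lemma: TraceInequality}), together with the dual representation
\begin{align*}
    \|S(\MU)\|_{L^{\infty}(\bbD,\HM)} = \sup_{\|\MV\|_{L^{1}(\bbD,\HM)}\leq 1}\Big|\int_{\bbD}\Tr[S(\MU)\MV]\dx\dk\Big|,
\end{align*}
obtaining the desired bound by the same $\Sigma$-absorption. The main obstacle I foresee is not in any calculation but in a clean invocation of the duality between $L^{p}(\bbD,\HM)$ and $L^{q}(\bbD,\HM)$ via the trace pairing restricted to Hermitian matrices; once that is taken for granted, \cref{proposition: ScatterBound} and the essential supremum of $\Sigma$ deliver the result immediately.
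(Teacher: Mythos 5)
Your proposal is correct and follows essentially the same route as the paper: the paper also represents $\|S(\MU)\|_{L^p(\bbD,\HM)}$ by duality against the unit ball of $L^q(\bbD,\HM)$ via the trace pairing, applies \cref{proposition: ScatterBound}, and absorbs the weights through $\|\Sigma^{1/p}\MU\|_{L^p}\leq\|\Sigma\|_{L^\infty}^{1/p}\|\MU\|_{L^p}$ (and analogously for $\MV$), handling $p=1$ and $p=\infty$ by the same role-reversal you describe. The one point you flag as an obstacle, the Hermitian-restricted Schatten duality, is likewise taken for granted in the paper.
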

\begin{proof}
Let $q$ be such that $1/p+{1}/{q}=1$. By duality, it holds that
\begin{align*}
    \|S(\MU)\|_{L^p(\bbD,\HM)} = \sup\int_{\bbD}|\Tr\left[S(\MU)\MV\right]|\dk\dx,
\end{align*}
where the supremum is taken over all $\MV\in L^q(\bbD,\HM)$ satisfying $\|\MV\|_{L^q(\bbD,\HM)} = 1$.
By \cref{proposition: ScatterBound}, it then follows for $1<p<\infty$ that
\begin{align*}
    \|S(\MU)\|_{L^p(\bbD,\HM)} \leq{}& \sup_{\|\MV\|\in L^q(\bbD,\HM) = 1}\|\Sigma^{1/p}\MU\|_{L^p(\bbD,\HM)}\|\Sigma^{1/q}\MV\|_{L^q(\bbD,\HM)}\\
    \leq{}& \|\Sigma\|_{L^\infty(\bbD)}\|\MU\|_{L^p(\bbD,\HM)}.
\end{align*}
The cases $p=1$ and $p=\infty$ follow with slight modifications.
\end{proof}
\subsection{Well-posedness of the radiative transfer equation}
According to the previous section, the scattering and coupling operators are bounded linear operators on $L^p(\bbD,\HM)$. Thus, the sum of these operators and $\MA$, defined in \cref{eq:inf-generatorMatrix}, generate a semigroup \cite[p. 76]{Pazy1983}, which, in turn, establishes well-posedness of \cref{eq:rte_W}.
In order to show that this semigroup acts on the cone of positive functions in $L^p(\bbD,\HM)$ we require the following result before stating an existence result.
\begin{lemma}
\label{lemma:ScatterPositivity}
The scattering operator $S \colon L^p(\bbD,\HM)\to L^p(\bbD,\HM)$ defined in \cref{eq:Scattering} preserves positivity; i.e., $S(\MU)\succeq 0$ for all $\MU\in L^p(\bbD,\HM)$ with $\MU \succeq 0$.
\end{lemma}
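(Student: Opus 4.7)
The plan is to argue pointwise: it suffices to show that for almost every $(x,k)\in\bbD$, the matrix $(S\MU)(x,k)$ is Hermitian and positive semidefinite whenever $\MU\succeq 0$. Recall that $S(\MU)(x,k)=\int_{\bbS_{|k|}}\sigma(x,k\cdot k')T(k,k')\MU(x,k')T(k,k')^*\,d\lambda(k')$, so the integrand is a nonnegatively weighted congruence of $\MU(x,k')$.

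First I would observe the elementary fact that if $\MU(x,k')\succeq 0$, then $T(k,k')\MU(x,k')T(k,k')^*\succeq 0$: for any $v\in\CC^2$, $v^*T(k,k')\MU(x,k')T(k,k')^*v = w^*\MU(x,k')w\geq 0$ with $w=T(k,k')^*v$. Hermiticity is preserved by conjugation as well, since $(T\MU T^*)^*=T\MU^*T^*=T\MU T^*$. Because $\sigma(x,k\cdot k')\geq 0$ by assumption, the integrand is a pointwise Hermitian and positive semidefinite $2\times 2$ matrix for almost every $k'\in\bbS_{|k|}$.

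Next I would argue that integrating preserves positivity: for any fixed $v\in\CC^2$, the real-valued function $k'\mapsto v^*\sigma(x,k\cdot k')T(k,k')\MU(x,k')T(k,k')^*v$ is nonnegative, hence its integral over $\bbS_{|k|}$ is nonnegative; this says exactly that $v^*(S\MU)(x,k)v\geq 0$. Hermiticity of $(S\MU)(x,k)$ is similarly inherited from the integrand by linearity of the adjoint. Since $\MU\in L^p(\bbD,\HM)$ and $S$ is bounded on $L^p(\bbD,\HM)$ by Proposition on scattering, $(S\MU)(x,k)$ is well-defined for a.e.\ $(x,k)$, and the pointwise positivity yields $S(\MU)\succeq 0$ in the sense of $L^p(\bbD,\HM)$.

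There is no real obstacle here; the statement is essentially a direct consequence of the definition of $S$, the nonnegativity of $\sigma$, and the fact that conjugation $\MU\mapsto T\MU T^*$ maps the cone of positive semidefinite matrices into itself. The only small care needed is to carry out the pointwise argument on a set of full measure, which is standard given that $\MU$ is Hermitian and positive almost everywhere by assumption.
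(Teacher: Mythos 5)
Your proposal is correct and matches the paper's proof essentially verbatim: both argue pointwise that the integrand $\sigma(x,k\cdot k')\,T(k,k')\MU(x,k')T(k,k')^*$ is positive semidefinite (a nonnegatively weighted congruence of $\MU(x,k')$) and then test $S(\MU)(x,k)$ against an arbitrary vector $\zeta\in\CC^2$, pulling $\zeta$ inside the integral. If anything, your congruence argument is marginally cleaner, since it makes explicit that positivity of $T\MU T^*$ needs no symmetry of $T$ at all, whereas the paper's proof invokes $T^*(k,k')=T(k,k')$.
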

\begin{proof}
Let $\MU\in L^p(\bbD,\HM)$ such that $\MU\succeq 0$ and define
\begin{align*}
    \widetilde{\MU}(x,k,k') \colonequals \sigma(x,k\cdot k')T(k,k')\MU(x,k') T^*(k,k'),
\end{align*}
with $\sigma$ and $T$ as given in \cref{eq:Scattering}. Since $T^*(k,k')=T(k,k')$ and $\sigma\geq 0$, we have that $\widetilde{\MU}(x,k,k')\succeq 0$. The assertion $S(\MU)\succeq 0$ then follows from
\begin{align*}
    \zeta^*S(\MU)(x,k)\zeta = \int_{\bbS_{|k|}} \zeta^* \widetilde{\MU}(x,k,k')\zeta \ddd{\lambda(k')}\geq 0\quad\text{for any } \zeta\in\mathbb{C}^2.
\end{align*}
\end{proof}
\begin{theorem}
\label{thrm: ExistenceFull}
Let $1\leq p <\infty$, and let $T>0$, $\MW_0\in L^p(\bbD,\HM)$, and $\MF\in C([0,T],L^p(\bbD,\HM))$, then the radiative transfer equation \cref{eq:rte_W} has a unique mild solution $\MW\in C([0,T],L^p(\bbD,\HM))$, satisfying
\begin{align*}
    \MW(0) =\MW_0.
\end{align*}
Furthermore, if $\MW_0$ and $\MF$ are positive, then so is $\MW$.
If $\MW_0\in D(\MA)$ and $\MF\in C^1([0,T],L^p(\bbD,\HM))$ then $\MW$ is a classical solution.
\end{theorem}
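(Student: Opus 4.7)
The plan is to treat the scattering, absorption, and coupling terms as a bounded perturbation of the unbounded generator $\MA$, placing the problem in the standard semigroup framework, and then to derive positivity via an operator-splitting (Trotter) argument.

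\textbf{Step 1: existence, uniqueness and regularity.} Set $\MB \MW \colonequals N(\MW) + S(\MW) - \Sigma\MW$. By \Cref{lemma:coupling}, \Cref{proposition:Scattering} and $\Sigma\in L^\infty(\bbD)$, $\MB\colon L^p(\bbD,\HM)\to L^p(\bbD,\HM)$ is linear and bounded. By the matrix-valued extension of \cref{prop:inf-generator} recorded above, $\MA$ generates a strongly continuous group of contractions on $L^p(\bbD,\HM)$, so the bounded perturbation theorem \cite{Pazy1983} ensures that $\MA+\MB$ generates a strongly continuous semigroup $\{\MT(t)\}_{t\geq 0}$ on $L^p(\bbD,\HM)$ with $D(\MA+\MB)=D(\MA)$. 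The variation of constants formula
\begin{equation*}
    \MW(t) \colonequals \MT(t)\MW_0 + \int_0^t \MT(t-s)\MF(s)\ddd{s}
\end{equation*}
then defines the unique mild solution in $C([0,T],L^p(\bbD,\HM))$. Under the additional assumptions $\MW_0\in D(\MA)$ and $\MF\in C^1([0,T],L^p(\bbD,\HM))$, standard semigroup regularity \cite{Pazy1983} upgrades $\MW$ to a classical solution.

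\textbf{Step 2: positivity.} Consider first $\MF=0$ and decompose $\MA+\MB=\MA+N+S+(-\Sigma)$. Each summand generates a semigroup on $L^p(\bbD,\HM)$ that preserves the positive cone: the group generated by $\MA$ acts as $(G(t)\MW)(x,k)=\MW(\phi_{-t}(x,k))$ and hence preserves positivity pointwise; $N$ acts pointwise as $n(x,k)[J,\cdot]$, and its associated semigroup is the congruence $\MU(x,k)\mapsto R_t(x,k)\MU(x,k)R_t(x,k)^{*}$ with $R_t(x,k)\colonequals\exp(tn(x,k)J)$, which is real orthogonal thanks to $J^{\top}=-J$, so it preserves positivity of Hermitian matrices; $e^{tS}=\sum_{k\geq 0}t^kS^k/k!$ preserves positivity because $S$ does, by \Cref{lemma:ScatterPositivity}; and multiplication by $e^{-t\Sigma(x,k)}$ manifestly preserves positivity. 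Trotter's product formula, which applies here because $\MA$ generates a $C_0$-semigroup and $N$, $S$, $-\Sigma$ are bounded, gives
\begin{equation*}
    \MT(t)\MW_0 = \lim_{n\to\infty}\bigl[G(t/n)\, e^{(t/n)N}\, e^{(t/n)S}\, e^{-(t/n)\Sigma}\bigr]^n \MW_0 \qquad \text{in } L^p(\bbD,\HM).
\end{equation*}
Since each factor preserves positivity and the positive cone in $L^p(\bbD,\HM)$ is closed, $\MT(t)\MW_0\succeq 0$ whenever $\MW_0\succeq 0$. For $\MF\succeq 0$ the integrand $\MT(t-s)\MF(s)$ is positive by the same argument, and integration preserves positivity, so $\MW(t)\succeq 0$.

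\textbf{Main obstacle.} Most of the technical work has already been absorbed into the preceding lemmas; existence, uniqueness and classical regularity reduce to standard bounded perturbation semigroup theory once $\MB$ has been shown to be bounded. The genuinely new step is the positivity assertion, whose crux is the construction of a splitting of $\MA+\MB$ all of whose factors preserve the Hermitian positive cone. The most delicate factor is $N$, where the skew-symmetry $J^{\top}=-J$ is essential: it forces $\exp(t n J)$ to be real orthogonal, which is precisely what makes the induced congruence transformation positivity-preserving.
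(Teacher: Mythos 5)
Your proposal is correct and follows essentially the same route as the paper: boundedness of $N$, $S$, $\Sigma$ plus the bounded perturbation theorem for existence, uniqueness and classical regularity, then Trotter's product formula with factorwise positivity of $e^{t\MA}$, $e^{tN}$, $e^{tS}$, $e^{-t\Sigma}$, concluding via the variation of constants formula. The only (minor) difference is your treatment of $e^{tN}$: you represent it as the pointwise unitary congruence $\MU \mapsto e^{tnJ}\,\MU\, e^{-tnJ}$ with $e^{tnJ}$ real orthogonal, whereas the paper writes out the equivalent rotation of the Stokes parameters $(Q,U)$ by the angle $2tn$ and observes that eigenvalues are preserved --- the two computations are the same argument in different coordinates.
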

% {\color{red} The structure/relation of the next Lemma and the following theorem are not so clear}
% \begin{lemma}
% \label{lemma:ExtendedSemigroup}
% Let $1\leq p <\infty$, and let $\MA$ be as defined in \cref{eq:inf-generatorMatrix}. Furthermore, let $N,S,\Sigma$ be the bounded linear operators defined in \cref{eq:Coupling}, \cref{eq:Scattering} and \cref{eq:normalization}, then the operator
% \begin{align*}
%     &\MB = \MA+N+S-\Sigma,\\
%     &D(\MB) = D(\MA),
% \end{align*}
% is the infinitesimal generator of a strongly continuous semigroup $L(t)$. Furthermore this semigroup operates in the cone of positive matrix-valued functions in $L^p(\bbD,\HM)$, i.e., for all $\MU\in L^p(\bbD,\HM)$ with $\MU\succeq 0$, it holds that $L(t)\MU\succeq 0$ for $t\geq 0$. 
% \end{lemma}
\begin{proof}
\textbf{Existence.}
In view of \cref{lemma:coupling} and \cref{proposition:Scattering} the operators $N,S$ and $\Sigma$ are bounded linear operators on $L^p(\bbD,\HM)$. Using semigroup theory, cf.\cite[p. 76]{Pazy1983} or \cite[p. 348]{DL5}, the perturbed operator $\MA+N+S-\Sigma$ with domain $D(\MA)$ generates a strongly continuous semigroup $L(t)$.
Thus, there exists a unique mild solution of \cref{eq:rte_W} for every initial condition $\MW_0\in L^p(\bbD,\HM)$, which becomes a classical solution if $\MW_0\in D(\MA)$ and $\MF\in C^1([0,T],L^p(\bbD,\HM))$ \cite[pp. 106-107]{Pazy1983}.
% Without the coupling, scattering and source terms the existence and uniqueness of a mild solution to \cref{eq:rte_W} is guaranteed in \cref{cor:semigroup} through the existence of an infinitesimal generator $\MA$ of a strongly continuous group of contractions.

\textbf{Positivity.}
% By \cref{lemma:ExtendedSemigroup} the scattering and coupling operators can be included a new infinitesimal generator $\MB$, and for  $\MF\in C([0,T],L^p(\bbD,\HM))$ there thus exists a unique mild solution of \cref{eq:rte_W} for every initial condition $\MW_0\in L^p(\bbD,\HM)$, which becomes a classical solution if $\MW_0\in D(\MB)$ and $\MF\in C^1([0,T],L^p(\bbD,\HM))$ \cite[p. 106-107]{Pazy1983}.
Trotters product formula, see, e.g., \cite[p. 464]{DL5}, implies that
\begin{align*}
    L(t) \MU= \lim_{m\to\infty}\left(e^{\frac{t}{m}\MA}e^{\frac{t}{m}N}e^{\frac{t}{m}S}e^{-\frac{t}{m}\Sigma}\right)^m \MU
\end{align*}
for all $\MU\in L^p(\bbD,\HM)$.
To show positivity of $L(t)$, it thus suffices to show positivity of $e^{t\MA},e^{tN},e^{tS}$ and $e^{-t\Sigma}$ separately. Positivity of $e^{t\MA}$ follows from \cref{lemma:Semigroup}. Furthermore, since $\Sigma$ is a function, positivity of $e^{-t\Sigma}$ follows from the identity
\begin{align*}
    \left(e^{-t\Sigma}\MU\right)(x,k) = e^{-t\Sigma(x,k)}\MU(x,k).
\end{align*}
By \cref{lemma:ScatterPositivity}, $S(\MU)\succeq 0$ if $\MU\succeq 0$, and positivity of $e^{tS}$ then follows from the exponential formula
\begin{align*}
    e^{tS}\MU = \sum_{m=0}^\infty\frac{t^m}{m!}S^m\MU.
\end{align*}
By decomposition of elements in $L^p(\bbD,\HM)$ into the Stokes parameters, the coupling operator $N$ can be identified with the mapping
\begin{align*}
    \begin{pmatrix} I \\ Q\\U\\V
    \end{pmatrix}
    \mapsto
    \begin{pmatrix}
    0 & 0 & 0 & 0\\
    0 & 0 & 2n & 0\\
    0 & -2n & 0 & 0\\
    0 & 0 & 0 &0
    \end{pmatrix}
    \begin{pmatrix} I \\ Q\\U\\V
    \end{pmatrix}.
\end{align*}
Denoting $c(t)\colonequals \cos(2 t n)$ and $s(t)\colonequals \sin( 2 t n)$,
the exponential of $N$ is then identified by the rotation
\begin{align*}
    \begin{pmatrix} I \\ Q\\U\\V
    \end{pmatrix}
    \mapsto
    \begin{pmatrix}
    1 & 0 & 0 & 0\\
    0 & c(t) & s(t) & 0\\
    0 & -s(t) & c(t) & 0\\
    0 & 0 & 0 &1
    \end{pmatrix}
    \begin{pmatrix} I \\ Q\\U\\V
    \end{pmatrix},
\end{align*}
or, equivalently, by rewriting the Stokes parameters into a matrix,
% In terms of matrices on $L^p(\bbD,\HM)$ the operator $e^{tN}$ can thus be written as
\begin{align*}
    e^{tN}: \begin{pmatrix}
    I+Q & U+iV\\
    U-iV & I-Q
    \end{pmatrix}
    \mapsto
    \begin{pmatrix}
    I+c(t)Q+s(t)U & c(t)U-s(t)Q+iV\\
    c(t)U-s(t)Q-iV & I-c(t)Q-s(t)U
    \end{pmatrix}.
\end{align*}
From this representation, we observe that $e^{tN}$ preserves the eigenvalues and thus positivity. From Trotters formula it can then be concluded that the semigroup $L(t)$ operates in the cone of positive function in $L^p(\bbD,\HM)$.
The assertion then follows from the variations of constants formula, cf. \cref{eq:var_of_constants}.
\end{proof}
% With this extended infinitesimal generator $\MB$ we can now show well-posedness of \cref{eq:rte_W}.

% \begin{theorem}
% \label{thrm: ExistenceFull}
% Let $1\leq p <\infty$, and let $\MB$ be the infinitesimal generator of the strongly continuous semigroup $L(t)$ as in \cref{lemma:ExtendedSemigroup}. Let $T>0$, $\MW_0\in L^p(\bbD,\HM)$, and $\MF\in C([0,T],L^p(\bbD,\HM))$, then the radiative transfer equation \cref{eq:rte_W} has a unique mild solution $\MW\in C([0,T],L^p(\bbD,\HM))$, satisfying
% \begin{align*}
%     \MW(0) =\MW_0.
% \end{align*}
% If $\MW_0\in D(\MB)$ and $\MF\in C^1([0,T],L^p(\bbD,\HM))$ then $\MW$ is a classical solution.
% Furthermore, if $\MW_0$ and $\MF$ are positive, then so is $\MW$. {\color{red}TODO: Formulation of positivity (for mild solution, already).}
% \end{theorem}
% \begin{proof}
% Without the coupling, scattering and source terms the existence and uniqueness of a mild solution to \cref{eq:rte_W} is guaranteed in \cref{cor:semigroup} through the existence of an infinitesimal generator $\MA$ of a strongly continuous group of contractions.

% By \cref{lemma:ExtendedSemigroup} the scattering and coupling operators can be included a new infinitesimal generator $\MB$, and for  $\MF\in C([0,T],L^p(\bbD,\HM))$ there thus exists a unique mild solution of \cref{eq:rte_W} for every initial condition $\MW_0\in L^p(\bbD,\HM)$, which becomes a classical solution if $\MW_0\in D(\MB)$ and $\MF\in C^1([0,T],L^p(\bbD,\HM))$ \cite[p. 106-107]{Pazy1983}.
% \end{proof}

\section{Liouville equation on a bounded domain}
\label{section: bounded-domain}
We consider the existence and uniqueness of solutions to Liouville's equation on domains $\bbD_b = \bbX_b\times\bbK$ with a bounded spatial component $\bbX_b$, which is the counterpart of \cref{eq:evo_poisson}--\cref{eq:evo_poisson_IC}.
Let us fix some notation.
We assume that $\partial \bbX_b$ is of class $C^1$.
Since $\bbK = \RR^3\backslash\{0\}$, the boundary of $\bbD_b$ is given by $\partial\bbD_b = (\partial \bbX_b\times\bbK)\cup(\bbX_b\times \{0\})$.
Since $(\bbX_b\times \{0\})$ is a set of measure zero, we will from this point on write, in slight abuse of notation, $\partial\bbD_b \colonequals \partial \bbX_b\times\bbK$.
Denoting by $\vec{n}$ the continuous unit outward normal on $\bbX_b$, we then define the in- and outflow boundary, denoted by $\Gamma_-$ and $\Gamma_+$, respectively, as
\begin{align}
    \label{eq:InflowBoundary}
    \Gamma_\pm \colonequals \{(x,k)\in \partial\bbD_b \colon \pm\nabla_kH(x,k)\cdot\vec{n}(x)>0\}.
\end{align}
Since $\nabla_k H(x,k)=\nu(x) k/|k|$ and $\nu(x)\geq \nu_{\min}>0$, we note that the tangential part of the boundary,
\begin{align*}
    \Gamma_0 \colonequals \{(x,k)\in \partial\bbD_b \colon \nabla_k H(x,k)\cdot\vec{n}(x)=0\},
\end{align*}
has measure zero.

% The plan for this section is similar to that of the previous sections. We start by investigating the Liouville equation, i.e., neglecting the coupling term $N$ and scattering, and treat these terms as bounded perturbations later on. In order to introduce nonhomogeneous boundary conditions on $\Gamma_-$, we will present suitable trace theorems, which extend those for constant velocities, stated, e.g., in \cite{DL6}.

Having fixed the notation, we study the following following boundary value problem in the forthcoming subsections,
\begin{alignat}{6}
   \frac{\partial u}{\partial t}+[H,u] &= q&\quad& \text{on }\bbD_b\times(0,T], \label{prob:LiouvilleBoundNonHom1}\\
     u &=  g& &\text{on }\Gamma_-\times(0,T], \label{prob:LiouvilleBoundNonHom2}\\
    u(0) &= u_0 &&\text{on }\bbD_b. \label{prob:LiouvilleBoundNonHom3}
\end{alignat}
Here $T>0$ is fixed, $g$ is an inflow boundary condition and $q$ is an internal source term. 
We will first consider homogeneous boundary conditions $g=0$, and later deal with the case $g\neq 0$ by using a suitable extension.

\subsection{Auxiliary tools}
We define for any point $(x,k)\in\bbD_b$ the travel times $\tau_-(x,k)$ and $\tau_+(x,k)$ as 
\begin{align}\label{eq:traveltime}
&\tau_\pm(x,k) \colonequals \sup\{t>0: \, \phi_{\pm s}(x,k)\in \bbD_b,\ 0\leq s<t\},%\\
% &\tau_+(x,k) \colonequals \sup\{t>0: \, \phi_{s}(x,k)\in \bbD_b,\ 0\leq s<t\},
\end{align}
with $\phi_t$ the flow map as defined in \cref{eq:flow_map}. This generalizes the usual notion of travel times in \cite[p.~221]{DL6}. The travel time $\tau_-(x,k)$ is the time a particle at $(x,k)$ needs to reach the inflow boundary $\Gamma_-$ along the flow $\phi_{-t}$, and $\tau_+(x,k)$ denotes the time a particle at $(x,k)$ needs to reach the outflow boundary $\Gamma_+$ along the flow $\phi_{t}$.

The characteristic curves defined by the flow map $\phi_t$ can be used to parametrize integrals over $\bbD_b$ by integration over the inflow boundary. 
We do not aim to prove that the characteristics cover $\bbD_b$, but consider
\begin{align}\label{def:U}
\bbU \colonequals \{(x,k)\in \bbD_b \colon (x,k) = \phi_t(x_0,k_0),\  (x_0,k_0)\in\Gamma_-, 0\leq t\leq \tau_+(x_0,k_0)\}.
\end{align}
Using a change of variables and that phase-space volume is preserved by $\phi_t$, we prove the next statement, cf. \cite[Lemma~2.1]{Stefanov:1999} for the case of constant $\nu$, for which $\bbU=\bbD_b$.
\begin{proposition}
\label{prop:TransformU}
Let $\bbU$ be defined in \cref{def:U}. Then, for all integrable functions $f\colon \bbU\to\RR$ it holds that
\begin{align}
    \label{eq:BoundaryTransform}
    \int_\bbU f(x,k)\dx\dk = \int_{\Gamma_-}\int_0^{\tau_+(x_0,k_0)} f(\phi_t(x_0,k_0))|\nabla_k H(x_0,k_0)\cdot \vec{n}(x_0)|\dt\ddd{\Gamma}.
\end{align}
% {\color{red}We may introduce the notation $\hat k = k/|k|$ and $| \nabla_k H(x_0,k_0)\cdot \vec{n}(x_0)|=v(x_0)|\hat k_0 \cdot \vec{n}(x_0)|$.}
\end{proposition}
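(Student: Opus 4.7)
The plan is to recognize the formula as the change-of-variables theorem applied to the map
\[
\Psi\colon E\to\bbU,\qquad \Psi(x_0,k_0,t) \colonequals \phi_t(x_0,k_0),
\]
where $E\colonequals \{(x_0,k_0,t): (x_0,k_0)\in\Gamma_-,\ 0\le t\le \tau_+(x_0,k_0)\}$. By the very definition of $\bbU$ in \cref{def:U}, $\Psi$ is surjective, and it remains to verify (i) that $\Psi$ is injective a.e.\ on $E$, and (ii) that its Jacobian determinant equals $|\nabla_k H(x_0,k_0)\cdot \vec n(x_0)|$ relative to the product of the surface measure on $\Gamma_-$ and Lebesgue measure in $t$.

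For (i), suppose $\phi_t(x_0,k_0)=\phi_s(x_0',k_0')$ with $(x_0,k_0),(x_0',k_0')\in\Gamma_-$ and $0<t\le s<\tau_+$. Applying $\phi_{-t}$ yields $(x_0,k_0)=\phi_{s-t}(x_0',k_0')$. If $s>t$, the characteristic starting at $(x_0',k_0')\in\Gamma_-$ would re-enter the inflow boundary at the positive forward time $s-t$, contradicting the definition of $\tau_+(x_0',k_0')$ together with the inflow condition $\nabla_k H\cdot \vec n<0$ on $\Gamma_-$. Hence $s=t$, and uniqueness of characteristics (\cref{lem:existence_XK}) forces $(x_0,k_0)=(x_0',k_0')$.

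For (ii), fix $(x_0,k_0)\in\Gamma_-\setminus\Gamma_0$ and take a local $C^1$-parametrization of $\partial\bbX_b$ near $x_0$; together with the identity chart on $\bbK$ this produces five linearly independent tangent vectors $\tau_1,\dots,\tau_5$ to $\Gamma_-$ at $(x_0,k_0)$ that span $T_{x_0}(\partial\bbX_b)\times\RR^3$. The columns of $D\Psi(x_0,k_0,t)$ are then $\{D\phi_t(x_0,k_0)\tau_j\}_{j=1}^5$ together with $\partial_t\Psi=(\nabla_k H,-\nabla_x H)|_{\phi_t(x_0,k_0)}$. Differentiating the group relation $\phi_{-t}\circ\phi_s=\phi_{s-t}$ at $s=t$ gives $D\phi_{-t}\cdot(\nabla_k H,-\nabla_x H)|_{\phi_t(x_0,k_0)}=(\nabla_k H,-\nabla_x H)|_{(x_0,k_0)}$. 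Combined with Liouville's theorem $|\det D\phi_t|=1$ (\cref{lemma:Liouville}) and multilinearity of the determinant, this yields
\[
|\det D\Psi| = \bigl|\det\bigl(\tau_1,\dots,\tau_5,\ (\nabla_k H,-\nabla_x H)|_{(x_0,k_0)}\bigr)\bigr|.
\]
Since $\{\tau_j\}_{j=1}^5$ spans $T_{x_0}(\partial\bbX_b)\times\RR^3$, only the component of the sixth vector orthogonal to this 5-plane contributes; that component is $\nabla_k H(x_0,k_0)\cdot \vec n(x_0)$ in the $x$-direction, producing exactly the claimed factor against the induced surface measure on $\Gamma_-$.

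The main obstacle will be technical rigor at the boundary of $E$: the function $\tau_+$ need not be smooth and may take the value $+\infty$, so $\Psi$ is only locally Lipschitz rather than globally $C^1$. The standard workaround is to exhaust $E$ by compact subsets on which $\tau_+$ is finite and continuous (leveraging that $\Gamma_0$ has measure zero and can therefore be excised), apply the Lipschitz change-of-variables formula on each such piece, and pass to the limit by monotone convergence for nonnegative $f$ and by linearity for general integrable $f$.
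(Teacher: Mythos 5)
Your proposal is correct and follows essentially the same route as the paper: a change of variables along the flow $\phi_t$ starting from $\Gamma_-$, with the Jacobian reduced via volume preservation of the Hamiltonian flow (Liouville) to a boundary determinant equal to $|\nabla_k H\cdot\vec{n}|$. The only difference is cosmetic rigor---you verify injectivity of the parametrization and sketch the exhaustion argument for non-smooth or infinite $\tau_+$, points the paper leaves implicit, and you transport the Jacobian to $t=0$ by factoring out $D\phi_t$ with $|\det D\phi_t|=1$ where the paper instead shows $\frac{d}{dt}\det(D\psi)=0$ via Liouville's formula and $\divergence F=0$.
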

\begin{proof}
% Define the domain $\bbU$ as
% \[\bbU \colonequals \{(x,k)\in \bbD_b \colon (x,k) = \phi_t(x_0,k_0),\  (x_0,k_0)\in\Gamma_-, 0\leq t\leq \tau_+(x_0,k_0)\}. \]
% Since $\bbX_b\subset\mathbb{R}^3$ is bounded and $\partial\bbX_b$ is of class $C^1$, we can parametrize points $x_0$ on the boundary $\partial \bbX_b$ as $x_0=\gamma(\alpha)$, with $\alpha\in \mathbb{R}^{2}$ and $\left|\frac{\partial \gamma}{\partial \alpha_i}\right|=1$. 
% Hence, e
Every point $(x,k)\in\bbU$ can be expressed as $(x,k)=\psi(t,x_0,k_0)=\phi_t(x_0,k_0)$ with $(x_0,k_0)\in \Gamma_-$ and $0<t<\tau_+(x_0,k_0)$. Using that $\partial\bbX_b$ is of class $C^1$, and using \cref{eq:flow_map}, we can parametrize $\partial\bbX_b$ such that the derivative of $\psi$ with respect to $x_0$ along $\Gamma_-$ satisfies
\begin{align*}
    (D_{x_0}\psi)_{\mid t=0} = \begin{pmatrix} \vec{\tau}_1^* & {\bf 0} \\ \vec{\tau}_2^* & {\bf 0}\end{pmatrix},
\end{align*}
with $\vec{\tau}_1$, $\vec{\tau}_2$ denoting the unit tangent vector field at $\partial\bbX_b$ oriented such that $\vec{\tau}_1 \times \vec{\tau}_2=\vec{n}$, and ${\bf 0}$ a zero row of length three.
Denoting $D\psi$ the Jacobian of $\psi$, a change of variables implies that
% For every point $(x,k)\in\bbU$ we can then apply a change of variables $(x,k)=\psi(\alpha,t,k_0)\colonequals\phi_t(\gamma(\alpha),k_0)$ with $(\gamma(\alpha),k_0)\in \Gamma_-$.
% Clearly this substitution is injective and it holds
% \todo{The following does not yet fit: Integration over $\Gamma$ vs $\alpha$}
\[\int_\bbU f(x,k)\dx\dk = \int_{\Gamma_-}\int_0^{\tau_+(x_0,k_0)} f(\phi_t(x_0,k_0))|\det(D\psi)|\dt\ddd{\Gamma}. \]
% Let $(\phi_x,\phi_k) = \phi_t(\gamma(\alpha),k_0)$, then the Jacobian determinant of this transformation is given by
% \begin{align*}
%     \det(D_{\alpha,t,k_0}\phi_t) = \begin{vmatrix}
%     D_\alpha \phi_x & D_\alpha \phi_k\\
%     D_t \phi_x & D_t \phi_k\\
%     D_{k_0}\phi_x & D_{k_0}\phi_k
%     \end{vmatrix}.
% \end{align*}
To evaluate $\det(D\psi)$, we first compute $\det(D\psi)$ for $t=0$. 
% Since the flow map can be written as in \cref{eq:flow_map} the Jacobian determinant at $t=0$ reduces to
Using \cref{eq:flow_map}, we obtain that
\[\det (D\psi)|_{t=0} = \det \begin{pmatrix}
(\nabla_k H)^* & -(\nabla_xH)^*\\
\vec{\tau}_1^*& {\bf 0}\\
\vec{\tau}_2^*& {\bf 0}\\
{\bf 0}_3 & I_3
\end{pmatrix}= \det
\begin{pmatrix}
\nabla_k H & \vec{\tau}_1 & \vec{\tau}_2 
\end{pmatrix}=\nabla_k H \cdot (\vec{\tau}_1\times \vec{\tau}_2).\] 
% \[\det (D_{\alpha,t,k_0}\phi_t)|_{t=0} =
% \begin{vmatrix}
% D_\alpha\gamma & 0\\
% (\nabla_k H)^* & -(\nabla_xH)^*\\
% 0 & I_3
% \end{vmatrix}= 
% \begin{vmatrix}
% D_\alpha\gamma\\
% (\nabla_k H)^*
% \end{vmatrix}=|\nabla_kH\cdot \vec{n}(x_0)|.\] 
% $D_\alpha \gamma$ contains the tangent vectors, det(A) = (a1 x a2). a3, with either ai being the rows or the columns of A. Denoting $D_\alpha\gamma$ ... a1 and a2, then n = a1 x a2.
To compute the determinant for $t>0$, we rewrite the characteristic equations \cref{eq:charX}-\cref{eq:charK} in terms of the flow map,
\[\frac{d}{dt}\phi_t(x_0,k_0) = F(\phi_t(x_0,k_0)),\]
with divergence free vector field $F(x,k) = (\nabla_kH(x,k),-\nabla_xH(x,k))$. It then follows from the chain rule that
\[\frac{d}{dt}D\psi(t,x_0,k_0) =  DF(\psi(t,x_0,k_0)) D\psi(t,x_0,k_0).\]
Because $\Tr(DF)=\divergence(F)=0$, Liouville's formula \cite[Lemma 3.11]{Teschl2004} implies that
\[\frac{d}{dt}\det(D\psi) = \divergence(F) D\psi = 0,\]
which concludes the proof.
\end{proof}
% The set $\bbU$, covered by the characteristic curves, can be a strict subset of $\bbD_b$. The next result asserts, however, that almost every characteristic curve that enters the domain $\bbD_b$ also exits the domain.
The next result asserts that almost all characteristic curves that cross the inflow boundary are of finite length, i.e., cross the outflow boundary.
\begin{lemma}
\label{lem:TravelTime}
The set $\Gamma_\infty \colonequals \{(x_0,k_0) \in \Gamma_-: \tau_+(x_0,k_0)=\infty\}$ has measure zero.
\end{lemma}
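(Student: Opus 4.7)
The plan is to exploit the conservation laws of the characteristic flow, specifically that $H$ is preserved and that the flow is volume-preserving, to derive a contradiction from the hypothesis that $\Gamma_\infty$ has positive measure. The core intuition is that any trajectory starting from $\Gamma_\infty$ is confined to a region bounded both in $x$ (since $\bbX_b$ is bounded) and in $k$ (by \Cref{lem:existence_XK}), so infinitely many characteristics of infinite length packed into a finite phase-space volume will violate the measure-preservation result, \Cref{prop:TransformU}.

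First I would exhaust $\Gamma_-$ by a countable collection of ``nice'' subsets. Specifically, for $R\in\mathbb{N}$ and $m\in\mathbb{N}$, set
\[
\Gamma_-^{R,m} \colonequals \bigl\{(x_0,k_0)\in\Gamma_-\colon R^{-1}\leq|k_0|\leq R,\ |\nabla_kH(x_0,k_0)\cdot\vec{n}(x_0)|\geq 1/m\bigr\}.
\]
Since $\Gamma_0$ has measure zero and $|k_0|\in(0,\infty)$ almost everywhere on $\Gamma_-$, the complement of $\bigcup_{R,m}\Gamma_-^{R,m}$ in $\Gamma_-$ is a null set. It therefore suffices to show $\Gamma_\infty\cap\Gamma_-^{R,m}$ has measure zero for each fixed $R,m$.

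Next I would exploit \Cref{lem:existence_XK}: for any $(x_0,k_0)\in\Gamma_-^{R,m}$ and any $t$ such that $\phi_t(x_0,k_0)\in\bbD_b$, the bound \cref{eq:bound_K} forces $\phi_t(x_0,k_0)\in \bbX_b\times A_R$, where $A_R$ is the annulus $\{k\in\bbK\colon \nu_{\min}/(\nu_{\max}R)\leq|k|\leq \nu_{\max}R/\nu_{\min}\}$. This set has finite Lebesgue measure. Applying \Cref{prop:TransformU} to $f=\chi_{\bbX_b\times A_R}$ yields
\[
|\bbX_b|\cdot|A_R|\geq \int_{\bbU} f\dx\dk = \int_{\Gamma_-}\int_0^{\tau_+(x_0,k_0)} f(\phi_t(x_0,k_0))\,|\nabla_kH\cdot\vec{n}|\dt\ddd{\Gamma},
\]
where the left-hand side is finite.

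Finally, restricting the right-hand side to the subset $\Gamma_\infty\cap\Gamma_-^{R,m}$, where $f\circ\phi_t\equiv 1$, $|\nabla_kH\cdot\vec{n}|\geq 1/m$, and $\tau_+=\infty$, I would obtain
\[
\int_{\Gamma_\infty\cap\Gamma_-^{R,m}}\int_0^{\infty}\frac{1}{m}\dt\ddd{\Gamma},
\]
which is infinite unless $\Gamma_\infty\cap\Gamma_-^{R,m}$ is a null set. This contradicts the finite upper bound, so $\Gamma_\infty\cap\Gamma_-^{R,m}$ has measure zero, and a countable union yields the claim. The main obstacle I anticipate is purely bookkeeping: carefully arguing that the exceptional set in $\Gamma_-\setminus\bigcup_{R,m}\Gamma_-^{R,m}$ is negligible with respect to the surface measure $d\Gamma$, and verifying that \Cref{prop:TransformU} can be applied with $f\geq 0$ integrable even when $\tau_+$ may equal infinity on a subset of $\Gamma_-$ (which follows by approximating with bounded travel times or by noting both sides are well defined in $[0,\infty]$).
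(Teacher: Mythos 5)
Your proposal is correct and follows essentially the same route as the paper: both derive a contradiction by combining the change-of-variables formula of \cref{prop:TransformU} with the finiteness of the swept phase-space volume (boundedness of $\bbX_b$ plus the momentum bound \cref{eq:bound_K}) and a lower bound on $|\nabla_k H\cdot\vec{n}|$, so that an infinite time integral over a positive-measure subset of $\Gamma_\infty$ would exceed a finite volume. The only difference is cosmetic: the paper extracts a single compact subset $\Gamma'\subset\Gamma_\infty$ of positive measure and gets the uniform bounds on $|k_0|$ and $|\nabla_kH\cdot\vec{n}|$ by compactness, whereas you achieve the same via the countable exhaustion $\Gamma_-^{R,m}$ and countable subadditivity.
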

\begin{proof}
Suppose the set $\Gamma_\infty$ has nonzero measure. Then there exists a compact subset $\Gamma'\subset\Gamma_\infty$ with finite positive measure.
% Then, for all $\varepsilon>0$, there exists a closed subset $S\subset\Gamma_\infty$ such that ${\rm meas}(S)>M-\varepsilon$ (see for example \cite[Theorem 2.29]{Capinski2004}). Now for all natural numbers $n\geq1$ define the closed and bounded sets $S_n = S\cap B_n$, with $B_n$ the closed ball of radius $n$. Then it holds that
% \[\bigcup_{n=1}^\infty S_n = \bigcup_{n=1}^\infty S\cap B_n = S\cap\bigcup_{n=1}^\infty B_n = S.\]
% Since $S$ has non-zero measure it follows there exist at least one closed and bounded subset $\Gamma'$ of $\Gamma_{\infty}$ with non-zero measure. By the Heine-Borel theorem this subset $\Gamma'$ is then compact.
Now we define the set $\bbU'\subset \bbU$ as
\begin{align*}
    \bbU' \colonequals \{(x,k)\in \bbU: (x,k) = \phi_t(x_0,k_0), (x_0,k_0)\in \Gamma', 0\leq t\leq \infty\}.
\end{align*}
By compactness of $\Gamma'$ there exist constants $k_{min},k_{max}>0$ such that $k_{min}\leq |k|\leq k_{max}$ for all $k\in \Gamma'$. By boundedness of $\bbX_b$ and by the bounds on the $k$ variable from the flow in \cref{eq:bound_K} we deduce that $\bbU'$ has finite measure.

Since $|\nabla_kH\cdot \vec{n}|$ is continuous and strictly positive on $\Gamma_-$, there exists $\varepsilon>0$ such that $|\nabla_kH\cdot \vec{n}|\geq\varepsilon$ on $\Gamma'$. Then we have by \cref{eq:BoundaryTransform}

\[{\rm meas}(\bbU')=\int_{\Gamma'}\int_0^\infty|\nabla_kH\cdot \vec{n}|\dt  \ddd{\Gamma}\geq\varepsilon\int_{\Gamma'}\int_0^\infty \dt  \ddd{\Gamma}=\infty.\]
This leads to a contradiction and the set $\Gamma_\infty$ thus has measure zero.
\end{proof}

\cref{prop:TransformU} and \cref{lem:TravelTime} can also be stated for the outflow boundary $\Gamma_+$ by considering the inverse flow map $\phi_{-t}$.
\subsection{Traces}
To treat nonhomogeneous boundary conditions in \cref{prob:LiouvilleBoundNonHom1}--\cref{prob:LiouvilleBoundNonHom2}, we introduce the space
\begin{align*}
    \bbT^p_- \colonequals L^p(\Gamma_-;\tau_\kappa|\nabla_kH\cdot\vec{n}|\ddd{\sigma} \dk), \quad\tau_\kappa \colonequals \min(\tau_+,\kappa), 
\end{align*}
for some $\kappa>0$ fixed.
\begin{remark}\label{rem:traces}
The norms of the trace space $\bbT_\pm^p$ depend on $\kappa$, but are equivalent for different choices of this parameter. In fact, let $0<\kappa_1 \leq \kappa_2$. Then one can show that for integrable functions $f:\Gamma_-\to \RR$
\[
\frac{\kappa_1}{\kappa_2} \int_{\Gamma_-} |f| \tau_{\kappa_2} \ddd{\Gamma} \leq  \int_{\Gamma_-} |f| \tau_{\kappa_1} \ddd{\Gamma} \leq    \int_{\Gamma_-} |f| \tau_{\kappa_2} \ddd{\Gamma}.
\]
\end{remark}
We will denote by $\|f\|_{\bbT_-^p}$ the norm induced by $\kappa = 1$. It can be shown that every function in $W^p(\bbD_b)$ has a trace in $\bbT^p_-$. To show this, we require the following density results, which are proven as in \cref{lemma:density_full} and \cref{lemma:density-smooth_full}.
% We start with showing that functions with compact support in $\bbK$ are dense, which follows with the same arguments as \cref{lemma:density_full}. 
\begin{lemma}
\label{lemma:density}
The space $W^p_c(\bbD_b)$ of functions in $W^p(\bbD_b)$ with compact support in $\bbK$ is dense in $W^p(\bbD_b)$.
\end{lemma}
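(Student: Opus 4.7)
The plan is to copy the strategy used for \cref{lemma:density_full} essentially verbatim, since the bounded spatial factor $\bbX_b$ plays no role in the construction: the cutoffs are purely in the $k$ variable. The only thing we really use about $\bbD_b$ is its product structure $\bbX_b \times \bbK$ and the special form of the Hamiltonian $H(x,k)=\nu(x)|k|$.

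First I would introduce the same family of approximants. Fix $\eta \in C_c^\infty(\RR^3)$ with $\supp \eta \subseteq B_2$, $\eta = 1$ on $B_1$, and $0 \leq \eta \leq 1$, and set
\begin{align*}
u_n(x,k) \colonequals \eta_n(k)\,\psi_n(k)\,u(x,k), \qquad \eta_n(k) \colonequals \eta(k/n), \qquad \psi_n(k) \colonequals 1-\eta(nk).
\end{align*}
Then $\supp u_n \subseteq \bbX_b \times (B_{2n}\setminus B_{1/n})$, so $u_n \in W^p_c(\bbD_b)$ for each $n$, and $u_n \to u$ almost everywhere in $\bbD_b$. Since $0 \leq \eta_n \psi_n \leq 1$, dominated convergence immediately gives $u_n \to u$ in $L^p(\bbD_b)$.

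Next I would handle the Poisson bracket term. By the Leibniz rule for $[\cdot,H]$ in the first slot,
\begin{align*}
[u_n,H] = \eta_n\psi_n\,[u,H] + u\,\psi_n[\eta_n,H] + u\,\eta_n[\psi_n,H],
\end{align*}
hence
\begin{align*}
|[u_n,H]|^p \leq 3^{p-1}\bigl(|[u,H]|^p + |u|^p|[\eta_n,H]|^p + |u|^p|[\psi_n,H]|^p\bigr).
\end{align*}
The same computation as in \cref{lemma:density_full} applies: $[\eta_n,H] = [\eta,H]/n$ is uniformly bounded in $n$ by the assumption on $\nabla_x\nu$, and $[\psi_n,H]$ is supported on the annulus $\{1/n \leq |k| \leq 2/n\}$ and satisfies $|[\psi_n,H]| = n|k|\,|\nabla_k\eta \cdot \nabla_x \nu| \leq 2\|\nabla_x\nu\|_{L^\infty}\|\nabla_k\eta\|_{L^\infty}$ on that annulus. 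Consequently the right-hand side is dominated by a uniformly integrable majorant, and another application of dominated convergence yields $[u_n,H] \to [u,H]$ in $L^p(\bbD_b)$, proving density.

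There is no genuine obstacle here compared to the full-space case: the bounded factor $\bbX_b$ only enters through the domain of integration and never interacts with the cutoffs, which live entirely in $\bbK$. The one small bookkeeping point is that one should apply the Leibniz identity $[fg,H]=f[g,H]+g[f,H]$ for the bracket (which follows from the definition $[\cdot,\cdot]=\nabla_k\cdot\nabla_x-\nabla_x\cdot\nabla_k$ since $H$ appears in the second slot), so that no boundary of $\bbX_b$ ever enters the calculation.
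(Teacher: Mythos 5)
Your proposal is correct and is exactly the paper's argument: the paper proves \cref{lemma:density} by invoking the proof of \cref{lemma:density_full} verbatim, using the same cutoffs $\eta_n\psi_n$ in the $k$ variable, the same Leibniz decomposition of $[u_n,H]$, and the same uniform-integrability bounds on $[\eta_n,H]$ and $[\psi_n,H]$. Your observation that the cutoffs live entirely in $\bbK$, so the bounded factor $\bbX_b$ never interacts with the construction, is precisely why the paper states the lemma without a separate proof.
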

\begin{lemma}
\label{lemma:density-smooth}
The space $C^\infty_{c}(\overline{\bbX_b}\times \bbK)$ of functions that are smooth up to the boundary of $\bbX_b$ and that have compact support in $\bbK$ is dense in $W^p(\bbD_b)$.
\end{lemma}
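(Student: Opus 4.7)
The plan is to build on \cref{lemma:density} in the same spirit as the passage from \cref{lemma:density_full} to \cref{lemma:density-smooth_full}, but with the additional step of dealing with the spatial boundary $\partial \bbX_b$. By \cref{lemma:density} it is enough to approximate functions $v\in W^p(\bbD_b)$ whose support is compact in $\bbK$; say $\supp_k v\subset A$ with $A\Subset\bbK$. On $\overline{\bbX_b}\times A$ the Hamiltonian $H(x,k)=\nu(x)|k|$ is $C^{1,1}$ with uniformly bounded gradients, and the Poisson bracket takes the form $[v,H]=b_1(x,k)\cdot\nabla_x v + b_2(x,k)\cdot\nabla_k v$ with Lipschitz coefficients $b_1=\nabla_kH$, $b_2=-\nabla_xH$.

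Using the $C^1$ regularity of $\partial\bbX_b$, I would choose a finite open cover $\{U_i\}_{i=0}^N$ of $\overline{\bbX_b}$ and a subordinate partition of unity $\{\chi_i\}$, where $U_0\Subset\bbX_b$ and every boundary patch $U_i$ ($i\geq 1$) carries a $C^1$ diffeomorphism flattening $U_i\cap \partial\bbX_b$ to a piece of a hyperplane and admitting an inward transversal direction. Writing $v=\sum_i (\chi_i v)$, it suffices to approximate each piece $\chi_i v$. The interior piece $\chi_0 v$ can be handled exactly as in \cref{lemma:density-smooth_full} by standard mollification in $(x,k)$, yielding a $C^\infty_c(\bbD_b)\subset C^\infty_c(\overline{\bbX_b}\times\bbK)$ approximation.

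For each boundary piece $\chi_i v$, I would pull the function through the diffeomorphism onto a half-space cross $A$, translate by a small amount $\varepsilon$ along the inward transversal direction so that the $\varepsilon$-support lies strictly inside $\bbX_b\times A$, and then mollify in both variables with a radially symmetric kernel of radius strictly less than $\varepsilon$. Pulling back produces a function in $C^\infty_c(\overline{\bbX_b}\times\bbK)$. Convergence in $L^p$ follows from strong continuity of translations and the standard mollifier estimate; convergence of the brackets comes from the identity
\begin{equation*}
[v_\varepsilon,H] - [v,H] \;=\; \bigl([v_\varepsilon,H] - ([v,H])_\varepsilon\bigr) + \bigl(([v,H])_\varepsilon - [v,H]\bigr),
\end{equation*}
where the second term vanishes in $L^p$ by continuity of translation and mollification.

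The main obstacle is the first (commutator) term, since mollification does not commute with multiplication by the non-constant coefficients $b_1,b_2$. Here I would invoke a Friedrichs commutator argument: because $b_1,b_2$ are Lipschitz on $\overline{\bbX_b}\times A$ and $\nabla v\in L^p$ in the distributional sense available through $[v,H]\in L^p$ together with the half-space structure, one obtains $\|[\rho_\varepsilon\ast,\,b_j\cdot\nabla](v)\|_{L^p}\to 0$. Alternatively one may directly cite \cite[Theorem~4, p.~21]{Jensen2004} used in \cref{lemma:density-smooth_full}, applied on the flattened half-space after the translation has removed the boundary obstruction. Combining the pieces with the partition of unity gives the desired approximation of $v$ in $W^p(\bbD_b)$ by elements of $C^\infty_c(\overline{\bbX_b}\times\bbK)$.
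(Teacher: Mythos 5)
There is a genuine gap, and it sits exactly at the step this lemma exists to handle. In your boundary-patch construction you translate each piece $\chi_i v$ \emph{inward} so that its support lies strictly inside $\bbX_b\times A$ and then mollify with radius smaller than the translation. But this produces approximants in $C^\infty_c(\bbX_b\times\bbK)$, i.e.\ functions vanishing in a neighborhood of $\partial\bbX_b$, and such functions cannot converge in the graph norm of $W^p(\bbD_b)$ to a general $v$: translating the function inward means evaluating it at points nearer to (or beyond) the boundary, so you must extend $v$ by zero across $\partial\bbX_b$, and for $v$ with nonzero boundary values this zero extension creates a boundary layer whose Poisson bracket does not converge in $L^p$. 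One can see the failure without circularity: the trace estimate $\|w\|_{\bbT_-^p}^p\leq p\|w\|_{L^p(\bbD_b)}^p+\|w\|_{W^p(\bbD_b)}^p$ is derived in the proof of \cref{lemma:TraceTheorem} directly for smooth functions, so if $v_\varepsilon\to v$ in $W^p(\bbD_b)$ with $v_\varepsilon$ vanishing near $\Gamma_-$, then $\gamma_-v=0$. Since $W^p(\bbD_b)$ contains functions with nonzero inflow trace (take $v$ constant in $x$ times a cutoff in $k$, for which $[v,H]=0$), your scheme can only reach the zero-trace subclass — yet the whole point of approximating by $C^\infty_c(\overline{\bbX_b}\times\bbK)$ rather than $C^\infty_c(\bbD_b)$ is to allow nonzero boundary values, since this lemma underpins the trace theorem and the divergence identity \cref{eq:divergence}.

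The repair is to reverse the translation: on the flattened half-space, shift the function toward the boundary by composing with $x\mapsto x+\varepsilon e_{\rm in}$ (using interior values), so that the translate is defined on a neighborhood of the closed half-space, and only then mollify; the result is smooth \emph{up to} the boundary with, in general, nonzero trace, as in the standard global approximation theorem for Sobolev functions on $C^1$ domains. With that correction, the rest of your outline is sound: the paper itself proves the lemma by the route you sketch — reduce to compact $k$-support as in \cref{lemma:density_full} (your use of \cref{lemma:density}) and then invoke \cite[Theorem~4, p.~21]{Jensen2004} for graph spaces of first-order operators, exactly as in \cref{lemma:density-smooth_full} — so citing Jensen on the half-space is legitimate and short-circuits the mollifier analysis. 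One further misstatement to fix if you keep the hands-on commutator route: $[v,H]\in L^p$ does \emph{not} give $\nabla v\in L^p$ (only the derivative along the Hamiltonian field is controlled); fortunately the Friedrichs/DiPerna--Lions commutator lemma for Lipschitz coefficients needs only $v\in L^p$, so the conclusion $\|[\rho_\varepsilon\ast,\,b_j\cdot\nabla](v)\|_{L^p}\to0$ survives without that false premise.
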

Using density of smooth functions and adapting the proof of \cite[Theorem~ 2.2]{Manteuffel1999} to our situation, one can prove the following trace lemma.
\begin{lemma}
\label{lemma:TraceTheorem}
There exists a surjective, bounded linear operator $\gamma_- \colon W^p(\bbD_b)\to\bbT^p_-$ that satisfies $\gamma_-u=u_{\mid\Gamma_-}$ for functions $u\in W^p(\bbD_b)$ that are continuous up to the boundary of $\bbX_b$.
% \todo{The proof is in the comments now.}
\end{lemma}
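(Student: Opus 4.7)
The plan is to establish boundedness via the density result of Lemma~\ref{lemma:density-smooth} combined with an integration-along-characteristics identity, and then to prove surjectivity by exhibiting an explicit right inverse built from the flow map.

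For boundedness, by density it suffices to consider $u\in C^\infty_{c}(\overline{\bbX_b}\times\bbK)$ and set $\gamma_-u\colonequals u_{\mid\Gamma_-}$. Fix $(x_0,k_0)\in\Gamma_-$. Along characteristics, $\frac{d}{dt}u(\phi_t(x_0,k_0))=-[u,H](\phi_t(x_0,k_0))$, whence
\[
u(x_0,k_0) = u(\phi_t(x_0,k_0))+\int_0^t[u,H](\phi_s(x_0,k_0))\,ds\qquad\text{for }t\in[0,\tau_+(x_0,k_0)].
\]
Averaging over $t\in[0,\tau_\kappa(x_0,k_0)]$ with $\kappa=1$, raising to the $p$-th power, and applying Hölder's inequality twice together with $\tau_\kappa\leq 1$ yields the pointwise estimate
\[
|u(x_0,k_0)|^p\,\tau_\kappa(x_0,k_0) \leq 2^{p-1}\int_0^{\tau_+(x_0,k_0)}\bigl(|u|^p+|[u,H]|^p\bigr)(\phi_t(x_0,k_0))\,dt.
\]
Multiplying by $|\nabla_kH\cdot\vec{n}(x_0)|$, integrating over $\Gamma_-$, and invoking the change-of-variables formula of Proposition~\ref{prop:TransformU} gives
\[
\|\gamma_-u\|_{\bbT_-^p}^p\leq 2^{p-1}\int_{\bbU}\bigl(|u|^p+|[u,H]|^p\bigr)\,dx\,dk\leq 2^{p-1}\|u\|_{W^p(\bbD_b)}^p,
\]
and $\gamma_-$ extends by density to a bounded linear operator on $W^p(\bbD_b)$.

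For surjectivity, given $g\in\bbT_-^p$, I construct a bounded lift. Fix a cutoff $\eta\in C^\infty_c([0,\infty))$ with $\eta(0)=1$ and $\supp\eta\subset[0,1]$. Extending $\tau_-$ by $+\infty$ on $\bbD_b\setminus\bbU$, define
\[
u(x,k)\colonequals \eta\bigl(\tau_-(x,k)\bigr)\,g\bigl(\phi_{-\tau_-(x,k)}(x,k)\bigr),
\]
understood as $u\equiv 0$ wherever $\tau_-(x,k)>1$. Along a forward characteristic from $(x_0,k_0)\in\Gamma_-$ this gives $u(\phi_t(x_0,k_0))=\eta(t)g(x_0,k_0)$ and, after differentiation, $[u,H](\phi_t(x_0,k_0))=-\eta'(t)g(x_0,k_0)$. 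Using Proposition~\ref{prop:TransformU} in reverse, both $\|u\|_{L^p(\bbD_b)}^p$ and $\|[u,H]\|_{L^p(\bbD_b)}^p$ reduce to boundary integrals of $|g|^p$ weighted by $\int_0^{\tau_+(x_0,k_0)}|\eta(t)|^p\,dt$ and $\int_0^{\tau_+(x_0,k_0)}|\eta'(t)|^p\,dt$; both are bounded by a constant (depending on $\eta$) times $\min(\tau_+,1)=\tau_\kappa$. Hence $\|u\|_{W^p(\bbD_b)}\leq C\|g\|_{\bbT_-^p}$, and the continuity of $u$ at $t=0$ along characteristics gives $\gamma_-u=g$.

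The principal obstacle is that the lift $u$ must lie in $W^p(\bbD_b)$ in the distributional sense, not merely along characteristics. Two issues must be addressed: first, $\bbU$ may fail to cover $\bbD_b$, so the zero extension could introduce jumps on $\partial\bbU\cap\bbD_b$ contributing distributional mass to $[u,H]$; second, $g$ is only an element of $\bbT_-^p$ and so $[u,H]$ must be interpreted weakly. The first is handled by a symmetric application of Lemma~\ref{lem:TravelTime}, which shows that $\bbD_b\setminus\bbU$, consisting of points whose backward characteristic is trapped in $\bbD_b$, has measure zero; the compact support of $\eta$ confines the nonzero part of the lift to $\{\tau_-\leq 1\}$, where the pointwise formula is valid. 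The second is handled by first verifying the estimate for traces of smooth functions (dense in $\bbT_-^p$ by Lemma~\ref{lemma:density-smooth} and the boundedness of $\gamma_-$) and passing to the limit, which identifies the classical $[u,H]$ constructed above with the distributional one.
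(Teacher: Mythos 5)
Your overall strategy coincides with the paper's: boundedness via \cref{lemma:density-smooth} and integration along characteristics (you average over $t\in[0,\tau_1]$ where the paper picks the point minimizing $|v(\phi_s)|$ and uses Young's inequality — both yield the same estimate), and surjectivity via an explicit flow-based lifting (your cutoff lift $\eta(\tau_-)\,g(\phi_{-\tau_-})$ versus the paper's exponential lift $\ME g(\phi_s(x,k))=e^{-s}g(x,k)$, whose bound $\int_0^{\tau_+}e^{-ps}\ddd{s}\leq\tau_1$ plays the role of your $\int_0^{\tau_+}|\eta|^p\ddd{t}\leq C_\eta\tau_1$). These computations are correct, and you deserve credit for explicitly raising the two technical points (the complement of $\bbU$, and the weak interpretation of $[u,H]$) that the paper's proof passes over in silence.

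However, your resolution of the first point contains a genuine error: it is \emph{not} true that $\bbD_b\setminus\bbU$ has measure zero, and no ``symmetric application'' of \cref{lem:TravelTime} yields this. That lemma (and its mirror version for $\Gamma_+$) concerns the \emph{boundary} measure on $\Gamma_\pm$ of trajectories with infinite forward/backward exit time; it says nothing about the interior Lebesgue measure of points never reached from the boundary. For a variable velocity $\nu$ with, say, a waveguide-like minimum inside $\bbX_b$, the backward-trapped set $\{\tau_-=\infty\}$ consists of stable oscillating rays and can fill an open set of positive measure — this is precisely why the paper states before \cref{def:U} that it does ``not aim to prove that the characteristics cover $\bbD_b$.'' Fortunately, your construction does not need the false claim: $\bbU$ is a union of complete forward orbits from $\Gamma_-$, hence invariant along the flow, so the zero extension creates no jump transversal to the flow direction, and the distributional identity for $[u,H]$ can be verified directly by testing against $\psi\in C_c^\infty(\bbD_b)$, changing variables via \cref{prop:TransformU} to $\Gamma_-\times(0,\tau_+)$ coordinates, and integrating by parts in $t$ only — the boundary terms at $t=0$ and $t=\tau_+$ vanish because $\psi$ is supported away from $\partial\bbD_b$, and the term at $t=1$ vanishes because $\supp\eta\subset[0,1]$, while the trapped set contributes nothing since $u\equiv0$ there. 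With this repair (which also makes your second, density-and-limit step unnecessary, since the weak verification works for any $g\in\bbT_-^p$ directly), your proof is complete and matches the paper's in substance.
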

\begin{proof}
% \todo{I suggest to comment the proof. Opinions?}
By \cref{lemma:density-smooth} it suffices to consider functions $v\in C^\infty(\overline{\bbX_b}\times\bbK)$.
For an arbitrary point $(x,k)\in\Gamma_-$, let $(\Tilde{x},\Tilde{k}) = \phi_{\Tilde{s}}(x,k)$ denote a point where $|v(\phi_s(x,k))|$ is minimal for $0\leq s\leq \tau_1(x,k)$.
Then we have
\begin{align}\label{eq:tr1}
    |v(\Tilde{x},\Tilde{k})|^p\tau_1\leq\int_0^{\tau_1(x,k)}|v(\phi_s(x,k))|^p \ddd{s}\leq\int_0^{\tau_+(x,k)}|v(\phi_s(x,k))|^p \ddd{s}.
\end{align}
% {\color{red} If we keep $\tau_1$ on the right-hand side, we obtain 
By applying the fundamental theorem of calculus, and noting that the derivative of $v(\phi_s(x,k))$ with respect to $s$ equals $[v,H]$ evaluated in $\phi_s(x,k)$, it also holds that
\begin{align*}
    |v(x,k)|^p\leq|v(\Tilde{x},\Tilde{k})|^p
    +p\int_0^{\tau_+(x,k)}\big|[v,H] v(\phi_s(x,k))^{p-1}\big|\ddd{s}.
\end{align*}
Multiplying this inequality by $\tau_1$, integrating over $\Gamma_-$,
and using \cref{eq:tr1}, we obtain that
\begin{align*}
    \|v\|_{\bbT_-^p}^p &= \int_{\Gamma_-}|v(x,k)|^p\tau_1|\nabla_k H\cdot \vec{n}(x)|\ddd{\Gamma}\\
    \nonumber
    &\leq \int_{\Gamma_-}|v(\tilde{x},\tilde{k})|^p\tau_1|\nabla_k H\cdot \vec{n}(x)|\ddd{\Gamma}\\
    \nonumber
    &\quad+p\int_{\Gamma_-}\int_0^{\tau_+(x,k)}\big|[v,H]v(\phi_s(x,k))^{p-1}\big|\tau_1|\nabla_k H\cdot \vec{n}(x)|\ddd{s}\ddd{\Gamma}.
\end{align*}
By applying Young's inequality to the last term and using that $\tau_1\leq 1$, we obtain
\begin{align*}
    \|v\|_{\bbT_-^p}^p 
    % &\leq\int_{\Gamma_-}\int_{0}^{\tau_+(x,k)}|v(\phi_s(x,k))|^p|\nabla_kH\cdot \vec{n}(x)|\ddd{s}\ddd{\sigma} \dk\\
    % &\quad+p1\left(\int_{\Gamma_-}\int_0^{\tau_+(x,k)}|[v,H]|^p|\nabla_kH\cdot \vec{n}(x)|\ddd{s}\ddd{\sigma} \dk\right)^{1/p}
    % \\
    % &\quad\left(\int_{\Gamma_-}\int_0^{\tau_+(x,k)}|v(\phi_s(x,k))|^p|\nabla_kH\cdot \vec{n}(x)|\dt \ddd{\sigma} \dk\right)^{1-1/p}\\
    % &\leq\|v\|_{L^p(\bbD_b)}^p+p1\|[v,H]\|_{L^p(\bbD_b)}\|v\|_{L^p(\bbD_b)}^{p-1}\\
    &\leq p\|v\|_{L^p(\bbD_b)}^p+\|v\|_{W^p(\bbD_b)}^p,
\end{align*}
which shows existence of a bounded trace operator $\gamma_-$. To show that $\gamma_-$ is surjective, consider a function $g\in \bbT_-^p$ and define its extension into $W^p(\bbD_b)$ as
\begin{align}\label{eq:lifting}
\ME g(\phi_s(x,k)) =e^{-s}g(x,k), \qquad 0\leq s\leq\tau_+(x,k),
\end{align}
which defines $\ME g$ on $\bbU$, defined in \cref{def:U}. On the complement of $\bbU$, we set $\ME g=0$.
% In the case that the characteristics coming from $\Gamma_-$ do not cover the whole domain $\bbD_b$ the function $\ME g$ can be defined as zero on the non-covered part.
Differentiation with respect to $s$ shows that $[\ME g,H]=-\ME g$.
It remains to estimate the $L^p(\bbD_b)$-norm of $\ME g$ in terms of the $\bbT_-^p$-norm of $g$.
Using \cref{eq:BoundaryTransform}, we obtain that
\begin{align*}
    \|\ME g\|_{L^p(\bbD_b)}^p&= \int_{\Gamma_-}\int_0^{\tau_+(x,k)} e^{-ps} \ddd{s}|g(x,k)|^p|\nabla_kH\cdot\vec{n}|\ddd{\Gamma}.
\end{align*}
Since $1-e^{-t}\leq t$ for all $t\geq 0$, we deduce that
% the existence of a constant $C$, depending only on $p$, such that
\begin{align*}
    \int_0^{\tau_+(x,k)} e^{-ps} \ddd{s} = \frac{1}{p} (1- e^{-p\tau_-(x,k)})\leq \tau_1(x,k),
\end{align*}
which concludes the proof.
\end{proof}

In some situations we require functions that have traces with more regularity than merely $\bbT_-^p$. For this purpose we define the spaces
\begin{align*}
  \widetilde \bbT^p_\pm &\colonequals L^p(\Gamma_\pm;|\nabla_k H\cdot\vec{n}|\ddd{\Gamma})\\
    \widetilde{W}^p(\bbD_b) &\colonequals \{u\in W^p(\bbD_b): u|_{\Gamma_\pm} \in \widetilde \bbT^p_\pm\},
\end{align*}
which we endow with the norms
\begin{align*}
 \|w\|^p_{\widetilde \bbT^p_\pm}  \colonequals \int_{\Gamma_\pm} |w|^p |\nabla_k H\cdot \vec{n}|\ddd{\Gamma},\qquad
 \|w\|^p_{\widetilde{W}^p(\bbD_b)}  \colonequals \|w\|_{W^p(\bbD_b)}^p + \|w\|^p_{\widetilde \bbT^p_\pm}.
\end{align*}
% {\color{red} Add some detail: Using \cref{lemma:density-smooth}, we can approximate $u$ by a sequence $u_m\in C^\infty_{c}(\overline{\bbD_b})$
% {\color{red} Attention: Now $k=0$ is in the support of $u_m$. Rather $u_m\in C^\infty_{c}(\overline{\bbX_b}\times\bbK)$ ... Since the sequence $u_m$ converges to $u$ on $\Gamma_+$ and $\Gamma_-$ \cite[Chap. 5.5]{evans2010}
% {\color{red} In which norm? Pointwise?}
% }}
For $w\in\widetilde W^1(\bbD_b)$ we have the following divergence theorem
\begin{align}\label{eq:divergence}
    \int_{\bbD_b}[w,H]\dx\dk = \int_{\Gamma_+}w|\nabla_kH\cdot\vec{n}|\ddd{\Gamma}-\int_{\Gamma_-}w|\nabla_kH\cdot\vec{n}|\ddd{\Gamma},
\end{align}
which follows from \cref{lemma:density-smooth}.
% {\color{red}Note $[vw,H]=v[w,H]+w[v,H]$, which yields an integration by parts formula.}
It can be shown that functions in this space only require this additional regularity on one part of the domain boundary. See also \cite[p. 224]{DL6} for the case of constant $\nu$.
\begin{proposition}
\label{prop:BoundaryWithoutTau}
It holds that
\begin{align*}
    \widetilde{W}^p(\bbD_b) = \widetilde{W}^p_-(\bbD_b) \colonequals \{u\in W^p(\bbD_b): u|_{\Gamma_-} \in L^p(\Gamma_-;|\nabla_kH\cdot\vec{n}|\ddd{\Gamma})\}.
\end{align*}
\end{proposition}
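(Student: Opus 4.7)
The inclusion $\widetilde{W}^p(\bbD_b)\subseteq \widetilde{W}_-^p(\bbD_b)$ is immediate from the definitions, so the content of the proposition is the reverse inclusion: given $u\in W^p(\bbD_b)$ with $u|_{\Gamma_-}\in\widetilde{\bbT}_-^p$, I have to show $u|_{\Gamma_+}\in\widetilde{\bbT}_+^p$. My plan is to apply the divergence identity \cref{eq:divergence} to (a smooth regularization of) $|u|^p$. Since the chain rule for the Poisson bracket gives $[|u|^p,H]=p\,|u|^{p-2}u\,[u,H]$, this would yield, at the formal level,
\[
\int_{\Gamma_+}|u|^p\,|\nabla_kH\cdot\vec n|\ddd{\Gamma}=\int_{\Gamma_-}|u|^p\,|\nabla_kH\cdot\vec n|\ddd{\Gamma}+p\int_{\bbD_b}|u|^{p-2}u\,[u,H]\dx\dk,
\]
and Young's inequality turns the interior term into $(p-1)\|u\|_{L^p(\bbD_b)}^p+\|[u,H]\|_{L^p(\bbD_b)}^p\leq C\|u\|_{W^p(\bbD_b)}^p$. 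This produces the a priori trace estimate $\|u\|_{\widetilde{\bbT}_+^p}^p\leq \|u\|_{\widetilde{\bbT}_-^p}^p+C\|u\|_{W^p(\bbD_b)}^p$, from which the proposition follows.

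To rigorize the formal computation I would proceed in two steps. First, for $u\in C^\infty_c(\overline{\bbX_b}\times\bbK)$, take $w_\epsilon\colonequals (u^2+\epsilon)^{p/2}-\epsilon^{p/2}$, which is smooth, compactly supported, and hence lies in $\widetilde{W}^1(\bbD_b)$, so \cref{eq:divergence} applies. The interior integrand $p(u^2+\epsilon)^{(p-2)/2}u\,[u,H]$ is dominated by $p|u|^{p-1}|[u,H]|$ (integrable by H\"older), and the boundary integrands by $(\|u\|_\infty^2+1)^{p/2}\chi_{\supp u}$, which has finite integral against $|\nabla_kH\cdot\vec n|\ddd{\Gamma}$ since $\supp u$ is compact in $\overline{\bbX_b}\times\bbK$. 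Dominated convergence as $\epsilon\to 0$ then gives the displayed identity for smooth $u$. Second, for general $u\in\widetilde{W}_-^p(\bbD_b)$, I would approximate by a sequence $u_n\in C^\infty_c(\overline{\bbX_b}\times\bbK)$ with $u_n\to u$ in $W^p(\bbD_b)$ \emph{and} $u_n|_{\Gamma_-}\to u|_{\Gamma_-}$ in $\widetilde{\bbT}_-^p$. Applying the smooth estimate to $u_n-u_m$ forces $\{u_n|_{\Gamma_+}\}$ to be Cauchy in $\widetilde{\bbT}_+^p$, and its limit can be identified with $u|_{\Gamma_+}$ through the continuity of the weaker trace map $W^p(\bbD_b)\to\bbT_+^p$ (the $\Gamma_+$-analogue of \cref{lemma:TraceTheorem}, which follows from a time-reversed version of its proof).

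The main obstacle will be producing this refined approximating sequence, since \cref{lemma:density-smooth} only controls convergence in $W^p(\bbD_b)$, not on the boundary. The strategy I would follow is to split $u=\ME(u|_{\Gamma_-})+v$, where $\ME$ is the lifting from \cref{lemma:TraceTheorem}: it extends continuously from $\widetilde{\bbT}_-^p$ into $\widetilde{W}_-^p(\bbD_b)$ because $\int_0^{\tau_+}e^{-ps}\ddd{s}\leq 1/p$ makes the weight $\tau_1$ in the $L^p$-bound on $\|\ME g\|_{L^p(\bbD_b)}$ superfluous, and $[\ME g,H]=-\ME g$ directly gives control of the commutator. The remainder $v\in W^p(\bbD_b)$ satisfies $v|_{\Gamma_-}=0$. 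The lifted piece is then approximated by smoothing $u|_{\Gamma_-}$ in $\widetilde{\bbT}_-^p$, re-lifting, and mollifying in the bulk; the remainder $v$ is approximated by smooth functions vanishing near $\Gamma_-$, obtained by multiplying with a smooth cutoff of the travel time $\tau_-$ (which serves as a boundary-distance function) and applying the convolution argument of \cref{lemma:density-smooth}. The vanishing trace of the latter on $\Gamma_-$ makes its boundary contribution trivially convergent, and summing the two pieces yields the required sequence.
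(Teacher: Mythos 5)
Your proposal is correct and follows essentially the same route as the paper: the paper likewise applies the divergence identity \cref{eq:divergence} to $|u|^p$ for $u\in \widetilde{W}^p_-(\bbD_b)\cap C^\infty_c(\overline{\bbX_b}\times\bbK)$, obtaining \cref{eq:int_parts_p}, and then concludes with a one-sentence ``density argument'' that it does not spell out, so your $\epsilon$-regularization $(u^2+\epsilon)^{p/2}$ and, in particular, your two-norm approximation scheme (splitting off the boundary datum via the lifting $\ME$ of \cref{lemma:TraceTheorem} and cutting off the zero-trace remainder by a smooth function of $\tau_-$) supply precisely the details the paper leaves implicit. One cosmetic slip: the dominating function $p|u|^{p-1}|[u,H]|$ for the interior integrand is valid only for $p\le 2$, since for $p>2$ one has $(u^2+\epsilon)^{(p-2)/2}\ge |u|^{p-2}$; replace it by, e.g., $p(\|u\|_{L^\infty}^2+1)^{(p-2)/2}|u|\,|[u,H]|$, which is harmless because $u$ is smooth with compact support.
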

\begin{proof}
Let $u\in \widetilde{W}^p_-(\bbD_b)\cap C^\infty_c(\overline{\bbX_b}\times\bbK)$. Since $u\in W^p(\bbD_b)$, the integral
\begin{align*}
    \int_{\bbD_b}[|u|^p,H]\dx\dk=p\int_{\bbD_b}u|u|^{p-2}[u,H]\dx\dk
\end{align*}
is well-defined by H\"older's inequality.
By \cref{eq:divergence} it holds that
\begin{align}\label{eq:int_parts_p}
    \int_{\bbD_b}[|u|^p,H]\dx\dk = \int_{\Gamma_+}|u|^p|\nabla_kH\cdot\vec{n}|\ddd{\Gamma}-\int_{\Gamma_-}|u|^p|\nabla_kH\cdot\vec{n}|\ddd{\Gamma}.
\end{align}
Using a density argument and that the integrals over $\Gamma_-$ and $\bbD_b$ are well-defined by assumption, we conclude that the integral over $\Gamma_+$ exists for $u\in  \widetilde{W}^p_-(\bbD_b)$.
\end{proof}

\subsection{Well-posedness of the Liouville equation on bounded domains}
We can now establish the existence of a unique solution to the Liouville equation on $\bbD_b$. 
We start by suitably adapting the definition of the group in \cref{eq:GroupUnbounded} employed for unbounded domain $\bbX$.
Since for times exceeding the travel time $\tau_-$ the characteristic curves trace back to points outside $\bbD_b$, we now define
\begin{align}
    \label{eq:semigroupbounded}
    (G(t)v)(x,k) \colonequals v(\phi_{-t}(x,k))Y(\tau_-(x,k)-t).
\end{align}
for functions $v \in L^p(\bbD_b)$ extended by zero on the complement of $\bbD_b$, and with $Y$ denoting the Heaviside step function, with $Y(0)=1/2$. 
It can be shown that this family of operators defines a semigroup on $L^p(\bbD_b)$ as follows.
\begin{proposition}
\label{prop:semigroupbounded}
The family of operators $\{G(t)\}_{t\geq0}$ defined in \cref{eq:semigroupbounded} constitutes a strongly continuous semigroup of contractions in $L^p(\bbD_b)$, for $1\leq p<\infty$. Furthermore, the infinitesimal generator $A$ of this semigroup is defined by
\begin{align*}
    Av  &\colonequals [v,H],\\
    D(A) &\colonequals \{v \in W^p(\bbD_b): v_{\mid\Gamma_{-}} = 0\}.
\end{align*}
\end{proposition}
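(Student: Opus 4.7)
The plan is to prove the proposition in two stages: first verify that $\{G(t)\}_{t\geq 0}$ defined in \cref{eq:semigroupbounded} is a strongly continuous semigroup of contractions on $L^p(\bbD_b)$, and then identify its infinitesimal generator with the operator proposed in the statement, which I denote $\tilde A$ with domain $\tilde D \colonequals \{v\in W^p(\bbD_b):\, \gamma_- v=0\}$; the latter is well-defined thanks to \cref{lemma:TraceTheorem}.

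For the semigroup law $G(s+t)=G(s)G(t)$, I would combine $\phi_{-t}\circ \phi_{-s}=\phi_{-(t+s)}$ with the identity $\tau_-(\phi_{-t}(x,k))=\tau_-(x,k)-t$, valid for $t\leq \tau_-(x,k)$, which follows directly from \cref{eq:traveltime}; a short case analysis on the relative sizes of $\tau_-(x,k)$, $s$ and $t$ then shows that the product of the two Heaviside factors collapses to $Y(\tau_-(x,k)-(s+t))$. Contractivity follows from the change of variables $(x_0,k_0)=\phi_{-t}(x,k)$, whose Jacobian has absolute value one by \cref{lemma:Liouville}: the cutoff $Y(\tau_-(x,k)-t)=1$ is precisely the condition that the backward orbit stays inside $\bbD_b$, so after the substitution the integration is restricted to a subset of $\bbD_b$, yielding $\|G(t)v\|_{L^p(\bbD_b)}\leq \|v\|_{L^p(\bbD_b)}$. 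Strong continuity at $t=0^+$ is first verified on $C_c(\bbD_b)$ using continuity of the flow together with the fact that $\{(x,k):\tau_-(x,k)<t\}$ shrinks to a null set as $t\to 0^+$, and then extended to $L^p(\bbD_b)$ by density and the uniform bound just obtained.

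For the identification of the generator I would combine the strategy of \cref{prop:inf-generator} with a resolvent argument. For $v\in C^\infty_c(\overline{\bbX_b}\times\bbK)$ with $\dist(\supp v,\Gamma_-)>0$, the Heaviside factor equals one on $\supp(G(t)v)\cup \supp v$ for sufficiently small $t$, so the computation of \cref{prop:inf-generator} applies verbatim and yields $(G(t)v-v)/t\to [v,H]$ in $L^p(\bbD_b)$; such $v$ therefore lie in $D(A)\cap \tilde D$ with $Av=[v,H]$. To upgrade this to the full equality $A=\tilde A$ I would use the explicit resolvent: for $\lambda>0$ and $f\in L^p(\bbD_b)$, the operator
\begin{align*}
(R_\lambda f)(x,k)\colonequals\int_0^{\tau_-(x,k)}e^{-\lambda s}f(\phi_{-s}(x,k))\,ds
\end{align*}
coincides with $(\lambda I-A)^{-1}$ by standard semigroup theory, being the Laplace transform of $G(t)f$. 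Differentiating along characteristics, first for $f\in C^\infty_c(\bbD_b)$ with support bounded away from $\Gamma_-$, shows $[R_\lambda f,H]=\lambda R_\lambda f-f$ and hence $R_\lambda f\in W^p(\bbD_b)$; by the contraction bound $\|R_\lambda f\|_{L^p(\bbD_b)}\leq \lambda^{-1}\|f\|_{L^p(\bbD_b)}$ this extends to arbitrary $f\in L^p(\bbD_b)$. Since the upper integration limit $\tau_-$ collapses at $\Gamma_-$, $R_\lambda f$ vanishes pointwise there, and combined with continuity of the trace operator from \cref{lemma:TraceTheorem} one obtains $\gamma_-(R_\lambda f)=0$. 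Hence $R_\lambda$ maps $L^p(\bbD_b)$ into $\tilde D$ and satisfies $(\lambda I-\tilde A)R_\lambda=I$. Conversely, $\tilde A$ is dissipative on $\tilde D$: applying the divergence identity to $[v,H]\,|v|^{p-2}v$ and invoking $\gamma_- v=0$ leaves only a nonpositive boundary contribution on $\Gamma_+$. This makes $\lambda I-\tilde A$ injective, hence $R_\lambda(\lambda I-\tilde A)v=v$ for every $v\in \tilde D$. Combining both identities gives $R_\lambda=(\lambda I-\tilde A)^{-1}=(\lambda I-A)^{-1}$, whence $A=\tilde A$.

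The main obstacle I anticipate lies in the trace and regularity analysis of $R_\lambda$ for arbitrary $f\in L^p(\bbD_b)$. Pointwise vanishing of $R_\lambda f$ at $\Gamma_-$ is transparent, but to translate this into $\gamma_-(R_\lambda f)=0$ in $\bbT_-^p$ one needs an a priori estimate of $R_\lambda f$ in the graph norm of $W^p(\bbD_b)$, which is then combined with the density statement \cref{lemma:density-smooth} and continuity of the trace operator. The lifting operator $\ME$ constructed in \cref{lemma:TraceTheorem} provides the natural bridge between the pointwise vanishing along characteristics and the vanishing of the trace in $\bbT^p_-$.
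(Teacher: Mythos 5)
Your proposal is correct and takes essentially the same route as the paper, whose proof of \cref{prop:semigroupbounded} simply defers to \cite[Chapter~XXI, \S 2, Theorem~2]{DL6} with the constant-velocity flow replaced by the variable-velocity one: direct verification of the semigroup axioms via \cref{lemma:Liouville} and the travel-time identity, followed by identification of the generator through the explicit resolvent $R_\lambda$ along backward characteristics and dissipativity of the candidate operator, is precisely that classical argument. One small remark: your dissipativity step uses the divergence identity with the sign $\int_{\bbD_b}[w,H]\dx\dk = \int_{\Gamma_-}w\,|\nabla_kH\cdot\vec{n}|\ddd{\Gamma}-\int_{\Gamma_+}w\,|\nabla_kH\cdot\vec{n}|\ddd{\Gamma}$, which is the correct one (and the one effectively used in the proof of \cref{thrm: NormNoScatter}), whereas \cref{eq:divergence} as printed has the boundary terms with the opposite sign, so your computation is consistent with the corrected identity rather than the printed one.
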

\begin{proof}
The proof of this proposition is along the lines of that of \cite[ Chapter XXI \textsection 2 Theorem 2]{DL6}, with replacement of their flow map for the constant velocity case with our more general one.
\end{proof}
In view of \cref{prop:BoundaryWithoutTau}, we observe that $D(A)\subset \widetilde W^p(\bbD_b)$, and \cref{eq:int_parts_p} holds for elements in $D(A)$.

Next, we treat the case of homogeneous boundary conditions, and, subsequently, we discuss nonhomogeneous boundary conditions by suitable liftings. The general case is then treated by superposition. 

\begin{lemma}
\label{lemma:SolutionBoundedHom}
Suppose that the data for problem \cref{prob:LiouvilleBoundNonHom1}--\cref{prob:LiouvilleBoundNonHom3} satisfies
\begin{align*}
    g=0,\qquad q\in L^p((0,T),L^p(\bbD_b)),\qquad u_0\in L^p(\bbD_b),\qquad p\in [1,\infty).
\end{align*}
Then problem \cref{prob:LiouvilleBoundNonHom1}--\cref{prob:LiouvilleBoundNonHom3} has a unique mild solution $u\in C^0([0,T],L^p(\bbD_b))$ given by
\begin{align}\label{eq:var_of_constants2}
    u(x,k,t) = (G(t)u_0)(x,k) + \int_0^t (G(t-s)q)(x,k,s)\ddd{s},\qquad 0\leq t\leq T,
\end{align}
with $G$ defined in \cref{eq:semigroupbounded}.
Furthermore, if the data additionally satisfies
\begin{align*}
    q\in C^1([0,T], L^p(\bbD_b)),\qquad u_0\in D(A).
\end{align*}
Then $u$ is a unique classical solution, i.e., $u\in C^1((0,T),L^p(\bbD_b))\cap C^0([0,T],D(A))$.
\end{lemma}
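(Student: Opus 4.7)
The strategy is to reduce the statement to abstract linear semigroup theory, since the homogeneous boundary condition has already been absorbed into the domain of the generator. Because $g=0$, the problem \cref{prob:LiouvilleBoundNonHom1}--\cref{prob:LiouvilleBoundNonHom3} coincides with the abstract Cauchy problem $u'(t) = A u(t) + q(t)$ with $u(0)=u_0$, where $A$ is the infinitesimal generator identified in \cref{prop:semigroupbounded}. In particular, $A$ generates a $C_0$-semigroup of contractions $\{G(t)\}_{t\geq 0}$ on $L^p(\bbD_b)$, and the condition $u(t)|_{\Gamma_-}=0$ is encoded in $D(A)$.

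For the mild-solution part, I would first observe that since $T<\infty$ and $p\geq 1$, H\"older's inequality in time yields the embedding $L^p((0,T),L^p(\bbD_b))\hookrightarrow L^1((0,T),L^p(\bbD_b))$, so $q$ is Bochner-integrable in time with values in $L^p(\bbD_b)$. The standard theory of inhomogeneous abstract Cauchy problems for $C_0$-semigroups (see, e.g., \cite[Chapter~4]{Pazy1983} or \cite[Chapter~XVII]{DL5}) then shows that the function $u$ defined by the variation-of-constants formula \cref{eq:var_of_constants2} is the unique mild solution and lies in $C^0([0,T],L^p(\bbD_b))$. Continuity in $t$ follows from strong continuity of $G(t)$ combined with dominated convergence applied to the convolution integral.

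For the classical-solution part, under the stronger hypotheses $u_0\in D(A)$ and $q\in C^1([0,T],L^p(\bbD_b))$, I would directly invoke the regularity result in \cite[p.~107]{Pazy1983}: these hypotheses are precisely what is needed to ensure that the mild solution given by \cref{eq:var_of_constants2} is continuously differentiable, takes values in $D(A)$ for every $t\in[0,T]$, and satisfies $u'(t)=Au(t)+q(t)$ pointwise. Uniqueness of the classical solution is immediate from the already established uniqueness of the mild solution.

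I do not foresee any serious obstacle in this argument: all the nontrivial work---verifying that $A$ generates a contraction semigroup and identifying its domain in terms of the trace condition on $\Gamma_-$---has been carried out in \cref{prop:semigroupbounded}, supported by the trace and density results of \cref{lemma:TraceTheorem} and \cref{lemma:density-smooth}. The only point requiring a short explicit remark is the embedding $L^p\hookrightarrow L^1$ in time, which is needed to fit the integrability hypothesis on $q$ into the standard framework for mild solutions.
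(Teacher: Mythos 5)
Your proposal is correct and follows essentially the same route as the paper: rewrite the problem with $g=0$ as the abstract Cauchy problem $u'=Au+q$, $u(0)=u_0$ for the generator $A$ from \cref{prop:semigroupbounded}, and invoke the standard semigroup results in \cite[pp.~106--107]{Pazy1983} for both the mild and the classical solution. Your explicit remark that $L^p((0,T),L^p(\bbD_b))\hookrightarrow L^1((0,T),L^p(\bbD_b))$ on the finite interval is a detail the paper leaves implicit, and it is a welcome clarification rather than a deviation.
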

\begin{proof}
Using the infinitesimal generator $A$ defined in \cref{prop:semigroupbounded} we can rewrite \cref{prob:LiouvilleBoundNonHom1}--\cref{prob:LiouvilleBoundNonHom3} with $g=0$ as abstract Cauchy problem
\begin{align*}
    \frac{\partial u}{\partial t} &= Au+q,\qquad 0<t\leq T,\\
    u(0) &= u_0.
\end{align*}
The assertion then follows from standard results in semigroup theory, see, e.g., \cite[pp. 106-107]{Pazy1983}.
\end{proof}
We next discuss the case $q=0$, $u_0=0$.
\begin{lemma}\label{lem:existence_nonhom_bc}
Let $g\in C^1([0,T],\bbT_-^p)$ for $1\leq p<\infty$, and suppose $g(x_0,k_0,0)=0$ for a.e. $(x_0,k_0)\in\Gamma_-$.
Then the function 
\begin{align}\label{eq:tmp1}
    \widetilde\ME g(x,k,t) \colonequals g(\phi_{-\tau_-(x,k)}(x,k),t-\tau_-(x,k))\chi_{\bbU}(x,k)Y(t-\tau_-(x,k))
\end{align}
is in $C^0([0,T],W^p(\bbD_b))\cap C^1([0,T],L^p(\bbD_b))$ and
is a solution to \cref{prob:LiouvilleBoundNonHom1}--\cref{prob:LiouvilleBoundNonHom3} with $q=0$ and $u_0=0$.
\end{lemma}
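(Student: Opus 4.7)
The plan is to leverage the explicit description of $\widetilde{\ME} g$ along characteristics. Parametrizing any $(x,k) \in \bbU$ as $\phi_s(x_0, k_0)$ with $(x_0, k_0) \in \Gamma_-$ and $s = \tau_-(x, k) \in [0, \tau_+(x_0, k_0)]$, the defining formula \cref{eq:tmp1} reduces to
\begin{equation*}
\widetilde{\ME} g(\phi_s(x_0, k_0), t) = g(x_0, k_0, t-s)\, Y(t-s).
\end{equation*}
Reading this identity at $s = 0$ yields $\widetilde{\ME} g = g$ on $\Gamma_- \times (0,T]$, i.e., the inflow boundary condition. Setting $t = 0$ yields the zero initial condition: the Heaviside factor $Y(-s)$ vanishes for $s > 0$, while at $s = 0$ we invoke the standing hypothesis $g(\cdot, \cdot, 0) = 0$.

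For the PDE I would differentiate the characteristic identity, using that derivation along the flow yields the Poisson bracket with $H$, namely $\partial_s \widetilde{\ME} g(\phi_s(x_0, k_0), t) = [H, \widetilde{\ME} g](\phi_s(x_0, k_0), t)$. The boundary contribution from differentiating the Heaviside factor is supported on $\{t = s\}$ and contains the factor $g(x_0, k_0, 0)$; this vanishes exactly by hypothesis, leaving
\begin{align*}
\partial_t \widetilde{\ME} g(\phi_s, t) &= \hphantom{-}\partial_t g(x_0, k_0, t-s)\, Y(t-s),\\
[H, \widetilde{\ME} g](\phi_s, t) &= -\partial_t g(x_0, k_0, t-s)\, Y(t-s).
\end{align*}
Summing gives $\partial_t \widetilde{\ME} g + [H, \widetilde{\ME} g] = 0$ on $\bbU \times (0, T]$; the identity extends trivially to $\bbD_b \setminus \bbU$, where $\widetilde{\ME} g \equiv 0$ by construction.

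For the regularity statement, I would use \cref{prop:TransformU} with $\kappa = T$ to represent
\begin{equation*}
\|\widetilde{\ME} g(\cdot, t)\|_{L^p(\bbD_b)}^p = \int_{\Gamma_-} \int_0^{\min(t, \tau_+)} |g(x_0, k_0, t-s)|^p\, |\nabla_k H \cdot \vec{n}|\, ds\, d\Gamma,
\end{equation*}
then combine $\min(t, \tau_+) \leq \tau_T$, which is equivalent to $\tau_\kappa$ by \cref{rem:traces}, with the representation $g(\cdot, \cdot, u) = \int_0^u \partial_t g(\cdot, \cdot, \sigma)\, d\sigma$ afforded by $g(\cdot, \cdot, 0) = 0$, and apply H\"older together with Fubini to obtain an estimate of the form $\|\widetilde{\ME} g(\cdot, t)\|_{L^p(\bbD_b)} \leq t\, \|\partial_t g\|_{C^0([0,T], \bbT_-^p)}$. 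Continuity of $\widetilde{\ME} g$ into $L^p(\bbD_b)$ follows by applying the same argument to the differences $g(\cdot, t_1) - g(\cdot, t_2)$ and using uniform continuity of $g$ into $\bbT_-^p$ on the compact interval $[0,T]$. The main obstacle is establishing the analogous $L^p(\bbD_b)$ bound for $\widetilde{\ME}(\partial_t g) = \partial_t \widetilde{\ME} g = -[H, \widetilde{\ME} g]$: because $\partial_t g$ need not vanish at $t = 0$, the telescoping trick used above is unavailable, and one instead performs a Fubini interchange converting the $s$-integration into integration over $\{(x_0,k_0) : \tau_+(x_0,k_0) \geq t - u\}$, a set on which $\tau_\kappa$ is bounded below by $t - u$, so that the $\tau_\kappa$-weight in the $\bbT_-^p$ norm absorbs the localization factor. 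Once these bounds are in place, continuity of $\partial_t \widetilde{\ME} g$ and $[H, \widetilde{\ME} g]$ into $L^p(\bbD_b)$ follows from continuity of $\partial_t g$ in $\bbT_-^p$, giving the two regularity conclusions; a density argument approximating $g$ by smoother boundary data justifies the pointwise differentiation used above.
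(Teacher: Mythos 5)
Your first two steps --- the characteristic identity $\widetilde\ME g(\phi_s(x_0,k_0),t)=g(x_0,k_0,t-s)Y(t-s)$, the verification of the PDE together with the boundary and initial conditions, and the $L^p$ estimate for $\widetilde\ME g$ itself via \cref{prop:TransformU} --- coincide in substance with the paper's proof. Your telescoping $g(\cdot,\cdot,u)=\int_0^u\partial_t g(\cdot,\cdot,\sigma)\ddd{\sigma}$ is, if anything, more careful than the paper's change of variables $\tilde s=s/\tau_t$, because it keeps the travel-time weight $\tau_t=\min(t,\tau_+)$ attached to the inner time integral, which is exactly what produces the $\bbT_-^p$-norm; that estimate is correct (up to a harmless factor $\max(1,T)^{1/p}$).

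The genuine gap sits precisely at the point you flag as ``the main obstacle'', and your proposed fix does not close it. Writing $h=\partial_t g$, your Fubini interchange gives
\begin{align*}
\|\widetilde\ME h(t)\|_{L^p(\bbD_b)}^p=\int_0^t\int_{\Gamma_-\cap\{\tau_+>t-u\}}|h(x_0,k_0,u)|^p\,|\nabla_k H\cdot\vec{n}|\ddd{\Gamma}\ddd{u},
\end{align*}
and on $\{\tau_+>t-u\}$ one indeed has $\tau_1\geq\min(t-u,1)$, so the inner integral is bounded by $\|h(u)\|_{\bbT_-^p}^p/\min(t-u,1)$. But $\int_0^t \min(t-u,1)^{-1}\ddd{u}$ diverges logarithmically at $u=t$, so the claimed absorption yields no finite bound in terms of $\sup_u\|h(u)\|_{\bbT_-^p}$. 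This is not a repairable technicality within your scheme: boundary points with short travel time $\tau_+\approx t-u$ carry weight $\tau_1\approx t-u$ in $\bbT_-^p$ but enter the display above with weight one, which is exactly borderline. Concentrating $h(u)$, for $u$ in dyadic windows $[t-2^{-j},t-2^{-j-1})$, on disjoint sets $A_j\subseteq\Gamma_-$ with $\tau_+\approx 2^{-j}$ (grazing directions; the unbounded $k$-variable supplies as much weighted surface measure as desired) and with $\|h(u)\|_{\bbT_-^p}^p\approx 1/j$ produces $h\in C^0([0,T],\bbT_-^p)$ with $\|\widetilde\ME h(t)\|_{L^p}^p\gtrsim\sum_j 1/j=\infty$. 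Hence no estimate of the form $\|\widetilde\ME(\partial_t g)(t)\|_{L^p}\lesssim\|\partial_t g\|_{C^0([0,T],\bbT_-^p)}$ is available, and your route to $\widetilde\ME g\in C^1([0,T],L^p(\bbD_b))$ --- and equally to $C^0([0,T],W^p(\bbD_b))$, since $[H,\widetilde\ME g]=-\widetilde\ME(\partial_t g)$ --- is blocked. In fairness, the paper's own proof disposes of this step with the phrase ``using similar arguments as above'', and its ``above'' computation silently replaces $g(x,k,\tilde s\,\tau_t(x,k))$ by $g(x,k,\tilde s)$ after the substitution $\tilde s=s/\tau_t$, which is the same difficulty in disguise; genuinely closing it requires more than $\partial_t g\in C^0([0,T],\bbT_-^p)$, for instance control of $\partial_t g$ in the unweighted trace space $\widetilde\bbT_-^p$, under which your Fubini argument does go through since the factor $\min(t-u,1)^{-1}$ is then never needed.
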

\begin{proof}
We start by observing that $\widetilde\ME g$ can be written as follows
\begin{align*}
    \widetilde\ME g(\phi_s(x_0,k_0),t+s) = g(x_0,k_0,t)
\end{align*}
for $0\leq s\leq \min(t,\tau_+(x_0,k_0))$ and $(x_0,k_0)\in\Gamma_-$, and $\widetilde{\ME g}=0$ for points not reached by the curves $(\phi_s(x_0,k_0),t+s)$. Hence, $\widetilde \ME g$ is constant along the curves $s\mapsto (\phi_s(x_0,k_0),t+s)$, and, therefore
\begin{align*}
    \frac{\partial \widetilde\ME g}{\partial t}+[H,\widetilde\ME g]=0 \quad\text{on } \bbD_b\times (0,T).
\end{align*}
Moreover, $\widetilde\ME g=g$ on $\Gamma_-\times (0,T)$ and $\widetilde\ME g=0$ on $\bbD_b\times \{0\}$, and, therefore, $\widetilde\ME g$ satisfies \cref{prob:LiouvilleBoundNonHom1}--\cref{prob:LiouvilleBoundNonHom3} with $u_0=0$ and $q=0$.

It remains to show the claimed regularity properties of $\widetilde\ME g$.
Recalling that $\tau_t(x,k)=\min(\tau_+(x,k),t)$ and using \cref{eq:BoundaryTransform} and \cref{eq:tmp1}, a change of variables $\tilde s=s/\tau_t$ yields that
\begin{align*}
    \|\widetilde\ME g(t)\|_{L^p(\bbD_b)}^p&=\int_{\Gamma_-} \int_0^{\tau_+(x,k)} |g(x,k,t-s)|^p Y(t-s) \ddd{s} |\nabla_k H\cdot \vec{n}|\ddd{\Gamma}
    \\&
    =\int_{\Gamma_-} \int_0^{\min(t,\tau_+(x,k))} |g(x,k,s)|^p \ddd{s} |\nabla_k H\cdot \vec{n}|\ddd{\Gamma}\\
     &=\int_{\Gamma_-} \int_0^{1} |g(x,k,\tilde s)|^p \ddd{\tilde s} \tau_t(x,k)|\nabla_k H\cdot \vec{n}|\ddd{\Gamma}\\
     &=\int_0^1 \int_{\Gamma_-} |g(x,k,\tilde s)|^p  \tau_t(x,k)|\nabla_k H\cdot \vec{n}|\ddd{\Gamma}\ddd{\tilde s}\\
     &\leq \min(1,T)\sup_{0\leq t\leq T} \|g(t)\|_{\bbT_-^p}^p,
\end{align*}
where we used \cref{rem:traces} in the last step.
Since $\tau_t\leq \tau_T$, the previous identities also show that $t\mapsto \|\widetilde\ME g(t)\|_{L^p(\bbD_b)}$ is continuous for all $0\leq t\leq T$.
Let $\phi\in C^\infty_c(0,T)$. Using \cref{eq:tmp1} and $g(x_0,k_0,0)=0$ for all $(x_0,k_0)\in\Gamma_-$, we obtain that for a.e. $(x,k)\in\bbD_b$
\begin{align*}
    \int_0^T \widetilde \ME g(x,k,t)\frac{\partial\phi}{\partial t}(t)\ddd{t} &=- \int_{\tau_-(x,k)}^T   \left(\widetilde\ME\frac{\partial g}{\partial t}\right)(x,k,t) \phi(t)\ddd{t}\\
    &=- \int_{0}^T   \left(\widetilde\ME\frac{\partial g}{\partial t}\right)(x,k,t) \phi(t)\ddd{t}.
\end{align*}
Since $\frac{\partial g}{\partial t}\in C^0([0,T],\bbT_-^p)$ by assumption, we thus obtain that $t\mapsto \widetilde\ME g(x,k,t)$ is weakly differentiable and, using similar arguments as above, $\widetilde\ME \frac{\partial g}{\partial t} \in C^0([0,T],L^p(\bbD_b)$.
The assertion then follows from $[H,\widetilde\ME g]= -\frac{\partial \widetilde\ME g}{\partial t}$.
\end{proof}
Combining the results of \cref{lemma:SolutionBoundedHom} and \cref{lem:existence_nonhom_bc}, we obtain the following well-posedness result for  \cref{prob:LiouvilleBoundNonHom1}--\cref{prob:LiouvilleBoundNonHom3}.
\begin{theorem}
\label{thrm: SolutionNonHomogeneous}
Let $1\leq p <\infty$, and assume that
\begin{align*}
    q\in C^1([0,T], L^p(\bbD_b)),\quad u_0\in D(A),\quad
    g\in C^1([0,T],\bbT_-^p),\quad g(0) = 0.
\end{align*}
Then the function $u\in C^1((0,T),L^p(\bbD_b))\cap C^0([0,T], W^p(\bbD_b))$ given by
\begin{align}
    \label{eq:SolutionRepNonHom}
     u(x,k,t) = (G(t)u_0)(x,k)&+\int_0^t(G(t-s)q)(x,k,s)\ddd{s} 
      \\
      &+g(\phi_{-\tau_-(x,k)}(x,k),t-\tau_-(x,k))\chi_{\bbU}(x,k)Y(t-\tau_-(x,k)) \nonumber
\end{align}
is the unique solution to \cref{prob:LiouvilleBoundNonHom1}--\cref{prob:LiouvilleBoundNonHom3}.
\end{theorem}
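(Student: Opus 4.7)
The plan is to prove the theorem by superposition, splitting the problem into two subproblems, each of which has been addressed in the preceding lemmas, and then combining the results through linearity of \cref{prob:LiouvilleBoundNonHom1}--\cref{prob:LiouvilleBoundNonHom3}.

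First, I would decompose the expected solution as $u = u_1 + u_2$, where $u_1$ solves \cref{prob:LiouvilleBoundNonHom1}--\cref{prob:LiouvilleBoundNonHom3} with data $(q, u_0, 0)$ (homogeneous boundary) and $u_2$ solves the same problem with data $(0, 0, g)$ (zero source and initial datum). Existence and regularity of $u_1$ is directly given by \cref{lemma:SolutionBoundedHom}: under the assumed regularity on $q$ and $u_0 \in D(A)$ we obtain $u_1 \in C^1((0,T),L^p(\bbD_b)) \cap C^0([0,T], D(A))$, and $u_1$ is represented by the first two terms of \cref{eq:SolutionRepNonHom}. Since $D(A)\subset W^p(\bbD_b)$, we in particular have $u_1\in C^0([0,T], W^p(\bbD_b))$.

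For $u_2$, I would invoke \cref{lem:existence_nonhom_bc}, which, using the compatibility condition $g(0)=0$ and the $C^1([0,T],\bbT_-^p)$-regularity, produces the function $\widetilde \ME g$ given in \cref{eq:tmp1}, lying in $C^0([0,T],W^p(\bbD_b))\cap C^1([0,T],L^p(\bbD_b))$ and satisfying \cref{prob:LiouvilleBoundNonHom1}--\cref{prob:LiouvilleBoundNonHom3} with $q=0$, $u_0=0$. This is precisely the third term in \cref{eq:SolutionRepNonHom}, and the identification $u_2 = \widetilde\ME g$ is immediate. Summing the two components, the function $u=u_1+u_2$ inherits the regularity $u\in C^1((0,T),L^p(\bbD_b))\cap C^0([0,T], W^p(\bbD_b))$, satisfies the PDE and the initial condition by linearity, and satisfies the boundary condition because $u_1$ vanishes on $\Gamma_-$ (it lies in $D(A)$) while $u_2$ matches $g$ on $\Gamma_-$ by construction.

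For uniqueness, suppose $v$ is another solution in the same class. Then $w=u-v$ satisfies \cref{prob:LiouvilleBoundNonHom1}--\cref{prob:LiouvilleBoundNonHom3} with $q=0$, $u_0=0$ and $g=0$, and in particular $w\in C^0([0,T], D(A))\cap C^1((0,T),L^p(\bbD_b))$. By the uniqueness part of \cref{lemma:SolutionBoundedHom}, $w\equiv 0$. The only delicate points I anticipate are, first, verifying that continuity of $u$ at $t=0$ into $W^p(\bbD_b)$ really requires $g(0)=0$ (otherwise $u_2$ would have a jump at $t=0$, visible since $\widetilde\ME g(0)=0$ while $g(\cdot,0)\neq 0$ on $\Gamma_-$ would violate the boundary trace continuity), and second, ensuring that $u_1$ and $u_2$ can genuinely be added in the claimed function space, which amounts to checking that $W^p(\bbD_b)$ is closed under addition with elements of $D(A)$—this is trivial since $D(A)\subset W^p(\bbD_b)$. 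These verifications are routine given the preceding lemmas, so the core work has already been distributed into \cref{lemma:SolutionBoundedHom} and \cref{lem:existence_nonhom_bc}.
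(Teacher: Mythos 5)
Your proposal is correct and follows essentially the same route as the paper: superposition of the homogeneous-boundary solution from \cref{lemma:SolutionBoundedHom} with the lifting $\widetilde\ME g$ from \cref{lem:existence_nonhom_bc}, and uniqueness by applying \cref{lemma:SolutionBoundedHom} to the difference with $q=0$, $u_0=0$, $g=0$. Your additional checks (the role of $g(0)=0$ and the inclusion $D(A)\subset W^p(\bbD_b)$) are sound and merely make explicit what the paper leaves implicit.
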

\begin{proof}
Uniqueness of a solution follows from \cref{lemma:SolutionBoundedHom}  with $g=0$, $q=0$ and $u_0=0$.
To obtain existence of a solution, we use superposition. Let $w\in C^1((0,T),L^p(\bbD_b))\cap C^0([0,T],W^p(D(A))$ denote the solution constructed in \cref{lemma:SolutionBoundedHom}.
Then $u=w+\widetilde\ME g$, with $\widetilde\ME g$ defined in \cref{eq:tmp1}, has the claimed properties.
\end{proof}
\begin{remark}
 We note that \cref{eq:SolutionRepNonHom} is well-defined even for less regular data; for instance, $q\in L^p((0,T),L^p(\bbD_b))$, $u_0\in L^p(\bbD_b)$ and $g\in C^0([0,T],\bbT_-^p)$, gives $u\in C^0([0,T],L^p(\bbD_b))$.
 Therefore, \cref{eq:SolutionRepNonHom} can be used to define mild solutions for the inhomogeneous problem \cref{prob:LiouvilleBoundNonHom1}--\cref{prob:LiouvilleBoundNonHom3}. In fact, \cref{eq:SolutionRepNonHom} can also be derived from \cref{eq:var_of_constants2} and the usual argument of homogenization of the boundary conditions using the lifting \cref{eq:lifting} constructed in the proof of the trace lemma, see \cite{Schwenninger:2020} for a proper introduction of such concepts ensuring that the formula \cref{eq:SolutionRepNonHom} is independent of the employed lifting even for less regular data.
\end{remark}
\section{Well-posedness of the full RTE on bounded domains}\label{sec:existence_full_rte_bounded}
We now consider the full matrix radiative transfer equation in \cref{eq:rte_W} on $\bbD_b$.  Our approach follows that of \Cref{section: CoupleScatter}.
In the case of a bounded domain $\bbD_b$ the results on the coupling and scattering operator obtained in \Cref{section: CoupleScatter} remain true. In particular \cref{lemma:coupling} and \cref{proposition:Scattering} are still valid on $L^p(\bbD_b,\HM)$. Thus, the operator $\Sigma-S-N$ is bounded on $L^p(\bbD_b,\HM)$. It remains to define a suitable generator.

Recall that $[H,\MV]$ denotes the matrix-valued function obtained from applying the Poisson bracket to each entry of $\MV$.
Similar to \Cref{section: CoupleScatter}, we define the space 
\begin{align*}
W^p(\bbD_b,\HM) \colonequals \{ \MV \in L^p(\bbD_b,\HM) \colon [H,\MV] \in L^p(\bbD_b,\HM)\}.
\end{align*}
The infinitesimal generator $A$ defined in \cref{prop:semigroupbounded} for the scalar case can then be naturally extended to a generator $\MA$ for the matrix case as in \Cref{section: CoupleScatter}, with domain
\begin{align*}
    D(\MA) \colonequals \left\{\MV\in W^p(\bbD_b,\HM): \MV|_{\Gamma_-}=0\right\}.
\end{align*}
We denote, in slight abuse of notation, also the trace spaces for functions in $W^p(\bbD_b,\HM)$ with $ \bbT^p_\pm$ and $\widetilde \bbT^p_\pm$.
With this setting, well-posedness for the full matrix equation is established next.
\begin{theorem}
\label{thrm:WellPosednesBounded}
Let $1\leq p<\infty$ and suppose that
\begin{align*}
   \MF\in C^1((0,T),L^p(\bbD_b,\HM)),&\quad \MW_0\in D(\MA),\\
    {\MG}\in C^1((0,T), \bbT_-^p),&\quad \MG(0) = 0.
\end{align*}
Then the problem
\begin{alignat}{6}
    \frac{\partial \MW}{\partial t} + [H,\MW]-N(\MW) + \Sigma \MW&= S(\MW)+\MF &\quad& \text{on }\bbD_b\times(0,T),\label{eq:MatrixProblemNonHomogeneous1}\\
    \MW &= \MG & &\text{on }\Gamma_-\times(0,T), \label{eq:MatrixProblemNonHomogeneous2}\\
    \MW(0)&= \MW_0 &&\text{on }\bbD_b \label{eq:MatrixProblemNonHomogeneous3}
\end{alignat}
has a unique classical solution $\MW\in C^0([0,T],L^p(\bbD_b,\HM))\cap C^1((0,T),W^p(\bbD_b,\HM))$.
% Let $1\leq p<\infty$ and assume the following regularities for the data,
% \begin{align*}
%     % \widetilde\MW\in C([0,T], L^p(\Gamma_-;\tau_\kappa |\nabla_k H\cdot\vec{n}|,\HM)),\quad 
%     % \frac{\partial\widetilde\MW}{\partial t} \in L^p((0,T), L^p(\Gamma_-;\tau_\kappa |\nabla_k H\cdot\vec{n}|,\HM)),\\
%     \MF \in L^p((0,T), L^p(\bbD_b,\HM)),&\quad \MW_0 \in L^p(\bbD_b,\HM)\\
%     \widetilde\MW\in C([0,T], \bbT^p_-),&\quad 
%     \frac{\partial\widetilde\MW}{\partial t} \in L^p((0,T), \bbT^p_-).
% \end{align*}
% Then the problem
% \begin{alignat}{6}
%     \frac{\partial \MW}{\partial t} + [H,\MW]-N(\MW) + \Sigma \MW&= S(\MW)+\MF &\quad& \text{on }\bbD_b\times(0,T],\label{eq:MatrixProblemNonHomogeneous1}\\
%     \MW &= \MG & &\text{on }\Gamma_-\times(0,T],\label{eq:MatrixProblemNonHomogeneous2}\\
%     \MW(0)&= \MW_0 &&\text{on }\bbD_b \label{eq:MatrixProblemNonHomogeneous3}
% \end{alignat}
% has a unique mild solution $\MW\in C^0([0,T],L^p(\bbD_b,\HM))$. Furthermore, if 
% \begin{align*}
%     \MF\in C^1((0,T),L^p(\bbD_b,\HM)),&\quad \MW_0\in D(\MA),\\
%     \MG\in C^1((0,T), \bbT_-^p),&\quad \MG(0) = 0.
% \end{align*}
% Then the mild solution is a classical solution.
\end{theorem}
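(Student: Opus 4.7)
The plan is to mirror the proof strategy of Theorem~\ref{thrm: ExistenceFull} (the analogous statement on the full space) and combine it with the homogenization-of-boundary-conditions argument used in Theorem~\ref{thrm: SolutionNonHomogeneous} (the scalar bounded-domain case). That is, I would first reduce to zero inflow data by subtracting a lift $\widetilde\ME\MG$ of $\MG$, and then handle the resulting problem with homogeneous boundary conditions via bounded perturbation of the semigroup generated by $\MA$.

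\textbf{The generator and its perturbation.} By \cref{prop:semigroupbounded} extended entry-wise (or, equivalently, componentwise in the Stokes representation, which by \cref{eq:sing_U} is topologically compatible with $L^p(\bbD_b,\HM)$), the operator $\MA$ with the stated domain generates a strongly continuous semigroup of contractions on $L^p(\bbD_b,\HM)$. The proofs of \cref{lemma:coupling} and \cref{proposition:Scattering} use only the pointwise structure of $N$, $S$ and $\Sigma$ in the $(x,k)$-variables, so they transfer verbatim to $\bbD_b$; hence $N - \Sigma + S$ is a bounded linear operator on $L^p(\bbD_b,\HM)$. The bounded perturbation theorem (\cite[p.~76]{Pazy1983}) then yields that $\MA + N + S - \Sigma$ with domain $D(\MA)$ generates a strongly continuous semigroup $L(t)$ on $L^p(\bbD_b,\HM)$.

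\textbf{Lifting the matrix boundary datum.} I would construct a matrix-valued lift $\widetilde\ME\MG$ of $\MG$ by applying the scalar lift from \cref{eq:tmp1} to each of the four Stokes parameters of $\MG$ (or to each entry of $\MG$; Hermiticity is preserved because transport along $\phi_t$ is a linear pointwise pullback which commutes with the Hermitian conjugation). By \cref{lem:existence_nonhom_bc}, $\widetilde\ME\MG \in C^0([0,T],W^p(\bbD_b,\HM)) \cap C^1([0,T],L^p(\bbD_b,\HM))$, agrees with $\MG$ on $\Gamma_-\times(0,T)$, vanishes at $t=0$ (thanks to $\MG(0)=0$), and satisfies
\[
\frac{\partial \widetilde\ME\MG}{\partial t} + [H,\widetilde\ME\MG] = 0 \quad \text{on } \bbD_b\times(0,T).
\]

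\textbf{Assembly and uniqueness.} Setting $\widetilde\MW \colonequals \MW - \widetilde\ME\MG$, the problem \cref{eq:MatrixProblemNonHomogeneous1}--\cref{eq:MatrixProblemNonHomogeneous3} becomes the homogeneous-boundary abstract Cauchy problem
\[
\frac{\partial \widetilde\MW}{\partial t} = (\MA + N + S - \Sigma)\widetilde\MW + \widetilde\MF, \qquad \widetilde\MW(0) = \MW_0,
\]
with $\widetilde\MF \colonequals \MF + (N - \Sigma + S)\widetilde\ME\MG$. Since $N$, $S$, $\Sigma$ are bounded and $\widetilde\ME\MG \in C^1([0,T],L^p(\bbD_b,\HM))$, the map $t\mapsto (N-\Sigma+S)\widetilde\ME\MG(t)$ is $C^1$ into $L^p(\bbD_b,\HM)$, hence $\widetilde\MF \in C^1((0,T),L^p(\bbD_b,\HM))$. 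Because $\MW_0\in D(\MA)$ and $\MG(0)=0$, the initial datum for $\widetilde\MW$ lies in $D(\MA)$ as well. Standard semigroup theory (\cite[pp.~106--107]{Pazy1983}) then furnishes a unique classical solution $\widetilde\MW$, and $\MW = \widetilde\MW + \widetilde\ME\MG$ has the claimed regularity. Uniqueness for the original problem follows by applying the same semigroup result to the difference of two solutions, which satisfies the homogeneous problem with zero data.

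\textbf{Main obstacle.} Conceptually the hardest point is the bookkeeping for the matrix-valued lift: one has to verify that the scalar construction of \cref{lem:existence_nonhom_bc} genuinely produces a map into $L^p(\bbD_b,\HM)$ with the correct time regularity and compatible trace on $\Gamma_-$, and that Hermiticity is preserved pointwise after lifting. Once this is settled, the remainder is a routine combination of bounded perturbation and homogenization of boundary conditions.
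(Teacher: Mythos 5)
Your proposal is correct and follows essentially the same route as the paper: a matrix-valued extension of the scalar lift from \cref{lem:existence_nonhom_bc} to homogenize the boundary datum, reduction to an abstract Cauchy problem with homogeneous inflow data, bounded perturbation of $\MA$ by $N+S-\Sigma$ via \cref{lemma:coupling} and \cref{proposition:Scattering}, and uniqueness from the homogeneous problem. Incidentally, your modified source $\widetilde\MF=\MF+(N-\Sigma+S)\widetilde\ME\MG$ carries the correct signs, whereas the paper writes $\widetilde{\MF}=\MF+\Sigma\MV+N(\MV)-S(\MV)$, which appears to contain sign slips on the $\Sigma$ and $S$ terms.
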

\begin{proof}
By extending \cref{lem:existence_nonhom_bc} to the matrix case, we can construct a matrix-valued lifting  $\MV\in C^0([0,T],W^p(\bbD_b,\HM))\cap C^1([0,T],L^p(\bbD_b,\HM))$ such that
\begin{alignat*}{6}
        \frac{\partial \MV}{\partial t}+ [H,\MV(t)]&= 0 &\quad& \text{on }\bbD_b \times (0,T),\\
        \MV &= \MG &&\text{on }\Gamma_-\times (0,T),\\
        \MV(0) &= 0 &&\text{on }\bbD_b.
\end{alignat*}
Next, consider the following problem with homogeneous boundary conditions,
\begin{alignat*}{4}
    \frac{\partial \MU}{\partial t} +[H,\MU]-N(\MU) + \Sigma \MU&= S(\MU)+\widetilde{\MF} &\quad& \text{on }\bbD_b\times(0,T),\\
    \MU &= 0 &&\text{on }\Gamma_-\times(0,T),\\
     \MU(0) &= \MW_0 &&\text{on }\bbD_b,
\end{alignat*}
with $\widetilde{\MF} = \MF+\Sigma\MV+N(\MV)-S(\MV) \in C^1((0,T),L^p(\bbD_b,\HM))$.
%
% By the assumptions it then holds that $\MV(0)\in L^p(\bbD_b,\HM)$ and $\widetilde{\MF}\in L^p((0,T),L^p(\bbD_b,\HM))$. 
Since the statements of \cref{lemma:coupling} and \cref{proposition: ScatterBound} remain true for the bounded domain $\bbD_b$, the operator $\Sigma-S+N$ is a bounded perturbation of $\MA$ on $L^p(\bbD_b,\HM)$. Thus, there exists a unique classical solution $\MU\in C^0([0,T],D(\MA))\cap C^1((0,T),L^p(\bbD_b,\HM))$ to this problem \cite[p. 106]{Pazy1983}. Consequently, $\MW=\MU+\MV$ is a classical solution to \cref{eq:MatrixProblemNonHomogeneous1}--\cref{eq:MatrixProblemNonHomogeneous3} as claimed.
Uniqueness of the solution can be shown in the same manner as in \cref{thrm: SolutionNonHomogeneous}.
\end{proof}
\section{Properties of the coherence matrix}
\label{section: Additional}
With the well-posedness of the radiative transfer equation established, we here derive some additional properties of the solution. In the first subsection we derive an explicit representation of the time evolution of the solution norm and in the second subsection we show positivity of the solution.

\subsection{Conservation and dissipation of the norm of the solution}
\label{subsec72}
We start by considering the radiative transfer equation without interaction with the background medium ($S=\Sigma =\MF = 0)$. In this setting it can be shown that the solution norm is conserved up to in-, and outflow over the boundary of the domain.
\begin{theorem}
\label{thrm: NormNoScatter}
In addition to the assumptions of \cref{thrm:WellPosednesBounded} suppose that $\MG\in C^0([0,T],\widetilde\bbT_-^p)$, $\MF=0$, and $\Sigma=0$. The solution $\MW$ of \cref{eq:MatrixProblemNonHomogeneous1}--\cref{eq:MatrixProblemNonHomogeneous3} then satisfies
\begin{align*}
     \|\MW(t)\|^p_{L^p(\bbD_b,\HM)}+\int_0^t \|\MW(s)\|_{\widetilde\bbT_+^p}^{p} \ddd{s} 
    = \|\MW(0)\|^p_{L^p(\bbD_b,\HM)}+\int_0^t\|\widetilde\MW(s)\|_{\widetilde\bbT_-^p}^{p} \ddd{s}.
\end{align*}
\end{theorem}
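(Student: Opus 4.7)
The plan is to reduce the assertion to the scalar divergence identity \cref{eq:divergence} applied to $u(x,k,t) := \|\MW(x,k,t)\|_{S^p}^p$, after establishing that $u$ itself satisfies a scalar Liouville equation. The crucial observation is that the coupling term $N(\MW)$ does not contribute to the evolution of the Schatten norm, so only the transport part is effectively present.

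For Step 1 (the pointwise invariance), I would fix $(x_0,k_0)\in\bbD_b$ and restrict the matrix equation to the characteristic $s\mapsto \phi_s(x_0,k_0)$. Since $S=\Sigma=\MF=0$, \cref{eq:MatrixProblemNonHomogeneous1} reduces along this curve to the matrix ODE
\[
\frac{d}{ds}\MW(\phi_s(x_0,k_0),s) \;=\; n(\phi_s(x_0,k_0))\,\bigl[J,\MW\bigr],
\]
which is solved by the unitary conjugation $\MW(\phi_s,s)=R(s)\,\MW(x_0,k_0,0)\,R(s)^*$ with $R(s)=\exp\!\bigl(J\int_0^s n(\phi_\tau)\,d\tau\bigr)$; unitarity follows from $J^*=-J$. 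Since unitary conjugation preserves singular values, $u(\phi_s(x_0,k_0),s)$ is constant in $s$, hence $u$ satisfies $\partial_t u + [H,u]=0$ classically. (Alternatively, one can apply \cref{prop:CoupledFunction} directly since \cref{eq:sing_U} expresses $u$ as a function of $I$, $Q^2+U^2$ and $V$; however the characteristic viewpoint is cleaner because it sidesteps differentiability issues of that explicit function at points where the eigenvalues of $\MW$ coincide or vanish.)

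For Step 2, I integrate the scalar Liouville equation over $\bbD_b$ and apply \cref{eq:divergence} to obtain
\begin{align*}
\frac{d}{dt}\|\MW(t)\|_{L^p(\bbD_b,\HM)}^p
\;=\;\int_{\bbD_b}[u,H]\,\dx\dk
\;=\;\|\MW(t)\|_{\widetilde \bbT_+^p}^p-\|\MG(t)\|_{\widetilde \bbT_-^p}^p.
\end{align*}
To justify the divergence identity for $u$, I need $u\in\widetilde W^1(\bbD_b)$. The assumption $\MG\in C^0([0,T],\widetilde \bbT_-^p)$ provides the inflow trace integrability $u|_{\Gamma_-}\in L^1(\Gamma_-;|\nabla_k H\cdot\vec n|\,d\Gamma)$; then \cref{prop:BoundaryWithoutTau} upgrades this to membership in $\widetilde W^1(\bbD_b)$ and yields an integrable outflow trace automatically. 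Integrating in time from $0$ to $t$ delivers the claimed identity (with $\MW|_{\Gamma_-}=\MG$ appearing on the right-hand side).

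The main obstacle is ensuring that the scalar divergence identity actually applies to $u$, because a priori $u$ inherits only the regularity that one reads off from $\MW$, and the map $\MW\mapsto \|\MW\|_{S^p}^p$ is not smooth at matrices with coincident or zero eigenvalues; the characteristic-based derivation in Step 1 avoids any chain rule and gives $[u,H]\in L^1(\bbD_b)$ pointwise, and the extra regularity imposed on $\MG$ takes care of the boundary traces. Everything else is routine integration.
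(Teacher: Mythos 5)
Your skeleton is essentially the paper's: show that $u=\|\MW\|_{S^p}^p$ obeys a scalar conservation law because the coupling $N$ cannot change singular values, then integrate over $\bbD_b$ and use the divergence identity, with $\MG\in C^0([0,T],\widetilde\bbT_-^p)$ plus \cref{prop:BoundaryWithoutTau} supplying the outflow trace. Your Step 1 differs in a worthwhile way: the paper expresses $\|\MW\|_{S^p}^p$ via \cref{eq:sing_U} as a function of $(I,Q^2+U^2,V)$ and invokes \cref{prop:CoupledFunction}, which tacitly differentiates a function that is not smooth where the eigenvalues of $\MW$ coincide or vanish (for $p\neq 2$); your representation $\MW(\phi_s,s)=R(s)\MW(x_0,k_0,0)R(s)^*$ with $R(s)=\exp\bigl(J\int_0^s n\,d\tau\bigr)$ unitary avoids that chain rule and is the more robust argument. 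Two small caveats on this step: the classical solution is $C^1$ only in the Bochner sense, so the matrix ODE along a fixed characteristic holds a priori only along almost every flow line — derive it from the Duhamel (mild) formulation, where $N$ is a bounded perturbation, and then ODE uniqueness gives the conjugation representation a.e.; and characteristics entering $\Gamma_-$ at positive times carry the datum $\MG$ rather than initial data, though the same conjugation argument applies from the entry point.

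The genuine defect is the sign in your Step 2 display. With the paper's convention $[f,g]=\nabla_kf\cdot\nabla_xg-\nabla_xf\cdot\nabla_kg$, integration by parts gives
\begin{align*}
\int_{\bbD_b}[u,H]\dx\dk=-\int_{\Gamma_+}u\,|\nabla_kH\cdot\vec{n}|\ddd{\Gamma}+\int_{\Gamma_-}u\,|\nabla_kH\cdot\vec{n}|\ddd{\Gamma},
\end{align*}
as a one-dimensional check with $H=\nu k$, $\nu$ constant, on an interval confirms: $\partial_t u=[u,H]$ is transport toward $\Gamma_+$, so mass leaves through $\Gamma_+$ and enters through $\Gamma_-$. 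Your formula $\frac{d}{dt}\|\MW(t)\|^p_{L^p(\bbD_b,\HM)}=\|\MW(t)\|^p_{\widetilde\bbT_+^p}-\|\MG(t)\|^p_{\widetilde\bbT_-^p}$, integrated in time, therefore produces the claimed identity with the boundary terms on the wrong sides. You inherited this from \cref{eq:divergence} as printed, which is in fact the identity for $[H,w]$ rather than $[w,H]$ (a sign misprint in the paper; its own proof of the theorem uses the corrected version $\frac{d}{dt}\|\MW\|^p=-\int_{\Gamma_+}\cdots+\int_{\Gamma_-}\cdots$). Once you flip this sign, your argument lands exactly on the assertion, and your treatment of the trace regularity via \cref{prop:BoundaryWithoutTau} matches the paper's.
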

\begin{proof}
Note that $\Sigma=0$ implies $S=0$ by \cref{eq:normalization}.
To compute the change of $\|\MW(t)\|_{S^p}^p$ over time, 
we express $\MW$ in terms of the Stokes parameters $(I,Q,U,V)$,
and recall that, cf. \cref{eq:sing_U},
\begin{align*}
    \|\MW\|^p_{L^p(\bbD_b,\HM)} 
    % &=\int_{\bbD_b}\|\MW\|_{S^p}^pdxdk
    &= \int_{\bbD_b}\Bigg[\Big(\frac{1}{4}\big[I^2+Q^2+U^2+V^2]+\frac{1}{2}I\big[Q^2+U^2+V^2]^{1/2}\Big)^{p/2}\\
    &\quad+\Big(\frac{1}{4}\big[I^2+Q^2+U^2+V^2]-\frac{1}{2}I\big[Q^2+U^2+V^2]^{1/2}\Big)^{p/2}\Bigg]\dx\dk.
\end{align*}
The integrand is a function of the form $f(I,Q^2+U^2+V^2)$. Since, without the scattering and source terms, the components $I,Q,U$ and $V$ of the matrix $\MW$ are solutions to \cref{eq:evo_I}-\cref{eq:evo_V}, 
\cref{prop:CoupledFunction} implies
\begin{align*}
   \frac{d}{dt}\|\MW\|^p_{L^p(\bbD_b,\HM)} =\int_{\bbD_b}[\|\MW\|_{S^p}^p,H]\dx\dk.
\end{align*}
% Using the notation $\nabla= \begin{pmatrix}
% \nabla_k\\
% \nabla_x
% \end{pmatrix}$ and $\vec{F}=\begin{pmatrix}
% \nabla_xH\\
% -\nabla_kH
% \end{pmatrix}$
% this can be written as
% \begin{align*}
%     \frac{d}{dt}\|\MW\|^p_{L^p(\bbD_b,\HM)} =\int_{\bbD_b}\vec{F}\cdot\nabla \|\MW\|_{S^p}^p\dx\dk.
% \end{align*}
% By \cref{lemma:density-smooth} and the divergence theorem it then holds that
Since $\MW(t)_{\mid\Gamma_-}=\MG(t)_{\mid\Gamma_-}\in \widetilde\bbT_-^p$, %$L^p(\Gamma_-;|\nabla_kH\cdot\vec{n}|,\HM)$, 
\cref{prop:BoundaryWithoutTau} allows to use
formula \cref{eq:divergence}, which yields
% \begin{align*}
%     \frac{d}{dt}\|\MW\|^p_{L^p(\bbD_b,\HM)} =-\int_{\bbD_b}\|\MW\|_{S^p}^p(\nabla\cdot\vec{F})\dx\dk+\int_{\partial \bbD_b}\|\MW\|_{S^p}^p\vec{F}\cdot\vec{n}\ddd{\Gamma} = \int_{\partial \bbD_b}\|\MW\|_{S^p}^p\vec{F}\cdot\vec{n}\ddd{\Gamma}.
% \end{align*}
% Since the boundary $\partial \bbD_b$ is given up to a null set by $\Gamma_-\cup\Gamma_+$ this reduces to
\begin{align*}
    \frac{d}{dt}\|\MW\|^p_{L^p(\bbD_b,\HM)} =-\int_{\Gamma_+}\|\MW\|_{S^p}^{p}|\nabla_kH\cdot \vec{n}|\ddd{\Gamma}+\int_{\Gamma_-}\|\MW\|_{S^p}^{p}|\nabla_kH\cdot \vec{n}|\ddd{\Gamma},
\end{align*}
from which we obtain the assertion upon integration over $t$.
\end{proof}
% To analyze the behaviour of the solution norm in the presence of scattering we require the following result
% \begin{lemma}
% \label{lemma:ScatterEstimate}
% For all $\MU\in L^p(\bbD_b,\HM)$, $1\leq p<\infty$ it holds that 
% \begin{align*}
%     \int_{\bbD_b}\Tr\big[\big(\Sigma \MU - S(\MU)\big)\MU|\MU|^{p-2}\big]\dx\dk\geq 0.
% \end{align*}
% \end{lemma}
% \begin{proof}
% {\color{red} Is there any difference except for $\bbD_b$ instead of $\bbD$? If not, remove proof. (You can just apply \cref{proposition: ScatterBound} to $\MU$ extended by zero. }
% The proof mostly relies on \cref{proposition: ScatterBound}. It holds that
% \begin{align*}
%     &\int_{\bbD_b}\Tr\big[\big(\Sigma \MU - S(\MU)\big)\MU|\MU|^{p-2}\big]\dx\dk\\
%     &\geq\int_{\bbD_b}\Tr\big[\Sigma |\MU|^p\big]\dx\dk - \int_{\bbD_b}|\Tr\big[S(\MU)\MU|\MU|^{p-2}\big]|\dx\dk\\
%     &\geq\|\Sigma^{1/p}\MU\|_{L^p(\bbD_b,\HM)}^p-\|\Sigma^{1/p}\MU\|_{L^p(\bbD_b,\HM)}\cdot\|\Sigma^{1/q}\MU|\MU|^{p-2}\|_{L^q(\bbD_b,\HM)}\\
%     &=\|\Sigma^{1/p}\MU\|_{L^p(\bbD_b,\HM)}^p-\|\Sigma^{1/p}\MU\|_{L^p(\bbD_b,\HM)}\cdot\|\Sigma^{1/p}\MU\|_{L^p(\bbD_b,\HM)}^{p-1}=0.
% \end{align*}
% \end{proof}
% By combining this corollary with \cref{thrm: NormNoScatter}, 
Using \cref{proposition: ScatterBound},
it can be shown that the presence of scattering leads to dissipation.
\begin{lemma}
\label{thrm: NormEstimation}
In addition to the assumptions of \cref{thrm:WellPosednesBounded} suppose that $\MG\in C^0([0,T],\widetilde\bbT_-^p)$. The solution $\MW$ of \cref{eq:MatrixProblemNonHomogeneous1}--\cref{eq:MatrixProblemNonHomogeneous3} then satisfies
\begin{align*}
    \|\MW(t)\|^p_{L^p(\bbD_b,\HM)} +\int_0^t\|\MW(s)\|_{\widetilde\bbT_+^p}^{p}\ddd{s}
    &\leq \|\MW(0)\|^p_{L^p(\bbD_b,\HM)} +\int_0^t\|\MG(s)\|_{\widetilde\bbT_-^p}^{p}\ddd{s}\\
    &\quad+\int_{0}^t\|\MF(s)\|_{L^p(\bbD,\HM)} \|\MW(s)\|_{L^p(\bbD_b,\HM)}^{p-1}\ddd{s}.
\end{align*}
% {\color{red} The result is fine. A stronger(?) version: If $\MW\succeq 0$ and $-\MF\succeq 0$, then $\Tr\big[\MF\MW|\MW|^{p-2}\big] \leq 0$...I would take that if the more general case is not needed below}
\end{lemma}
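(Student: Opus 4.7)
The plan is to differentiate $t\mapsto\|\MW(t)\|^p_{L^p(\bbD_b,\HM)}$ in time, substitute the equation \cref{eq:MatrixProblemNonHomogeneous1}, bound the four resulting contributions, and integrate. By the Daleckii--Krein chain rule for the trace functional $\MW\mapsto\Tr[|\MW|^p]$ on Hermitian matrices, together with the regularity furnished by \cref{thrm:WellPosednesBounded},
\begin{align*}
\frac{d}{dt}\|\MW(t)\|^p_{L^p(\bbD_b,\HM)} = p\int_{\bbD_b}\Tr\bigl[P(t)\,\partial_t\MW(t)\bigr]\dx\dk,
\end{align*}
where $P\colonequals|\MW|^{p-1}\mathrm{sgn}(\MW)$ is Hermitian with $|P|=|\MW|^{p-1}$; in particular $\|P\|_{S^q}^q=\|\MW\|_{S^p}^p$ for the dual exponent $q=p/(p-1)$.

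For the transport term $-[H,\MW]=[\MW,H]$ acting componentwise, the chain rule for the Poisson bracket yields the pointwise identity $p\,\Tr\bigl[P\,[\MW,H]\bigr]=[\|\MW\|_{S^p}^p,H]$; the divergence formula \cref{eq:divergence} (applied as in the proof of \cref{thrm: NormNoScatter}), combined with \cref{prop:BoundaryWithoutTau} and $\MW|_{\Gamma_-}=\MG$, then converts the volume integral into $-\|\MW\|^p_{\widetilde\bbT_+^p}+\|\MG\|^p_{\widetilde\bbT_-^p}$. The coupling term contributes zero pointwise in $(x,k)$: since $e^{sN}$ preserves the eigenvalues of $\MW$ (cf.\ the proof of \cref{thrm: ExistenceFull}), $\|e^{sN}\MW\|_{S^p}^p$ is $s$-independent, and differentiating at $s=0$ gives $p\,\Tr[P\,N(\MW)]=0$. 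The crucial cancellation concerns scattering and absorption: applying \cref{proposition: ScatterBound} with the pair $(p,q)$ and using the identity $\|\Sigma^{1/q}P\|^q_{L^q(\bbD_b,\HM)}=\int_{\bbD_b}\Sigma\|\MW\|_{S^p}^p\dx\dk$ yields
\begin{align*}
\int_{\bbD_b}\bigl|\Tr[S(\MW)P]\bigr|\dx\dk\le\|\Sigma^{1/p}\MW\|^p_{L^p(\bbD_b,\HM)},
\end{align*}
which matches exactly the absorption contribution $\int_{\bbD_b}\Tr[\Sigma\MW\,P]\dx\dk=\|\Sigma^{1/p}\MW\|^p_{L^p(\bbD_b,\HM)}$; their combined contribution is thus non-positive. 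Finally, Hölder's inequality for Schatten and for $L^p$-norms gives $\int_{\bbD_b}\bigl|\Tr[P\MF]\bigr|\dx\dk\le\|\MW\|^{p-1}_{L^p(\bbD_b,\HM)}\|\MF\|_{L^p(\bbD_b,\HM)}$. Integrating the resulting differential inequality from $0$ to $t$ produces the claim.

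The main technical obstacle is the rigorous chain rule for $\frac{d}{dt}\Tr[|\MW|^p]$ applied to a Hermitian matrix field whose eigenvalues may cross zero, where $\mathrm{sgn}(\MW)$ is delicate (particularly as $p\to 1$). My plan is to regularise $|x|^p$ by a convex $C^1$ function $\phi_\varepsilon$ with bounded derivative converging monotonically to $|x|^p$, apply the straightforward chain rule for $\Tr[\phi_\varepsilon(\MW)]$ (which follows directly from the spectral theorem), and then pass to the limit by dominated convergence using the integrability from \cref{thrm:WellPosednesBounded}. Alternatively, one may first establish the estimate for data yielding classical solutions (as in \cref{thrm:WellPosednesBounded}: $\MW_0\in D(\MA)$, $\MF\in C^1$, $\MG\in C^1$ with $\MG(0)=0$), where $\MW$ is componentwise $C^1$, and then extend to general data by density together with lower semicontinuity of the $\widetilde\bbT_\pm^p$- and $L^p$-norms.
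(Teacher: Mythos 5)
Your proposal is correct and takes essentially the same route as the paper: your multiplier $P=|\MW|^{p-1}\mathrm{sgn}(\MW)$ coincides with the paper's $\MW|\MW|^{p-2}$, the transport term is converted to the boundary terms via \cref{eq:divergence} exactly as in \cref{thrm: NormNoScatter}, the scattering--absorption cancellation is \cref{proposition: ScatterBound} applied with $\MV=\MW|\MW|^{p-2}$ (the paper's \cref{eq:dissipativity}), and the source term is handled by the same H\"older step. Your eigenvalue-preservation argument for $\Tr[P\,N(\MW)]=0$ is merely a repackaging of the paper's Stokes-rotation treatment, and your regularization/density remark addresses a chain-rule subtlety (eigenvalues crossing zero for $p<2$) that the paper leaves implicit.
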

\begin{proof}
Observe that $\|\MW\|_{S^p}^p=\Tr(\MW\MW |\MW|^{p-2})$. 
We multiply \cref{eq:rte_W} by $\MW|\MW|^{p-2}$ and take traces. The terms involving the Poisson bracket $[H,\MW]$ and $N(\MW)$ can be treated as in the proof of \cref{thrm: NormNoScatter}.
%  , taking the trace and integrating we obtain an equation for the norm of $\MW$:
% \begin{align*}
%     \frac{\partial}{\partial t}\|\MW\|_{L^p(\bbD_b,\HM)}^p+\int_{\bbD_b}\Tr\big[\big(\nabla_kH\cdot\nabla_x\MW-\nabla_xH\cdot\nabla_k\MW -N(\MW)\big)\MW|\MW|^{p-2}\big]\dx\dk\\
%     =\int_{\bbD_b}\Tr\big[\big(S(\MW)-\Sigma\MW\big)\MW|\MW|^{p-2}\big]\dx\dk+\int_{\bbD_b}\Tr\big[\MF\MW|\MW|^{p-2}\big]\dx\dk.
% \end{align*}
% By \cref{thrm: NormNoScatter} the second term on the left-hand side can be rewritten as
% \begin{align*}
%     \int_{\bbD_b}\Tr\big[\big(\nabla_kH\cdot\nabla_x\MW-\nabla_xH\cdot\nabla_k\MW -N(\MW)\big)\MW|\MW|^{p-2}\big]\dx\dk= \int_{\partial \bbD_b}\|\MW\|_{S^p}^{p}\nabla_kH\cdot \vec{n}\ddd{\Gamma}.
% \end{align*}
Using \cref{proposition: ScatterBound} with $\MV=\MW|\MW|^{p-2}$, we further obtain that
% By \cref{lemma:ScatterEstimate} scattering term can be rewritten as
\begin{align}\label{eq:dissipativity}
    \int_{\bbD_b}\Tr\big[\big(S(\MW)-\Sigma\MW\big)\MW|\MW|^{p-2}\big]\dx\dk \leq 0.
\end{align}
By applying Hölder's inequality, the integral over the source term can be written as
\begin{align*}
    \int_{\bbD_b}\Tr\big[\MF\MW|\MW|^{p-2}\big]\dx\dk \leq \int_{\bbD_b}\|\MF\|_{S^p}\|\MW\|_{S^p}^{p-1}\dx\dk\leq \|\MF\|_{L^p(\bbD_b,\HM)}\, \|\MW\|_{L^p(\bbD_b,\HM)}^{p-1}.
\end{align*}
Combining these inequalities and integrating over time then yields the assertion.
% gives
% \begin{align*}
%     \|\MW(t)\|^p_{L^p(\bbD_b,\HM)} +\int_0^t\int_{\Gamma_+}\|\MW(t)\|_{S^p}^{p}\nabla_kH\cdot \vec{n}\ddd{\Gamma}\leq \|\MW(0)\|^p_{L^p(\bbD_b,\HM)}\\ +\int_0^t\int_{\Gamma_-}\|\MG(t)\|_{S^p}^{p}|\nabla_kH\cdot \vec{n}|\ddd{\Gamma}+\int_{0}^t\|\MF\|_{L^p(\bbD,\HM)}\cdot \|\MW(s)\|_{L^p(\bbD_b,\HM)}^{p-1}\ddd{s}.
% \end{align*}
\end{proof}
\begin{remark}
 Inequality \cref{eq:dissipativity} shows that $S-\Sigma$ defines a dissipative operator on $L^p(\bbD_b,\HM)$. In view of \cref{proposition: ScatterBound}, a corresponding dissipativity result for $S-\Sigma$ holds true on $L^p(\bbD,\HM)$, i.e., for unbounded spatial domains. Inspecting the proof of \cref{thrm: NormNoScatter}, we also have shown that $S-\Sigma+N$ is dissipative.
\end{remark}
% \begin{remark}
% In the case of an unbounded domain $\bbD$ or a bounded domain with homogeneous inflow boundary conditions and in the absence of sources \cref{thrm: NormEstimation} shows that the solution norm is non-increasing.
% \end{remark}
% From the proof of \cref{thrm: NormEstimation} and assuming that $\MF$ is negative and $\MW$ is positive, we see that the term involving $\MF$ ca be dropped. This follows from the fact that, in this case, $\Tr\big[\MF\MW|\MW|^{p-2}\big]\leq 0$. In the next section we show that $\MW$ is positive in time if the data is.

% {\color{red} This is now a bit confusing becuase Theorem 6.4 requires positive $\MF$... What can be seen: If $\MF$ is anti-symmetric, dissipation will hold, too. (also a suboptimal case.)}

\subsection{Positivity of the coherence matrix}
To show positivity of the solution to problem \cref{eq:MatrixProblemNonHomogeneous1}--\cref{eq:MatrixProblemNonHomogeneous3} we first prove the following auxiliary result needed to deal with nonhomogeneous boundary conditions.
\begin{lemma}
\label{lemma: VNonHom}
Let $\MG$ satisfy the assumptions in \cref{thrm:WellPosednesBounded}, and additionally suppose that $\MG\succeq 0$. Then the problem
\begin{alignat}{6}
    \frac{\partial \MV}{\partial t} +[H,\MV]-N(\MV) + \Sigma \MV&= 0 &\quad& \text{on }\bbD_b\times(0,T), \label{eq:VNonHom1}\\
    \MV &=  \MG& &\text{on }\Gamma_-\times(0,T),\label{eq:VNonHom2}\\
    \MV(0) &= 0 &&\text{on }\bbD_b,\label{eq:VNonHom3}
\end{alignat}
has a unique positive solution $\MV\in C^0([0,T],L^p(\bbD_b,\HM))\cap C^1((0,T),W^p(\bbD_b,\HM))$.
\end{lemma}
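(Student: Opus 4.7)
The plan is to split the argument into well-posedness, which closely mimics the proof of \cref{thrm:WellPosednesBounded}, and pointwise positivity, which is the main point. For existence and uniqueness, I take the proof of \cref{thrm:WellPosednesBounded} verbatim in the degenerate case $S=0$, $\MF=0$, $\MW_0=0$. An entrywise extension of \cref{lem:existence_nonhom_bc} produces a matrix-valued lifting $\widetilde\ME\MG\in C^0([0,T],W^p(\bbD_b,\HM))\cap C^1([0,T],L^p(\bbD_b,\HM))$ solving $\partial_t(\widetilde\ME\MG)+[H,\widetilde\ME\MG]=0$ with $\widetilde\ME\MG|_{\Gamma_-}=\MG$ and $(\widetilde\ME\MG)(0)=0$. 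The remainder $\MU\colonequals \MV-\widetilde\ME\MG$ then satisfies a homogeneous boundary problem with zero initial data and bounded source $N(\widetilde\ME\MG)-\Sigma\widetilde\ME\MG\in C^1((0,T),L^p(\bbD_b,\HM))$, and since $N-\Sigma$ is a bounded perturbation of $\MA$ on $L^p(\bbD_b,\HM)$ by \cref{lemma:coupling} and the essential boundedness of $\Sigma$, standard semigroup theory yields the unique classical $\MU$, hence the unique classical $\MV$ of the stated regularity.

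For positivity, I follow the strategy hinted at in the introduction, reducing to scalar Liouville equations for the trace and determinant of $\MV$. A Hermitian $2\times 2$ matrix is positive semidefinite if and only if both its trace and its determinant are nonnegative. Writing $\MV$ in Stokes parameters $(I_s,Q_s,U_s,V_s)$ via \cref{eq:Stokes}, so that $\Tr\MV=I_s$ and $4\det\MV=I_s^2-Q_s^2-U_s^2-V_s^2$, the matrix equation \cref{eq:VNonHom1} decomposes into the system \cref{eq:evo_I}--\cref{eq:evo_V} supplemented by an additional $-\Sigma$ absorption term on each component. Using the product rule for the Poisson bracket together with the cancellation $2Q_s(2nU_s)+2U_s(-2nQ_s)=0$, which reflects the norm-preserving character of $N$, a direct computation gives
\begin{align*}
\partial_t(\Tr\MV)+[H,\Tr\MV]+\Sigma\,\Tr\MV=0, \qquad \partial_t(\det\MV)+[H,\det\MV]+2\Sigma\det\MV=0,
\end{align*}
each with zero initial data and with nonnegative boundary data $\Tr\MG\ge 0$ and $\det\MG\ge 0$, respectively (since $\MG\succeq 0$).

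Both are scalar inhomogeneous Liouville equations with bounded absorption. Along each characteristic $s\mapsto\phi_s(x_0,k_0)$ they reduce to the ODE $\frac{d}{ds}\tilde u+c\Sigma\tilde u=0$ with $c\in\{1,2\}$, whose unique solution is the nonnegative boundary or initial value multiplied by a positive exponential decay factor; hence $\Tr\MV\ge 0$ and $\det\MV\ge 0$ almost everywhere, which gives $\MV\succeq 0$. I expect the main obstacle to be the justification of the equation for $\det\MV$ at the function-space level, since $\det\MV$ is a quadratic function of the Stokes parameters and hence, for general $p<\infty$, only enjoys reduced integrability. I would bypass this either by working pointwise along characteristics (which avoids the function-space subtleties entirely), or by first proving positivity for a smoother class of $\MG$ making the Stokes parameters regular enough to apply the scalar theory of \cref{thrm: SolutionNonHomogeneous} directly, and then passing to the limit using continuity of the solution map established in the first part.
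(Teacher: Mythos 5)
Your proposal is correct and follows essentially the same route as the paper: well-posedness via the bounded-perturbation/lifting machinery of \cref{thrm:WellPosednesBounded} (with $S=0$), and positivity by reducing to scalar damped Liouville equations for $\Tr\MV$ and $\det\MV$, solved along characteristics with a positive exponential factor so that nonnegative boundary data propagates. The only cosmetic difference is that you derive the determinant equation via the Stokes-parameter cancellation $2Q(2nU)+2U(-2nQ)=0$, whereas the paper uses Jacobi's formula with $\mathrm{adj}(\MV)=J^*\MV J$ and $\Tr[N(\MV)]=0$ for the trace --- equivalent computations; your remark on the reduced integrability of $\det\MV$ for general $p$ is a fair point the paper glosses over, and your proposed fixes are sound.
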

\begin{proof}
The existence of a unique solution to problem \cref{eq:VNonHom1}--\cref{eq:VNonHom3} is guaranteed by \cref{thrm:WellPosednesBounded}. To show positivity of $\MV$, it suffices to show positivity of its trace and determinant.
The time evolution of the trace $t_\MV$ of $\MV$ is obtained by taking the trace of \cref{eq:VNonHom1}--\cref{eq:VNonHom3} and using that $\Tr\big[N(\MV)\big]=0$,
\begin{alignat*}{6}
   \frac{\partial}{\partial t}t_\MV +[H,t_\MV] &=  - \Sigma t_\MV &\quad& \text{on }\bbD_b\times(0,T), \\
     t_\MV &=  t_{\MG}& &\text{on }\Gamma_-\times(0,T),\\
    t_\MV(0) &= 0 &&\text{on }\bbD_b,
\end{alignat*}
where $t_{\MG}\geq 0$ denotes the trace of ${\MG}$.
% \begin{align*}
%     \begin{cases}
%     \frac{\partial}{\partial t}\Tr\big[\MV\big] = \Tr\big[\nabla_x H\cdot\nabla_k \MV - \nabla_kH\cdot\nabla_x \MV\big]\\
%     \hphantom{\frac{\partial}{\partial t}\Tr\big[\MV\big]}- \Tr\big[N(\MV)\big] - \Sigma\Tr\big[ \MV\big], & \text{on } \bbD_b\times (0,T],\\
%     \Tr\big[\MV(x,k,t)\big] = \Tr\big[\MG(x,k,t)\big], & \text{on } \Gamma_-\times(0,T],\\
%     \Tr\big[\MV(x,k,0)\big] = 0, & \text{on } \bbD_b.
% \end{cases}
% \end{align*}
% By writing $\MV$ in terms of the Stokes parameters as
% \begin{align*}
%     \MV = \frac{1}{2}\begin{pmatrix}
%     I_V + Q_V & U_V + iV_V\\
%     U_V - iV_V & I_V - Q_V
%     \end{pmatrix},
% \end{align*}
% and by applying \cref{prop:CoupledFunction} this problem can be written as
% \begin{align*}
% \begin{cases}
%     \frac{\partial I_V}{\partial t} = [I_V,H]- \Sigma I_V, & \text{on } \bbD_b\times (0,T],\\
%     I_V(x,k,t) = I_{\tilde{V}}(x,k,t), & \text{on } \Gamma_-\times(0,T],\\
%     I_V(x,k,0) = 0, & \text{on } \bbD_b,
% \end{cases}
% \end{align*}
Reasoning as in \cref{lem:existence_nonhom_bc}, we hence obtain
\begin{align*}
    t_\MV(x,k,t) =  E(x,k) \widetilde\ME t_{\MG}(x,k,t),
    %t_{\widetilde\MV}(\phi_{-\tau_-(x,k)}(x,k),t-\tau_-(x,k))Y(t-\tau_-(x,k)),
\end{align*}
with $E(x,k)=\exp\left(-\int_0^{\tau_-(x,k)}\Sigma(\phi_{-s}(x,k))\ddd{s}\right)$,
which shows $t_\MV(x,k,t)\geq 0$.

Positivity of the determinant of $\MV$ can be shown in a similar way, as explained next. Recall Jacobi's formula,
\begin{align*}
    \frac{\partial}{\partial t}\det(\MV) = \Tr\Big[\text{adj}(\MV)\frac{
    \partial \MV}{\partial t}\Big],
\end{align*}
with adjunct matrix $\text{adj}(\MV)=J^*\MV J$.
% denoting the adjunct matrix given by 
% \begin{align*}
%     \text{adj}(\MV) = \frac{1}{2}\begin{pmatrix}
%     I_V-Q_V & -U_V -iV_V\\
%     -U_V + iV_V & I_V + Q_V
%     \end{pmatrix}.
% \end{align*}
By multiplying \cref{eq:VNonHom1} with ${\rm adj}(\MV)$, taking traces, and using Jacobi's formula, the problem \cref{eq:VNonHom1}--\cref{eq:VNonHom3} is transformed
% and using \cref{eq:det_Stokes} and \cref{prop:CoupledFunction}, 
to the following boundary value problem for the determinant $d_\MV$ of $\MV$,
\begin{alignat*}{6}
    \frac{\partial}{\partial t}d_\MV + [H,d_\MV]&=-2\Sigma d_\MV &\quad&\text{on }\bbD_b\times(0,T),\\
    d_\MV &= d_{\MG}& &\text{on }\Gamma_-\times(0,T),\\
    d_\MV(0) &= 0 & &\text{on }\bbD_b,
\end{alignat*}
where $d_{\MG}\geq 0$ denotes the determinant of $\MG$.
With similar arguments used to discuss $t_\MV$, we conclude that $d_\MV(x,k,t)\geq 0$.
\end{proof}
We are now in the position to show positivity of the coherence matrix $\MW$, which involves also the discussion of the scattering term $S(\MW)$. 
\begin{theorem}
Let $\MF,\MW_0$ and $\MG$ satisfy the assumption made in \cref{thrm:WellPosednesBounded} and suppose that $\MF, \MW_0$ and $\MG$ are positive. Furthermore, denote by $\MW$ the solution to problem \cref{eq:MatrixProblemNonHomogeneous1}--\cref{eq:MatrixProblemNonHomogeneous3}. Then $\MW(t)$ is positive for all $t\geq 0$.
\end{theorem}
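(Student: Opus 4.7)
The plan is to split $\MW$ into two parts that handle the boundary data and the interior data separately, combining the positivity result \cref{lemma: VNonHom} for the lifting with the Trotter product formula argument used in \cref{thrm: ExistenceFull}. Note that the simpler splitting employed in the proof of \cref{thrm:WellPosednesBounded} is insufficient here, because the corresponding source term contains $N(\MV)+\Sigma\MV-S(\MV)$, which need not be positive; this is precisely the reason \cref{lemma: VNonHom} was formulated with $N$ and $\Sigma$ already absorbed into $\MV$.

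First, I would set $\MW=\MU+\MV$, where $\MV$ is the positive solution to \cref{eq:VNonHom1}--\cref{eq:VNonHom3} with boundary data $\MG\succeq 0$ provided by \cref{lemma: VNonHom}. Subtracting the two evolution equations, $\MU$ solves the homogeneous-inflow problem
\begin{equation*}
\frac{\partial \MU}{\partial t} + [H,\MU] - N(\MU) + \Sigma \MU = S(\MU) + \widetilde{\MF}, \qquad \MU|_{\Gamma_-} = 0, \qquad \MU(0) = \MW_0,
\end{equation*}
with effective source $\widetilde{\MF} \colonequals \MF + S(\MV)$. Since $\MV\succeq 0$ and the scattering operator preserves positivity by \cref{lemma:ScatterPositivity}, we have $\widetilde{\MF}\succeq 0$; moreover $\widetilde{\MF}\in C^1((0,T),L^p(\bbD_b,\HM))$ by boundedness of $S$ on $L^p(\bbD_b,\HM)$ and the regularity of $\MV$.

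Second, I would show $\MU\succeq 0$ by adapting the Trotter-product argument from the proof of \cref{thrm: ExistenceFull} to the bounded-domain generator $\MA$ of \cref{prop:semigroupbounded}, extended componentwise to $\HM$-valued maps. Writing $\{L(t)\}_{t\geq 0}$ for the semigroup generated by $\MA+N+S-\Sigma$ on $L^p(\bbD_b,\HM)$, Trotter's formula gives
\begin{equation*}
L(t)\MU_0 = \lim_{m\to\infty}\Bigl(e^{\tfrac{t}{m}\MA}\, e^{\tfrac{t}{m}N}\, e^{\tfrac{t}{m}S}\, e^{-\tfrac{t}{m}\Sigma}\Bigr)^m \MU_0,
\end{equation*}
and it suffices to verify positivity preservation of each factor: $e^{t\MA}$ acts by composition with the flow multiplied by a scalar nonnegative Heaviside cutoff via \cref{eq:semigroupbounded}, so pointwise matrix positivity is preserved; $e^{-t\Sigma}$ is pointwise scalar multiplication by $e^{-t\Sigma(x,k)}>0$; $e^{tS}$ is positive by the power series together with \cref{lemma:ScatterPositivity}; and $e^{tN}$ preserves eigenvalues (and hence positivity) via the explicit rotation representation derived in the proof of \cref{thrm: ExistenceFull}. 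Therefore $L(t)$ preserves positivity. Applying the variation of constants formula
\begin{equation*}
\MU(t) = L(t)\MW_0 + \int_0^t L(t-s)\widetilde{\MF}(s)\,\ddd{s},
\end{equation*}
together with $\MW_0\succeq 0$ and $\widetilde{\MF}(s)\succeq 0$, yields $\MU(t)\succeq 0$, whence $\MW(t)=\MU(t)+\MV(t)\succeq 0$ for all $t\in[0,T]$.

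The main obstacle, in my view, is the rigorous justification that Trotter's formula applies in this bounded-domain setting, i.e., that the homogeneous-inflow condition characterizing $D(\MA)$ remains compatible with the addition of the bounded perturbations $N$ and $S-\Sigma$ in the sense required by the semigroup product formula; this is essentially the same verification used in \cref{thrm: ExistenceFull} but carried out over the bounded generator \cref{prop:semigroupbounded}, and requires only that we track the matrix-valued extension carefully.
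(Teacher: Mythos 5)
Your proposal is correct and follows essentially the same route as the paper: the decomposition $\MW=\MU+\MV$ with the lifting $\MV$ from \cref{lemma: VNonHom}, the positive effective source $\widetilde{\MF}=\MF+S(\MV)$ via \cref{lemma:ScatterPositivity}, and the Trotter-product positivity argument transferred from \cref{thrm: ExistenceFull} to the bounded-domain generator. Your added remarks—why the simpler splitting from \cref{thrm:WellPosednesBounded} fails due to the sign of $N(\MV)+\Sigma\MV-S(\MV)$, and the explicit positivity check of each Trotter factor including the Heaviside cutoff in \cref{eq:semigroupbounded}—only make explicit what the paper leaves implicit.
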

\begin{proof}
Denote $\MV$ the positive solution to \cref{eq:VNonHom1}--\cref{eq:VNonHom3} constructed in \cref{lemma: VNonHom}.
The function $\MU=\MW-\MV$ then satisfies
\begin{alignat*}{6}
    \frac{\partial \MU}{\partial t} +[H,\MU]-N(\MU) + \Sigma \MU&= S(\MU)+\widetilde{\MF} &\quad& \text{on }\bbD_b\times(0,T),\\
    \MU &= 0 &&\text{on }\Gamma_-\times(0,T),\\
    \MU(0) &= \MW_0 &&\text{on }\bbD_b,
\end{alignat*}
with $\widetilde{\MF} = \MF + S(\MV)$. Since $\MF$ is positive and $S(\MV)$ is positive by \cref{lemma:ScatterPositivity}, we have that $\widetilde{\MF}$ is positive. By using Trotter's formula as in the proof of \cref{thrm: ExistenceFull}, it follows that $\MU$ is positive for $\MW_0\succeq 0$. Thus, $\MW=\MU+\MV$ is also positive.
\end{proof}
\section{Conclusions}
\label{section: conclusion}
In this paper we have studied the properties of the general radiative transfer equation with polarization and varying refractive index. We established well-posedness under mild assumptions on the parameters via semigroup theory. In order to be able to handle bounded domains, which is the typical case considered in applications, we identified suitable trace spaces, and proved well-posedness of the radiative transfer equation on bounded spatial domains. Moreover, we have shown that the solution satisfies certain physical properties. For instance, the coherence matrix is Hermitian and positive pointwise in phase-space and for all times if the initial and boundary conditions are.
Moreover, for unbounded domains we showed that energy is dissipated only through interaction with the background medium, i.e., scattering; while for bounded domains, energy is additionally dissipated over the outflow boundary.

% {\color{red} Mention? Covering by characterstics is open. For stationary case, that is needed.}

\section*{Acknowledgments}
VB and MS acknowledge support by the Dutch Research council (NWO) via the Mathematics Clusters grant no. 613.009.133.
\bibliographystyle{siamplain}
\bibliography{bib}
\end{document}